\begin{document}

\theoremstyle{plain}
  \newtheorem{theorem}{Theorem}[section]
  \newtheorem{proposition}[theorem]{Proposition}
  \newtheorem{lemma}[theorem]{Lemma}
  \newtheorem{corollary}[theorem]{Corollary}
  \newtheorem{conjecture}[theorem]{Conjecture}
\theoremstyle{definition}
  \newtheorem{definition}[theorem]{Definition}
  \newtheorem{example}[theorem]{Example}
  \newtheorem{observation}[theorem]{Observation}
  \newtheorem{observations}[theorem]{Observations}
  \newtheorem{question}[theorem]{Question}
 \theoremstyle{remark}
  \newtheorem{remark}[theorem]{Remark}

\numberwithin{equation}{section}

\def\ZZ{{\mathbb Z}}
\def\QQ{{\mathbb Q}}
\def\CC{{\mathbb C}}
\def\NN{{\mathbb N}}

\def\Kbar{{\overline{K}}}

\def\Y{{\mathbf Y}}
\def\SY{{S\mathbf Y}}
\def\YF{{\mathbf Y F}}
\def\il{{\text{Smith-eigenvalue}}}
\def\pd{{\frac{\partial}{\partial p_1}}}
\def\K{{\mathbb C}}
\def\E{{\mathcal E}}
\def\A{{\mathscr A}}
\def\P{{\mathbf P}}
\def\Q{{\mathbf Q}}

\def\symm{\mathfrak{S}}

\def\coker{\mathrm{coker}}
\def\im{\mathrm{im}}
\def\diag{\mathrm{diag}}
\def\rank{\mathrm{rank}}
\def\Res{\mathrm{Res}}
\def\Ind{\mathrm{Ind}}
\def\triv{{\mathbf{1}}}
\def\Irr{\mathrm{Irr}}
\def\ones{{\mathrm{ones}}}

\def\Class{{\mathbf{Cl}}}

\title{Differential posets and Smith normal forms}
\author{Alexander Miller}
\email{mill1966@math.umn.edu}
\author{Victor Reiner}
\email{reiner@math.umn.edu}
\address{ School of Mathematics\\
University of Minnesota\\
Minneapolis, MN 55455}

\begin{abstract}  
We conjecture a strong property for the up and down maps $U$ and $D$
in an $r$-differential poset: $DU+tI$ and $UD+tI$ have Smith normal forms over $\ZZ[t]$.   
In particular, this would determine the integral structure of the maps $U,D,UD,DU$, 
including their ranks in any characteristic.

As evidence, we prove the conjecture for the
Young-Fibonacci lattice $\YF$ studied by Okada and its $r$-differential generalizations
$Z(r)$, as well as verifying many of its consequences for Young's lattice $\Y$
and the $r$-differential Cartesian products $Y^r$.
\end{abstract}

\thanks{
The authors thank Janvier Nzeutchap, John Shareshian, and Richard Stanley for helpful conversations,
and they particularly thank Patrick Byrnes for helpful computations as well as conversations.
}

\keywords{invariant factors, Smith normal form, differential poset, dual graded graphs}

\maketitle


\section{Introduction and the main conjecture}
\label{sec:intro}

The focus of this paper is Conjecture~\ref{conj:parametrized-Smith} below.
We provide here some background and motivation before stating it.

For a positive integer $r$, an {\it $r$-differential poset} is a graded poset $P$ with a minimum element,
having all intervals and all rank cardinalities finite, satisfying two crucial extra conditions:  
\begin{enumerate}
\item[(D1)] If an element of $P$ covers $m$ others, then it
will be covered by $m+r$ others.
\item[(D2)] If two elements of $P$ have exactly $m$ elements
that they both cover, then there will be exactly $m$ elements
that cover them both.
\end{enumerate}
Favorite examples include the $1$-differential
poset {\it Young's lattice} $\Y$ of all number partitions ordered by inclusion of their
Ferrers diagrams, the $r$-differential Cartesian product order $Y^r$, and the
Fibonacci posets $Z(r)$ discussed in  below.  See Stanley \cite{S1, S2}
for more on differential posets, and Fomin \cite{Fomin1, Fomin2} 
for the more general notion of {\it dual graded graphs}.

There is an important rephrasing of the properties (D1, D2) above.
Let $P_n$ denote the $n^{th}$ rank of $P$, say with rank size $p_n:=|P_n|$.
For any ring $R$, consider the free $R$-module $R^{p_n}$ having basis indexed by
the elements of $P_n$, and define the {\it up} and {\it down} maps
$$
\begin{aligned}
U_n (=D_{n+1}^t): R^{p_n} & \rightarrow R^{p_{n+1}} \\
D_n (=U_{n-1}^t): R^{p_n} & \rightarrow R^{p_{n-1}} 
\end{aligned}
$$
in which a basis element is sent by $U_n$ (resp. $D_n$)
to the sum, with coefficient $1$, of all elements that cover (resp. are covered by) it.
Using the abbreviated notations
$$
\begin{aligned}
UD_n &:=U_{n-1}D_n \\
DU_n &:=D_{n+1}U_n
\end{aligned}
$$
the conditions (D1, D2) can be rephrased as saying that, for each $n \geq 1$,
$$
DU_n-UD_n=rI
$$
or equivalently, that
\begin{equation}
\label{r-differential-relation}
DU_n=UD_n+rI.
\end{equation}

This turns out to imply that, working over rings $R$ of characteristic zero,
$U_n, D_n$ are of maximal rank, via a simple calculation of the
eigenvalues and multiplicities for the nonnegative definite operators $DU_n, UD_n$;
see Proposition~\ref{eig} below.  The main conjecture of this paper,
Conjecture~\ref{conj:parametrized-Smith} below, gives further information
about the maps  $DU_n, UD_n$ (as well as $U_n, D_n$ themselves)
which is independent of the ring $R$.

For a ring $R$ and an $m \times n$ matrix $A$ in $R^{m \times n}$
say that $A$ 
\begin{enumerate}
\item[$\bullet$] {\it is in Smith form}
if it is diagonal in the sense that $a_{ij} = 0$ for
$i \neq j$, and its diagonal entries $s_i:=a_{ii}$
have $s_i$ dividing $s_{i+1}$ in $R$ for each $i$.
\item[$\bullet$]
{\it has a Smith form over $R$}
if it can be brought into Smith form by change-of-bases in the
free modules $R^m, R^n$, that is, if there exist matrices
$P$ in $GL_m(R)$ and $Q$ in $GL_n(R)$ for which $PAQ$ is in Smith form.
\item[$\bullet$]
{\it has a parametrized Smith form for $A+tI$}
if $A$ is square and the matrix $A+tI$ in $R[t]^{n \times n}$ has a Smith form over $R[t]$.
\end{enumerate}

We will mainly be interested in the case where $R=\ZZ$.  In this situation,
since $\ZZ$ is a principal ideal domain (PID), having a Smith form for a matrix $A$ over $R$ 
is guaranteed, but most square matrices $A$ over $R$ will {\it not} have a 
parametrized Smith form for $A+tI$, as $\ZZ[t]$ is not a PID.  
Having such a parametrized Smith form leads to very strong consequences, including a prediction
for the Smith forms over $\ZZ$ for all specializations $A+kI$;  see the Appendix
(Section~\ref{sec:Smith-significance}) below.

\begin{conjecture}
\label{conj:parametrized-Smith}
In an $r$-differential poset, for every $n \geq 0$,
$DU_n$ in $\ZZ^{p_n \times p_n}$ has a parametrized Smith form for $DU_n+tI$
over $\ZZ[t]$.
\end{conjecture}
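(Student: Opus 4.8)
The plan is to reduce the conjecture to a family of statements over the prime fields and then induct on $n$. First, by Proposition~\ref{eig} (equivalently, by the decomposition $\QQ^{p_n}=\bigoplus_{k\ge 0}U^k(\ker D_{n-k})$, on which $DU_n$ acts as the scalar $(k+1)r$), the matrix $DU_n$ has eigenvalues $r,2r,\dots,(n+1)r$ with multiplicities $m_j:=p_{n+1-j}-p_{n-j}$, so over the PID $\QQ[t]$ the invariant factors of $DU_n+tI$ are the monic products $g_i(t):=\prod_{j:\;m_j\ge p_n-i+1}(t+jr)$ for $i=1,\dots,p_n$. Since each $g_1\cdots g_i$ is monic, hence primitive, a Gauss's-lemma argument shows that \emph{if} $DU_n+tI$ has any Smith form over $\ZZ[t]$, then its diagonal entries are forced to be these $g_i$ up to sign; thus the conjecture is equivalent to the assertion that every determinantal-divisor ideal $\mathfrak{d}_i$ of $DU_n+tI$ in $\ZZ[t]$ is principal.

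\textbf{Step 2: localize at each prime.} Write $\mathfrak{d}_i=(g_1\cdots g_i)\,\mathfrak{c}_i$; this is a genuine factorization of ideals in $\ZZ[t]$ because $g_1\cdots g_i$ is monic and the extension of $\mathfrak{d}_i$ to $\QQ[t]$ is the principal ideal $(g_1\cdots g_i)$. The complementary ideal $\mathfrak{c}_i$ is then supported only on primes lying over some prime $p\in\ZZ$; since every maximal ideal of $\ZZ[t]$ has the shape $(p,g(t))$, it follows that $\mathfrak{d}_i$ is principal for all $i$ if and only if for every prime $p$ one has $\mathfrak{d}_i\bmod p=(\overline{g_1\cdots g_i})$ in $\mathbb{F}_p[t]$. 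As reduction mod $p$ commutes with forming ideals of $i\times i$ minors, the conjecture is thereby reduced to: \emph{for every prime $p$, the Smith form of $DU_n+tI$ over the PID $\mathbb{F}_p[t]$ equals $\diag(\overline{g_1},\dots,\overline{g_{p_n}})$, the reduction mod $p$ of its Smith form over $\QQ[t]$.} Equivalently, the ranks over $\mathbb{F}_p$ of $(DU_n-cI)^{\ell}$, for $c\in\mathbb{F}_p$ and $\ell\ge1$, must be the ``generic'' values dictated by the rational spectrum: when two of the eigenvalues $jr$ collide mod $p$ their generalized eigenspaces must fuse into a single string of Jordan blocks, but nothing more degenerate may occur ($DU_n\bmod p$ need not be semisimple).

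\textbf{Step 3: induct on $n$.} The handle is the relation \eqref{r-differential-relation}: since $DU_n=UD_n+rI$ with $UD_n=U_{n-1}D_n$, while $DU_{n-1}=D_nU_{n-1}$, conjugation by $\left(\begin{smallmatrix}I&U_{n-1}\\0&I\end{smallmatrix}\right)$ gives, over $\ZZ$ and hence over $\ZZ[t]$,
\[
\begin{pmatrix}DU_n+tI & 0\\ D_n & (t+r)I\end{pmatrix}
\;\sim\;
\begin{pmatrix}(t+r)I & 0\\ D_n & DU_{n-1}+(t+r)I\end{pmatrix},
\]
with the identity matrices of sizes $p_n$ and $p_{n-1}$ as appropriate. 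This block identity transports the (inductively known) $\mathbb{F}_p[t]$-module structure of $DU_{n-1}+(t+r)I$ across the down map $D_n$ onto $DU_n+tI$; for instance, if $D_n$ is surjective over $\ZZ$, one may normalize $D_nQ=(I\mid0)$ with a fixed $Q\in GL_{p_n}(\ZZ)$ and then read off the Smith form of $DU_n+tI$ from that of $DU_{n-1}+(t+r)I$ together with a factor $(t+r)^{\,p_n-p_{n-1}}$ accounting for the new rank. That the resulting change of basis genuinely involves $t$ is unavoidable: already $DU_1$ is typically not integrally diagonalizable, yet $DU_1+tI$ does acquire a Smith form over $\ZZ[t]$.

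\textbf{The main obstacle} is carrying out Step 3 uniformly. The commutation relation by itself does not pin down how $\ker(D_n\bmod p)$ sits relative to $\im(U_{n-1}\bmod p)$ — small abstract operator pairs obeying the same relation already behave differently — so one must feed in the combinatorics of covering relations, with condition (D2) describing $\ker D_n$ via pairs of elements of $P_n$ sharing a common lower cover. Finding such control valid for every $r$-differential poset is, I expect, exactly what keeps Conjecture~\ref{conj:parametrized-Smith} open; in the self-similar families $Z(r)$ (and, for many of its consequences, $\Y$ and $Y^r$) the needed bases can instead be written down by hand from the poset's recursive structure, yielding explicit block formulas for diagonalizing matrices $P_n(t),Q_n(t)\in GL_{p_n}(\ZZ[t])$ — which is what the remainder of the paper does.
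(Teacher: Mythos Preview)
The statement is the paper's main \emph{conjecture}; the paper does not prove it in general (only for the Fibonacci posets $Z(r)$, by explicit row and column operations in Theorem~\ref{thm:Fibonacci}). So there is no paper's proof to compare against, and your proposal should be read as a strategy sketch rather than a proof. Your Steps~1 and~2 are correct and are clean structural observations that the paper does not make explicit: Step~1 recovers exactly \eqref{explicit-Smith-entries} via Proposition~\ref{prop:smith-eigenvalue-relation}, and the localization in Step~2 is valid because every maximal ideal of $\ZZ[t]$ contains a rational prime, so the ``content'' ideal $\mathfrak c_i$ is the unit ideal iff its reduction mod $p$ is, for every $p$. The reformulation---that the conjecture is equivalent to the Jordan type of $DU_n$ over every $\mathbb F_p$ being as nondegenerate as the eigenvalue collisions force---is genuinely useful.

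The gap is in Step~3. Your block identity is correct (it is conjugation by $\left(\begin{smallmatrix}I&U_{n-1}\\0&I\end{smallmatrix}\right)$, using $DU_n-UD_n=rI$), but the claim that one can then ``read off the Smith form of $DU_n+tI$'' does not follow, even granting that $D_n$ is surjective over $\ZZ$---which is itself the open Conjecture~\ref{conj:free-cokernel}. Writing $D_nQ=(I\mid 0)$ with $Q=(Q_1\mid Q_2)\in GL_{p_n}(\ZZ)$ and clearing against the identity block, one finds that both sides of your displayed equivalence reduce to $I_{p_{n-1}}\oplus M$, where on the right $M\sim(t+r)\bigl(I_{p_n-p_{n-1}}\oplus(DU_{n-1}+(t+r)I)\bigr)$, but on the left $M$ equals $(DU_n+tI)\cdot V\cdot\diag(I_{p_n-p_{n-1}},(t+r)I_{p_{n-1}})$ for some $V\in GL_{p_n}(\ZZ)$. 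Knowing the Smith form of this \emph{product} over $\ZZ[t]$ (or over $\mathbb F_p[t]$) does not determine the Smith form of the factor $DU_n+tI$: the right factor $\diag(I,(t+r)I)$ is not a unit, and cancellation of Smith entries across a product fails in general. So the block identity, while suggestive, does not set up the induction you want. You essentially concede this in your final paragraph; but then the proposal is a reformulation (Steps~1--2) plus a heuristic, not a proof. The paper's positive results for $Z(r)$ sidestep the issue entirely by exploiting the recursive structure \eqref{recursive-up-down-descriptions} specific to Wagner's construction to write down explicit change-of-basis matrices in $GL(\ZZ[t])$.
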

\noindent

Note that due to equation \eqref{r-differential-relation},
it would be equivalent to conjecture that
$UD_n$ when $n \geq 1$ has a parametrized Smith form for $UD_n+tI$, 
as the ring automorphism of $\ZZ[t]$ that maps $t \mapsto t+r$ 
would convert one Smith form to the other.

The paper is structured as follows.

Section~\ref{sec:two-related-conjectures} discusses two conjectures (Conjectures~\ref{conj:numerical}
and \ref{conj:free-cokernel}) on $r$-differential 
posets that are closely related to Conjecture~\ref{conj:parametrized-Smith}.

Section~\ref{sec:computer-tests} discusses some computer experiments checking
consequences of Conjecture~\ref{conj:parametrized-Smith}.

Section~\ref{sec:constructions} discusses two standard constructions for
producing $r$-differential posets, and their behavior with respect to
the conjectures.

Section~\ref{sec:Fibonacci-section} proves all of the conjectures
for the Young-Fibonacci poset $\YF$ and its $r$-differential generalizations
$Z(r)$.

Section~\ref{sec:Young} discusses  Young's lattice $\Y$ and the
$r$-fold Cartesian products $\Y^r$.  Although we have not been able
to prove Conjecture~\ref{conj:parametrized-Smith} in these cases,
some of its predictions are verified here.

Section~\ref{sec:dual-graded-graphs}
discusses some remarks and questions regarding attempts to generalize parts of
the conjecture to the {\it dual graded graphs} of Fomin \cite{Fomin1}.

The appendix Section~\ref{sec:Smith-significance} 
discusses general facts about Smith forms over rings which
are not necessarily PID's, and the consequences of having a parametrized
Smith form for $A+tI$.

\section{Two related conjectures}
\label{sec:two-related-conjectures} 

In all of the conjectures in this section,
$P$ is an $r$-differential poset, with rank sizes $p_n$, 
up maps $U_n$ and down maps $D_{n+1}=U_n^t$ for $n\geq 0$.

\subsection{On the rank sizes}
\label{sec:numerical}

It turns out that in the presence of a certain numerical condition on the rank sizes $p_n$
(Conjecture~\ref{conj:numerical} below), the main Conjecture~\ref{conj:parametrized-Smith} 
implies and is closely related to a weaker conjecture (Conjecture~\ref{conj:free-cokernel} below).

We recall here an observation of Stanley concerning how the rank sizes $p_n$ in a differential poset 
determine the spectra of the operators $DU_n$ and $UD_n$.

\begin{proposition} \cite[\S 4]{S1}
\label{eig}  
Let $P$ be an $r$-differential poset.  Then
\[
\begin{aligned}
\det(DU_n+tI) &=\prod_{i=0}^n (t+r(i+1))^{\Delta p_{n-i}} \\
\det(UD_n+tI) &=\prod_{i=0}^n (t+ri)^{\Delta p_{n-i}}
\end{aligned}
\]
where $\Delta p_n:=p_n - p_{n-1}$.

In particular, as $DU$ has all positive integer eigenvalues, 
\begin{enumerate}
\item[$\bullet$]
$DU_n$ is nonsingular if $R$ has characteristic zero,
\item[$\bullet$]
the up maps $U_n$ are injective if $R$ has characteristic zero,
\item[$\bullet$]
the down maps $D_n$ are surjective if $R$ has characteristic zero, and
\item[$\bullet$]
the rank sizes weakly increase: $p_0 \leq p_1 \leq p_2 \leq \cdots$.
\end{enumerate}
\end{proposition}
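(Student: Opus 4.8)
The plan is to establish everything at once over a field of characteristic zero, say $\mathbb{R}$, by equipping each $V_n := \mathbb{R}^{p_n}$ with the inner product for which the given basis of $P_n$ is orthonormal, so that $D_{n+1}$ is literally $U_n^t$ and hence $DU_n = U_n^t U_n$ and $UD_n = U_{n-1}U_{n-1}^t$ are symmetric positive semidefinite; in particular each is diagonalizable with real eigenvalues $\ge 0$. Since $\det(DU_n+tI)$ and $\det(UD_n+tI)$ then factor as $\prod(t+\lambda)$ over their eigenvalues $\lambda$ with multiplicity, it suffices to show that the spectrum of $DU_n$ consists of the values $r(i+1)$ with multiplicity $\Delta p_{n-i}$ for $i=0,1,\dots,n$; all of these are $\ge r>0$, which will also yield the nonsingularity/injectivity/surjectivity assertions. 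I would argue by induction on $n$, the base case being the observation that $DU_0 = D_1U_0 = rI$ on $V_0$ (the minimum element is covered by exactly $r$ elements, each covering only the minimum), matching $\det(DU_0+tI)=(t+r)^{p_0}$ since $\Delta p_0 = p_0$.

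The two inputs for the inductive step are (a) the $r$-differential relation $DU_n = UD_n + rI$ from \eqref{r-differential-relation}, and (b) the standard linear-algebra facts that for any real matrix $A$ the products $A^tA$ and $AA^t$ share the same nonzero eigenvalues with the same multiplicities and $\rank(A^tA)=\rank(AA^t)=\rank A$. Taking $A = U_{n-1}$ in (b) identifies the nonzero part of the spectrum of $UD_n = U_{n-1}U_{n-1}^t$ with that of $DU_{n-1}=U_{n-1}^tU_{n-1}$. Thus, assuming inductively that $DU_{n-1}$ has eigenvalues $r(i+1)$ with multiplicity $\Delta p_{n-1-i}$ for $i=0,\dots,n-1$ — all positive, so $DU_{n-1}$ is nonsingular, $U_{n-1}$ is injective, and $\rank U_{n-1}=p_{n-1}\le p_n$ — I conclude that $UD_n$ has exactly those $p_{n-1}$ positive eigenvalues plus the eigenvalue $0$ with multiplicity $p_n-p_{n-1}=\Delta p_n$. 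Adding $r$ via $DU_n = UD_n+rI$ and reindexing $i\mapsto i+1$ on the shifted part (with the former $0$-eigenspace contributing the value $r=r(0+1)$ of multiplicity $\Delta p_n=\Delta p_{n-0}$) repackages the spectrum of $DU_n$ as $r(i+1)$ with multiplicity $\Delta p_{n-i}$ for $i=0,\dots,n$, as desired; a telescoping check confirms these multiplicities sum to $p_n$.

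With the spectrum of $DU_n$ in hand, the formula for $\det(UD_n+tI)$ follows either from the spectrum of $UD_n$ produced along the way or by substituting $t\mapsto t-r$ into $\det(DU_n+tI)$ through $UD_n = DU_n - rI$. The bulleted consequences are then immediate: all eigenvalues of $DU_n$ are $\ge r>0$, so $DU_n$ is nonsingular in characteristic zero; nonsingularity of $DU_n=U_n^tU_n$ forces $U_n$ injective, hence $D_{n+1}=U_n^t$ surjective; and injectivity of $U_{n-1}$ gives $p_{n-1}=\rank U_{n-1}\le p_n$, i.e.\ the rank sizes weakly increase.

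I do not anticipate a serious obstacle; the care needed is entirely bookkeeping: getting the base case right (reading $DU_0=rI$ correctly, with $D_0=0$ since $V_{-1}=0$), correctly pinning the multiplicity of the $0$-eigenvalue of $UD_n$ to $\rank U_{n-1}$ (which is why injectivity of the up maps, and thus $p_{n-1}\le p_n$, must be threaded through the induction), and performing the index shift in $\prod_i(t+r(i+1))^{\Delta p_{n-i}}$ after adding $r$. A cleaner-looking but not genuinely shorter alternative avoids inner products by using the \emph{string decomposition} $V_n=\bigoplus_{j=0}^{n}U^{\,n-j}(\ker D_j)$, on whose $j$-th summand $DU$ acts as the scalar $r(n-j+1)$; proving that this really is a direct sum with $U^{\,n-j}$ injective on $\ker D_j$ is again a direct consequence of the relation $DU-UD=rI$ and amounts to the same work.
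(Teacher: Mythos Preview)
Your argument is correct. Note, however, that the paper does not actually supply its own proof of this proposition: it is quoted from Stanley \cite[\S 4]{S1} and stated without proof. Your inductive approach via the identity $DU_n = UD_n + rI$ together with the fact that $U_{n-1}^tU_{n-1}$ and $U_{n-1}U_{n-1}^t$ share nonzero spectra is precisely the standard argument (and is essentially Stanley's), so there is nothing to contrast.
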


Since one knows the eigenvalues of $UD_n$, and they lie in $\ZZ$,
Proposition~\ref{prop:smith-eigenvalue-relation} below shows that
whenever Conjecture~\ref{conj:parametrized-Smith} holds, the Smith normal
form entries $s_1(t),\ldots,s_{p_n}(t)$ for $DU_n+tI$ take the following
explicit form in terms of the rank sizes $p_n$:
\begin{equation}
\label{explicit-Smith-entries}
s_{p_n+1-j}(t) = \prod_{\substack{i=0 \\ \Delta p_{n-i} \geq j}}^n (t+r(i+1))
\end{equation}
for $j=1,2,\ldots,p_n$, where an empty product is taken to be $1$. 
In particular, this predicts for any chosen element
$k$ in the ring $R$ the structure of the kernel or cokernel of $DU_n+kI$ or $UD_n+kI$,
and if $R$ is a field, will predict the ranks of $DU_n+kI, UD_n+kI$
in terms of the characteristic of the field.

In light of Proposition~\ref{eig}, an affirmative answer to the following question of 
Stanley on the {\it strict} increase of the $p_n$, or positivity of the first differences $\Delta p_n$,
would determine the {\it support} of the spectra of the operators $DU_n$ and $UD_n$,
and hence determine the last Smith entry $s_{p_n}(t)$ of $DU_n+tI$.

\begin{question} (Stanley \cite[\S 6, end of Problem 6.]{S1}) \rm \ \\
\label{conj:Stanley}
In an $r$-differential poset $P$, do the rank sizes $p_n$ satisfy
$$
(r=)p_1 < p_2 < p_3 < \cdots?
$$
\end{question}

We make the following conjecture on the rank sizes.
\begin{conjecture}
\label{conj:numerical}
For every $n \geq 1$ and $1 \leq j \leq n$, one has
$$
\Delta p_n \geq \Delta p_{n-j-\delta_{r,1}} 
= 
\begin{cases}
\Delta p_{n-j-1} & \text{ if }P\text{ is }1\text{-differential,}\\
\Delta p_{n-j} & \text{ if }P\text{ is }r\text{-differential for }r \geq 2.
\end{cases} 
$$
In other words, for $r \geq 2$, one has
\begin{equation}
\label{second-differences-nonnegative}
\Delta p_1 \leq \Delta p_2 \leq \cdots
\end{equation}
and for $r=1$ one has
$$
\Delta p_1, \Delta p_2, \ldots, \Delta p_{n-3}, \Delta p_{n-2} \leq \Delta p_n.
$$
\end{conjecture}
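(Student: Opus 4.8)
The plan is to treat two regimes separately. First note that Conjecture~\ref{conj:numerical} subsumes Stanley's Question~\ref{conj:Stanley}: taking $j=n$ when $r\ge2$, or $j=n-1$ when $r=1$, forces $\Delta p_n\ge\Delta p_0=1$ and hence $p_{n-1}<p_n$. So one should not expect a short proof.

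For the posets treated in this paper the rank-generating function $P(q)=\sum_{n\ge0}p_n q^n$ is known explicitly, and the conjecture reduces to a coefficient-positivity statement that can be verified directly. When $r\ge2$ the conjecture says precisely that $\Delta p_1\le\Delta p_2\le\cdots$, i.e.\ that $(1-q)^2P(q)$ has nonnegative coefficients; for $\Y^r$ one has $P(q)=\prod_{i\ge1}(1-q^i)^{-r}$, so $(1-q)^2P(q)=(1-q)^{-(r-2)}\prod_{i\ge2}(1-q^i)^{-r}$, a product of series with nonnegative coefficients when $r\ge2$. When $r=1$ the conjecture says $\Delta p_k\le\Delta p_n$ for all $k\le n-2$; for $\Y$ one has $(1-q)^2P(q)=(1-q)\prod_{i\ge2}(1-q^i)^{-1}$, whose $q^n$-coefficient is $\Delta p_n-\Delta p_{n-1}$ and which is nonnegative in degrees $\ge2$ (this is the convexity $p(n-1)+p(n+1)\ge2p(n)$ of the partition function, provable e.g.\ by incrementing the largest part of a partition into parts $\ge2$), so $\Delta p_1\le\Delta p_2\le\cdots$; combined with $\Delta p_2=1=\Delta p_0$ this yields all the inequalities $\Delta p_k\le\Delta p_n$ with $k\le n-2$. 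For $\YF$ and its $r$-differential generalizations $Z(r)$, the rank sizes satisfy a linear recursion whose explicit form (Section~\ref{sec:Fibonacci-section}) makes the conjecture a short induction. This settles every example in the paper.

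For an arbitrary $r$-differential poset I would reformulate homologically. By Proposition~\ref{eig}, $U_{m-1}$ is injective and $D_m$ is surjective over $\QQ$, so $\Delta p_m=\dim_\QQ\coker U_{m-1}=\dim_\QQ\ker D_m$, which is the multiplicity of the least eigenvalue of $UD_m$. The conjecture then asks for an injection $\ker D_{n-j-\delta_{r,1}}\hookrightarrow\ker D_n$ of $\QQ$-vector spaces. The naive construction --- lift a primitive $v\in\ker D_m$ by $U$ and project back into $\ker D_n$ --- cannot succeed, because the identity $D\,U^k=U^kD+rk\,U^{k-1}$ shows that $U$ carries the eigenspace decomposition $\QQ^{p_m}=\bigoplus_{\ell\le m}U^{m-\ell}(\ker D_\ell)$ of $UD_m$ isomorphically onto that of $UD_{m+1}$, so no summand other than the top one ever meets $\ker D_m$. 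Worse, any prescribed sequence of nonnegative integers $(\Delta p_\ell)$ is realized by \emph{some} pair of operators satisfying $DU-UD=rI$; thus relation \eqref{r-differential-relation}, which is the complete matrix-level content of (D1) and (D2), imposes no constraint at all on the numbers $\Delta p_\ell$. A proof must therefore use that the $U_n$ are genuine $0$--$1$ incidence matrices, equivalently that $UD_n=U_{n-1}U_{n-1}^t$ and $DU_n=U_n^tU_n$ are \emph{nonnegative} Gram matrices whose off-diagonal $(x,y)$-entries count common lower, resp.\ upper, covers of $x$ and $y$.

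The main obstacle is exactly to extract a monotonicity statement from this nonnegativity, with no structure theory for $r$-differential posets to fall back on. The statement I would aim for is a lemma asserting that $\coker U_{n-1}$ contains a copy of $\coker U_{n-2}$ when $r\ge2$ (and of $\coker U_{n-3}$ when $r=1$, the extra rank reflecting $\Delta p_1=r-1=0$), produced directly from the Hasse diagram: for instance, choose for each $x\in P_{n-1}$ an upper cover $\widehat x\in P_n$, use (D2) to bound the collisions $\widehat x=\widehat{x'}$, and argue that the resulting map descends to an injection on cokernels. Proving injectivity of such a combinatorial matching for all $r$-differential posets --- not merely for the structured families above --- is the step I expect to be genuinely hard; a successful argument would in particular answer Stanley's Question~\ref{conj:Stanley}.
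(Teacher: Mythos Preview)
This statement is a \emph{conjecture} in the paper; there is no general proof to compare against. The paper verifies it only for $Z(r)$ (via Proposition~\ref{prop:wagner-respects-conjectures}(i)) and for $\Y^r$ (via Proposition~\ref{prop:product-respects-numerical} together with a direct injection argument for $\Y$). Your proposal treats exactly these cases, and by essentially the same arguments: your generating-function manipulation $(1-q)^2P(q)\in\NN[[q]]$ for $\Y^r$ with $r\ge2$ is the content of Proposition~\ref{prop:product-respects-numerical} (the paper factors $(1-t)^2P(t)P'(t)=[(1-t)P(t)]\cdot[(1-t)P'(t)]$ rather than writing the explicit product, but this is cosmetic); your ``increment the largest part of a partition into parts $\ge2$'' injection for $\Y$ is verbatim the paper's proof that $\Delta p_{n-1}\le\Delta p_n$; and your ``short induction from the linear recursion'' for $Z(r)$ is precisely Proposition~\ref{prop:wagner-respects-conjectures}(i), which uses $p_{n+1}=p_{n-1}+rp_n$.

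You correctly flag that the general statement subsumes Stanley's Question~\ref{conj:Stanley} and hence should not admit a short proof. Your homological reformulation and the observation that the relation $DU-UD=rI$ alone constrains nothing about the $\Delta p_\ell$ are sensible remarks about why the problem is hard, but they are speculation about an open problem, not a proof---and the paper offers nothing to compare with there, since it leaves the general case open.
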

\noindent
Conjecture~\ref{conj:numerical} has the following useful
rephrasing, which is equivalent via Proposition~\ref{eig}:
the zero eigenvalue of $UD_n$ achieves the largest multiplicity among
all of its {\it non-unit} eigenvalues.

We remark that for many known examples 
of $r$-differential posets $P$ 
(but not all;  see Section~\ref{sec:computer-tests} below), 
the stronger condition
\eqref{second-differences-nonnegative} also holds for $r=1$.
In other words, the {\it second differences} 
$$
\Delta^2 p_n : = \Delta p_n - \Delta p_{n-1} = p_n - 2p_{n-1} + p_{n-2}
$$
are nonnegative.  In this case, one can conveniently recompile
the Smith entries of $DU_n+tI$ over $\ZZ[t]$ from \eqref{explicit-Smith-entries} 
and express their multiplicities in terms of these second differences:
\begin{center}
\begin{tabular}{c|c}
entry & multiplicity\\
\hline
\hline
$
\begin{cases}
(n+1)!_{r,t} &\text{ if }r \geq 2\\
(n+1+t) \cdot (n-1)!_{1,t} &\text{ if }r =1
\end{cases}$
             & $1$\\
\hline
$(i+1)!_{r,t}$ & $\Delta^{2}p_{n-i}$  for $0 \leq i \leq n-1$\\
\hline
$1$ & $p_{n-1}$ 
\end{tabular}
\end{center}
where
\[
\ell !_{r,t}:=(r\cdot \ell+t)(r\cdot (\ell-1)+t)\cdots(r\cdot 1+t)
\]
and where by convention we set $p_k=0$ if $k$ is negative.

\subsection{On the up and down maps themselves}

\begin{conjecture}
\label{conj:free-cokernel}
For every $n$, the matrix $U_n$ in $\ZZ^{p_{n+1} \times p_n}$ 
has only ones and zeroes in its Smith form over $\ZZ$,
that is, $\coker(U_n: \ZZ^{p_n} \rightarrow \ZZ^{p_{n+1}})$
is free abelian.
\end{conjecture}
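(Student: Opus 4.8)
The plan has two prongs: attack the conjecture directly via an explicit unimodular submatrix of $U_n$, and, as an alternative, deduce it conditionally from Conjecture~\ref{conj:parametrized-Smith}. First I would record the equivalent forms of the conclusion. Since $U_n$ has full column rank $p_n$ over $\QQ$ by Proposition~\ref{eig}, the assertion that $\coker(U_n)$ is free abelian is equivalent to each of: every invariant factor of $U_n$ is $1$; the lattice $\im(U_n)\subseteq\ZZ^{p_{n+1}}$ is saturated; $U_n$ has a left inverse over $\ZZ$; the gcd of the $p_n\times p_n$ minors of $U_n$ is $1$; $D_{n+1}=U_n^{t}$ is surjective over $\ZZ$; and $U_n$ stays injective after reduction modulo every prime. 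I would work with the minor-gcd and the mod-$p$ versions.

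For the direct prong: for each $n$, produce a $p_n\times p_n$ submatrix of the $0/1$ matrix $U_n$ with determinant $\pm 1$. This amounts to picking a system of distinct representatives $\lambda\mapsto\mu(\lambda)$, assigning to each $\lambda\in P_n$ an element $\mu(\lambda)\in P_{n+1}$ covering it, so that the resulting submatrix is triangular for some linear order on $P_n$. An SDR exists in any $r$-differential poset --- Proposition~\ref{eig} makes some maximal minor of $U_n$ nonzero, so its expansion over permutations has a nonzero term, which is a transversal of $1$'s --- so the real work is choosing the SDR and the order that make the submatrix unimodular, not merely nonsingular. For Young's lattice this succeeds: let $\mu(\lambda)$ be $\lambda$ with a single cell adjoined at the foot of its first column. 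The partitions of $n$ covered by $\mu(\lambda)$ other than $\lambda$ are precisely those obtained by deleting a corner of $\lambda$ and retaining the adjoined cell, and each is strictly smaller than $\lambda$ in lexicographic order; hence, indexing rows and columns of the submatrix by $\lambda$ and sorting lexicographically, it is unitriangular with determinant $1$, so $\coker(U_n)$ is free. For the Young--Fibonacci lattice $\YF$ and its generalizations $Z(r)$ I would mimic this with their explicit cover relations (and this is in any case subsumed by the finer results available there).

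For the conditional prong: granting Conjecture~\ref{conj:parametrized-Smith}, Proposition~\ref{prop:smith-eigenvalue-relation} determines the Smith form of $DU_n=U_n^{t}U_n$ over $\ZZ$ via \eqref{explicit-Smith-entries}, whose $p_{n-1}$ smallest invariant factors equal $1$. Cauchy--Binet gives $d_j(U_n)^2\mid d_j(U_n^{t}U_n)$ for the determinantal divisors, which already forces $s_1(U_n)=\cdots=s_{p_{n-1}}(U_n)=1$; but the remaining $\Delta p_n$ invariant factors escape this bound --- when $r$ is not squarefree, $d_{p_{n-1}+1}(DU_n)$ can equal $r$, which does not force $d_{p_{n-1}+1}(U_n)=1$ --- so one would instead need $\rank_{\mathbb F_p}(U_n)=p_n$ for every prime $p$, and here Conjecture~\ref{conj:numerical} presumably enters. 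When $p\nmid r$ the relation $DU_n=UD_n+rI$ helps: a vector $v\in\ker_{\mathbb F_p}(U_n)$ satisfies $U_{n-1}D_n v=-rv$, hence lies in $\im_{\mathbb F_p}(U_{n-1})$, and iterating gives control for small $n$; but this weakens once $n$ exceeds $p$, and the case $p\mid r$, where $U$ and $D$ commute modulo $p$ and the Heisenberg structure collapses, looks genuinely harder. Finishing this prong is where I expect to get stuck, and more broadly it is the general abstract case --- lacking any canonical ``first-column'' operation or linear order on the ranks to imitate the argument for $\Y$ --- that is the main obstacle: proving $\coker(U_n)$ free for all $r$-differential posets appears to demand essentially the full strength of Conjecture~\ref{conj:parametrized-Smith}.
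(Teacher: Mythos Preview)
The statement is a conjecture, not a theorem, and the paper does not prove it in general; it proves it for $\Y^r$ and $Z(r)$, and shows (Proposition~\ref{imp:prop}) that it follows from Conjecture~\ref{conj:parametrized-Smith} together with Conjecture~\ref{conj:numerical}. Your write-up is honest about this---you propose a plan and flag where it stalls---so the right comparison is between your partial arguments and the paper's.

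Your direct prong for $\Y$ is correct and is essentially the paper's Proposition~\ref{prop:U:Y} seen in a different basis. The paper works in the $\{h_\lambda\}$ basis, where $U_n(h_\lambda)=h_{(\lambda,1)}$ makes the assertion trivial: the matrix literally contains an identity block. Your SDR $\lambda\mapsto(\lambda,1)$ is the same map, but you stay in the $\{s_\lambda\}$ basis and extract a unitriangular minor instead. Both work; the $h_\lambda$ version is a one-line proof, while yours has the virtue of not invoking a second basis.

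Your conditional prong gets stuck for a concrete and fixable reason: you are computing with the wrong product. You analyze $DU_n=U_n^t U_n$, which is nonsingular and whose Smith form over $\ZZ$ genuinely has non-unit entries, so Cauchy--Binet cannot force all $p_n$ invariant factors of $U_n$ to be $1$. The paper instead looks at $UD_{n+1}=U_n U_n^t$ (equivalently $UD_n=U_{n-1}U_{n-1}^t$). Under Conjecture~\ref{conj:parametrized-Smith}, specializing $t=0$ in the Smith form of $UD_{n+1}+tI$ gives the Smith form of $UD_{n+1}$ over $\ZZ$; and this is exactly where Conjecture~\ref{conj:numerical} enters: it says that the zero eigenvalue of $UD_{n+1}$ has the largest multiplicity among the non-unit eigenvalues, which via Proposition~\ref{prop:smith-eigenvalue-relation} forces every Smith entry of $UD_{n+1}$ to be $0$ or $1$. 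Then Proposition~\ref{prop:free-cokernel-characterization} (the equivalence $\coker(U)$ free $\Leftrightarrow$ $\coker(UU^t)$ free) finishes. Your mod-$p$ detour and the worry about $p\mid r$ are unnecessary once you switch from $U_n^t U_n$ to $U_n U_n^t$.
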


\noindent
Note that it would be equivalent to conjecture that
$D_{n+1}$ has free cokernel, or only ones and zeroes in
its Smith form of $\ZZ$, since $D_{n+1}=U_n^t$.

\begin{proposition}
\label{imp:prop}
Whenever Conjecture~\ref{conj:numerical} holds,
then Conjecture~\ref{conj:parametrized-Smith} implies Conjecture~\ref{conj:free-cokernel}.
\end{proposition}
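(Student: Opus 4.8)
The plan is to prove Conjecture~\ref{conj:free-cokernel} by first showing that, under the two conjectures, the matrix $UD_{n+1}=U_nU_n^t$ (recall $D_{n+1}=U_n^t$) has a Smith form over $\ZZ$ that is diagonal with only $1$'s and $0$'s, and then transferring this to $U_n$.

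\emph{Step 1: specialize the parametrized Smith form of $UD_{n+1}$.} Assuming Conjecture~\ref{conj:parametrized-Smith} at rank $n+1$, the matrix $DU_{n+1}+tI$ has a Smith form $P(t)\diag(s_1(t),\dots,s_{p_{n+1}}(t))Q(t)$ over $\ZZ[t]$ whose entries are the explicit products recorded in \eqref{explicit-Smith-entries}. By \eqref{r-differential-relation} we have $UD_{n+1}+tI=DU_{n+1}+(t-r)I$, so applying the $\ZZ$-algebra automorphism $t\mapsto t-r$ of $\ZZ[t]$ produces a Smith form of $UD_{n+1}+tI$ over $\ZZ[t]$, and evaluating at $t=0$ then produces a Smith form of $UD_{n+1}$ over $\ZZ$: this evaluation is legitimate because $\det P(t),\det Q(t)\in\{\pm1\}$ so $P(0),Q(0)$ remain invertible over $\ZZ$, and polynomial divisibility $s_i(t)\mid s_{i+1}(t)$ passes to the values $s_i(0)\mid s_{i+1}(0)$. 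Reading off \eqref{explicit-Smith-entries}, the nonzero diagonal entries of the resulting Smith form of $UD_{n+1}$ over $\ZZ$ are exactly
\[
\prod_{\substack{i\ge 1\\ \Delta p_{n+1-i}\ge j}} ri,\qquad j=\Delta p_{n+1}+1,\dots,p_{n+1},
\]
while the remaining $\Delta p_{n+1}$ entries (those with $j\le\Delta p_{n+1}$) acquire the factor $r\cdot 0$ and vanish, in accordance with $\rank_\QQ(UD_{n+1})=p_n$ from Proposition~\ref{eig}.

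\emph{Step 2: use Conjecture~\ref{conj:numerical} to collapse these entries to $1$.} Each displayed entry is a product of integers $ri$ with $i\ge1$. If $r\ge 2$, Conjecture~\ref{conj:numerical} gives $\Delta p_{n+1-i}\le\Delta p_{n+1}<j$ for every $i\ge1$, so no index $i$ meets the condition in the product and every such entry is an empty product, hence $1$. If $r=1$, the factor with $i=1$ equals $1$ and is harmless, while for $i\ge2$ the needed inequality $\Delta p_{n+1-i}\le\Delta p_{n+1}$ is precisely the $r=1$ case of Conjecture~\ref{conj:numerical} (the unit eigenvalue $r\cdot 1=1$ of $UD$ is exactly what forces the $\delta_{r,1}$ shift there), so again no factor with $i\ge2$ occurs and the entry equals $1$. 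Thus every nonzero invariant factor of $UD_{n+1}$ over $\ZZ$ equals $1$; in particular $\rank_{\mathbb{F}_p}(UD_{n+1})=p_n$ for every prime $p$, and $\coker(UD_{n+1})$ is free.

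\emph{Step 3: transfer to $U_n$.} Over any field $F$ one has $\rank_F(U_n)\ge\rank_F(U_nU_n^t)=\rank_F(UD_{n+1})$, so taking $F=\mathbb{F}_p$ and using Step 2 gives $\rank_{\mathbb{F}_p}(U_n)\ge p_n$; since $U_n$ has $p_n$ columns and $\rank_\QQ(U_n)=p_n$ by Proposition~\ref{eig}, in fact $\rank_{\mathbb{F}_p}(U_n)=p_n$ for every prime $p$. Hence no prime divides any invariant factor of $U_n$ over $\ZZ$, so all invariant factors are $1$ and $\coker(U_n)$ is free, which is Conjecture~\ref{conj:free-cokernel}. (Alternatively, avoiding primes: $\im(U_nU_n^t)\subseteq\im(U_n)$ are both rank-$p_n$ subgroups of $\ZZ^{p_{n+1}}$, so $\im(U_n)/\im(U_nU_n^t)$ is a finite subgroup of the torsion-free group $\coker(UD_{n+1})$, hence trivial, and $\coker(U_n)=\coker(UD_{n+1})$ is free.) The reductions in Steps 1 and 3 are routine; the one delicate point is the bookkeeping in Step 2, where the vanishing pattern of the specialized entries of \eqref{explicit-Smith-entries} must be matched carefully against the precise, $r$-dependent form of Conjecture~\ref{conj:numerical}.
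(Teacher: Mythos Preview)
Your proof is correct and follows essentially the same route as the paper's: specialize the parametrized Smith form of $UD+tI$ at $t=0$, use Conjecture~\ref{conj:numerical} to see that the resulting Smith entries over $\ZZ$ are all $0$'s and $1$'s, and then pass from $\coker(UU^t)$ free to $\coker(U)$ free. The only difference is packaging: where you compute the specialized entries directly from \eqref{explicit-Smith-entries} and argue the transfer to $U_n$ by hand (via ranks mod $p$ or the image-containment argument), the paper invokes the Appendix results Proposition~\ref{prop:smith-eigenvalue-relation} (recasting Conjecture~\ref{conj:numerical} as ``the zero eigenvalue of $UD$ has maximal multiplicity among non-unit eigenvalues'') and Proposition~\ref{prop:free-cokernel-characterization} (the equivalence $\coker(U)$ free $\Leftrightarrow$ $\coker(UU^t)$ free).
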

\begin{proof}
One can be more precise about the relationship:
whenever Conjecture~\ref{conj:numerical} holds,
one has equivalence between the assertion of Conjecture~\ref{conj:free-cokernel} 
and the correctness of the Smith form over $\ZZ$ for $UD_n$ which is predicted by setting $t=0$
in the Smith form over $\ZZ[t]$ for $UD_n+tI$ that follows from
Conjecture~\ref{conj:parametrized-Smith} and Proposition~\ref{prop:smith-eigenvalue-relation}.

To see this, recall that Proposition~\ref{eig} allows one to
rephrase Conjecture~\ref{conj:numerical} as saying that the
the zero eigenvalue for $UD_n$ achieves the largest multiplicity among
all of its non-unit eigenvalues.
Thus whenever this holds, Proposition~\ref{prop:smith-eigenvalue-relation} says
that the Smith form over $\ZZ$ for $UD_n$ predicted by
Conjecture~\ref{conj:parametrized-Smith} will have only zeroes and ones as the
diagonal entries.  This is equivalent to 
$\coker(UD_n) = \ZZ^{p_n - p_{n-1}}$, which by Proposition~\ref{prop:free-cokernel-characterization}
is equivalent to $\coker(U)=\ZZ^{p_n-p_{n-1}}$, that is, to Conjecture~\ref{conj:free-cokernel}.
\end{proof}

\begin{remark}
Note that Conjecture~\ref{conj:free-cokernel} is equivalent 
(via Proposition~\ref{prop:det-divisors})
to saying that the $\gcd$ of all of the maximal minor subdeterminants in 
$U_n$ or $D_{n+1}$ is $1$.  In fact, many examples exhibit a strictly stronger
property:  $U_n$ has at least one maximal minor subdeterminant {\it equal} to $\pm 1$.
We suspect that this stronger property does {\it not} hold for all $r$-differential
posets, although we have not yet encountered any counterexamples
%
%
\end{remark}

\section{Computer tests}
\label{sec:computer-tests}

We report here on a few numerical experiments.

The first author and P. Byrnes used a computer to
find all possible {\it partial $1$-differential posets} (in the sense described
in Section~\ref{sec:Wagner} below) 
\begin{enumerate}
\item[$\bullet$] of rank $8$, coming in $643$ poset isomorphism classes, and 
\item[$\bullet$] of rank $9$, coming in $44605$ poset isomorphism classes.
\end{enumerate}
Any of these partial $1$-differential posets can be extended to a full
$1$-differential poset by Wagner's construction from the next section, 
and therefore represent all the  possibilities for the first $8$ or $9$ ranks.

The data up through rank $9$ exhibits three isomorphism classes
where the first differences of the rank numbers are {\it not} weakly
increasing;  see the remark at the end of Section~\ref{sec:numerical} for
the significance.  All three of these isomorphism classes
classes have rank sizes
$$
(p_0,p_1,p_2,\ldots,p_9)=(1,1,2,3,5,7,11,15,26,36)
$$
whose first differences are
$$
(\Delta p_1,\Delta p_2,\ldots,\Delta p_9)=(0,1,1,2,2,4,4,11,10)
$$
so that $\Delta_8 = 11 > 10 =\Delta_9$.

Regarding Conjecture~\ref{conj:parametrized-Smith}, if it were correct,
then the Smith form over $\ZZ[t]$ would have
its diagonal Smith entries $s_i(t)$ given explicitly by \eqref{explicit-Smith-entries},
via Proposition~\ref{eig} and Proposition~\ref{prop:smith-eigenvalue-relation}.
As mentioned in Remark~\ref{fraction-field-and-PID-consequences} of the Appendix, 
this then predicts the Smith form over $\ZZ$ (which always exists) for $DU_n+kI$ 
when $k$ is any chosen integer.
For each of the $643$ non-isomorphic
partial $1$-differential posets of rank $8$, considering $DU_n+kI$
with $n=0,1,2,\ldots,7$, this predicted Smith form over $\ZZ$ 
was checked to be correct for all integers $k$ in the range $[-10,+10]$.
In addition, in Young's lattice $\Y$, the Smith form for $DU_n (=DU_n+0 \cdot I)$
was checked to obey the prediction up through $n=20$.

\section{Constructions for $r$-differential posets}
\label{sec:constructions} 

We recall here two known ways to construct new differential posets
from old ones, and discuss how these constructions respect Conjectures~\ref{conj:parametrized-Smith}, 
\ref{conj:numerical}, and \ref{conj:free-cokernel}.

\subsection{Wagner's reflection-extension construction}
\label{sec:Wagner}
\emph{Wagner's construction} produces an $r$-differential posets from a
\emph{partial $r$-differential poset} of some finite rank (see~\cite[\S 6]{S1}).
Let $P$ be a finite graded poset of rank $n$ with a minimum element,
and assume that for $i=0,1,\ldots,n-1$ one has
\[
DU_i-UD_i=rI.
\]
One calls $P$ a \emph{partial $r$-differential poset} of rank $n$.  
Let $P^+$ be the poset of rank $n+1$ obtained from $P$ in the following way: for each $v\in P_{n-1}$, 
add a vertex $v^*$ of rank $n+1$ to $P$ that covers exactly those $x\in P_n$ covering $v$.  
Finally, above each $x\in P_n$ we adjoin $r$ new elements covering only $x$ in $P^+$.  
It is not hard to see that iterating this construction produces an $r$-differential poset.

An important example of this construction are the posets $Z(r)$ described explicitly
in Section~\ref{sec:Fibonacci-section} below, but which can also be obtained 
by applying Wagner's construction to the partial poset of rank zero having
only one element.
%
As an example, see Figure~\ref{z2} for $Z(2)$.

 \begin{figure}[hbtp]
{
\includegraphics{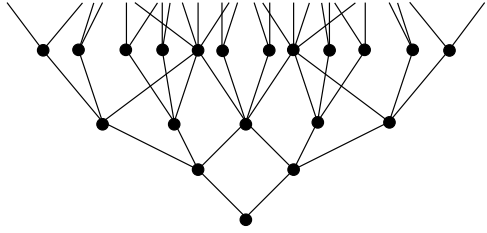}
}
\caption[]{Part of the $2$-differential poset $Z(2)$.}\label{z2}
\end{figure}

\begin{proposition}
\label{prop:wagner-respects-conjectures}  
Let $\hat{P}$ be a partial $r-$differential poset of rank $n$, and $P$ the differential
poset obtained from $\hat{P}$ by Wagner's construction.  
\begin{enumerate}
\item[(i)]
If the inequalities of Conjecture~\ref{conj:numerical} are satisfied by $\hat{P}$ up through
rank $n$, then $P$ satisfies Conjecture~\ref{conj:numerical}.
\item[(ii)]
If Conjecture~\ref{conj:free-cokernel} is satisfied by $\hat{P}$, in the
sense that $\coker(U_j)$ is free for all $0\leq j\leq n-1$,
then $P$ satisfies Conjecture~\ref{conj:free-cokernel}.
\end{enumerate}
\end{proposition}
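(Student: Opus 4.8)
The plan is to track exactly what Wagner's construction does to the up and down maps at the top two ranks, and then invoke the fact that iterating the construction is what produces $P$ from $\hat{P}$, so that it suffices to check that one application of the construction preserves the relevant hypotheses one rank higher. Write $\hat{P}$ for the partial poset of rank $n$, and let $\hat{P}^+$ be the partial poset of rank $n+1$ obtained by one Wagner step. In $\hat{P}^+$ the new rank-$(n+1)$ elements split into two families: the ``reflected'' vertices $v^*$ for $v \in \hat P_{n-1}$, whose down-set in rank $n$ is exactly the up-set of $v$ (so the corresponding columns of $D_{n+1}$ are the rows of $U_{n-1}$, i.e. the columns of $D_n^t$), and the $r \cdot p_n$ ``pendant'' vertices, each covering a unique $x \in \hat P_n$ (contributing, after grouping, $r$ copies of the identity block). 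Thus in block form $U_n \colon \ZZ^{p_n} \to \ZZ^{p_{n+1}}$ has matrix $\begin{bmatrix} U_{n-1}^t \\ I \\ \vdots \\ I \end{bmatrix} = \begin{bmatrix} D_n \\ \ones \otimes I \end{bmatrix}$ where the identity block is repeated $r$ times.

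For part (ii), the presence of at least one identity block makes this immediate: row operations using the identity block clear out the $D_n$ block entirely, so the Smith form of $U_n$ over $\ZZ$ is a column of $I_{p_n}$ stacked on zeros, hence has only ones and zeroes; equivalently $\coker(U_n) = \ZZ^{p_{n+1}-p_n}$ is free. Since the hypothesis already gives free cokernel for $U_0,\ldots,U_{n-1}$, and every subsequent application of Wagner's construction again attaches a pendant identity block at its top rank, an easy induction shows $\coker(U_j)$ is free for all $j \geq 0$, which is Conjecture~\ref{conj:free-cokernel} for $P$.

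For part (i), I would argue at the level of rank sizes together with the rephrasing of Conjecture~\ref{conj:numerical} supplied in the excerpt: it asserts that for each $m$, the multiplicity of the zero eigenvalue of $UD_m$ is the largest among the multiplicities of the non-unit eigenvalues, and by Proposition~\ref{eig} this is a purely numerical statement about the differences $\Delta p_k$. Wagner's construction forces $\Delta p_{n+1} = p_{n+1}-p_n = (p_{n-1} + r p_n) - p_n = \Delta p_{n-1} + (r-1)p_n + (p_{n-1})$; more usefully, one computes directly that $p_{n+1} = p_{n-1} + (r+1)p_n - p_n$, wait — $p_{n+1} = |\hat P_{n-1}| + r|\hat P_n| = p_{n-1} + r p_n$, so $\Delta p_{n+1} = p_{n-1} + (r-1)p_n$. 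One then checks the single new inequality of Conjecture~\ref{conj:numerical} at rank $n+1$, namely $\Delta p_{n+1} \geq \Delta p_{n+1-j-\delta_{r,1}}$ for $1 \leq j \leq n+1$, using that the lower differences already satisfy the conjecture by hypothesis and that $p_n$ weakly increases (Proposition~\ref{eig}); the growth factor $(r+1)$ built into $p_{n+1}$ gives plenty of room. Each further Wagner step adds one more rank and one more inequality to verify, of exactly the same shape, so induction finishes part (i).

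The routine but slightly fiddly point — and the step I expect to be the main obstacle — is the $r=1$ case of part (i), where Conjecture~\ref{conj:numerical} only asks $\Delta p_k \le \Delta p_{n}$ for $k \le n-2$ rather than full monotonicity of first differences; here one cannot simply chain inequalities, and one must instead expand $\Delta p_{n+1} = p_{n-1}$ (since $r=1$) and bound it below by $\Delta p_{k}$ for $k \le n-1$ using $p_{n-1} = p_{k} + (\Delta p_{k+1} + \cdots + \Delta p_{n-1})$ together with the inductive hypotheses on the lower differences. Checking that this telescoping genuinely yields $\Delta p_{n+1} \geq \Delta p_k$ for all the required $k$ — and in particular being careful about the shifted index $n-j-1$ versus $n-j$ — is the one place where I would actually grind through the inequality rather than appeal to a soft argument.
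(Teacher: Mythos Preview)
Your approach matches the paper's: the identity block in $U_n$ gives (ii) immediately, and for (i) the key identity $\Delta p_{n+1} = (r-1)p_n + p_{n-1}$ together with induction on the Wagner steps is exactly what the paper does. The one place you overcomplicate things is the $r=1$ case: there is no need for telescoping or for the inductive hypotheses on lower differences, since $\Delta p_{n+1}=p_{n-1}\geq p_{n-j}\geq \Delta p_{n-j}$ for all $j\geq 1$ follows directly from the weak increase of rank sizes (Proposition~\ref{eig}) and the trivial bound $\Delta p_m\leq p_m$.
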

\begin{proof}  
For (i), note that
\[
p_{n+1}=p_{n-1}+r p_{n}
\]
and hence
$$
\Delta p_{n+1} = (r-1)p_n + p_{n-1}.
$$
Therefore if $r \geq 2$ then
$$
\Delta p_{n+1} \geq p_n  \geq \Delta p_n
$$
giving Conjecture~\ref{conj:numerical} in this case.
If $r=1$ then it says
$$
\Delta_{p_{n+1}} = p_{n-1}
\geq p_{n-j} \geq \Delta p_{n-j} 
$$
for $j \geq 1$, which gives Conjecture~\ref{conj:numerical} in this case.

For (ii), note that by Wagner's construction, the $p_{n+1} \times p_n$ matrix
for $U_n$ will contain a $p_n \times p_n$ (maximal) minor equal to the identity
matrix, and hence have free cokernel.
\end{proof}

\begin{remark}
The proof of Theorem~\ref{thm:Fibonacci} below will show that, for $r \geq 2$,
Wagner's construction also respects Conjecture~\ref{conj:parametrized-Smith},
in the following sense:  if one has a partial $r$-differential 
poset with ranks $0,1,\ldots,n-1$, and one applies Wagner's construction twice
producing the two new ranks $P_n, P_{n+1}$,
then one has the following relation (see \eqref{Fibonacci-recursive-smith}
below):
$$
DU_n+tI
\quad \sim \quad
I_{n-1} \oplus (t+r)I_{(r-2)\cdot p_{n-1}+p_{n-2}} 
\oplus (t+r)\left(DU_{n-1}+(t+r)I\right)
$$
where $A \oplus B$ denotes block direct sum of matrices,
and $A \sim B$ for matrices $A,B$ in $\ZZ[t]^{m \times n}$ means that
$P(t) \cdot A \cdot Q(t)=B$ for some $P(t),Q(t)$ in $GL_m(\ZZ[t]), GL_n(\ZZ[t])$ respectively.
Thus $DU_n+tI$ has Smith form over $\ZZ[t]$ if and only if
$DU_{n-1}+tI$ does.

Unfortunately the part of the proof of Theorem~\ref{thm:Fibonacci} 
which treats the case $r=1$ does not have this nice property.
\end{remark}

\subsection{The Cartesian product construction}

If $P$ and $P'$ are posets, their \emph{Cartesian product} $P\times P'$ is
endowed with the componentwise partial order.

\begin{example}  The following is a small example for the Cartesian product of two posets.
\begin{center}
{
\psfrag{X}{$\times$}
\psfrag{=}{$\cong$}
\includegraphics{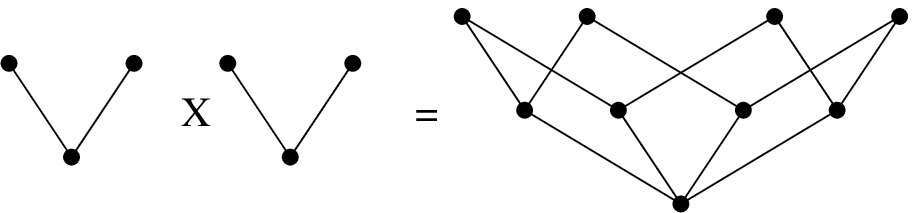}
}
\end{center}
\end{example}

Stanley observed that if $P$ and $P'$ are $r$- and $r'$-differential posets, then $P\times P'$ 
is an $(r+r')$-differential poset.

\begin{proposition}\label{prop:product-respects-numerical}  
If  $P, P'$ are any $r$-, $r'$-differential posets, then
Conjecture~\ref{conj:numerical} holds for the $(r+r')$-differential poset $P\times P'$.  
\end{proposition}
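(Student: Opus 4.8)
The plan is to reduce the statement to a purely numerical fact about the rank sizes of a Cartesian product and then invoke the rephrasing of Conjecture~\ref{conj:numerical} in terms of eigenvalue multiplicities. Write $p_n$ for the rank sizes of $P$, $p'_n$ for those of $P'$, and $q_n$ for those of $P \times P'$. Since the $n$th rank of $P \times P'$ consists of pairs $(x,y)$ with $x \in P_i$, $y \in P_{n-i}$, we have the convolution formula $q_n = \sum_{i=0}^n p_i \, p'_{n-i}$, with the convention $p_k = p'_k = 0$ for $k<0$. The first differences then satisfy
\[
\Delta q_n = \sum_{i=0}^n \left( \Delta p_i \cdot p'_{n-i} + p_{i-1} \cdot \Delta p'_{n-i} \right),
\]
or more symmetrically, using $p_n = \sum_{j \le n} \Delta p_j$, one gets $\Delta q_n = \sum_{a+b = n-1} \Delta p_a \, \Delta p'_b$ after a short manipulation (a discrete integration by parts). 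So $\Delta q_\bullet$ is the convolution of the two sequences $\Delta p_\bullet$ and $\Delta p'_\bullet$ (shifted by one).

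Next I would handle the cases $r+r' \ge 2$ and $r+r' = 1$ separately, matching the two-part form of Conjecture~\ref{conj:numerical}. If both $P$ and $P'$ are such that $\Delta p_\bullet$ and $\Delta p'_\bullet$ are weakly increasing --- which holds automatically when $r, r' \ge 2$ by Conjecture~\ref{conj:numerical} applied to the factors, and which I would need to establish more carefully when one factor is $1$-differential --- then the convolution $\Delta q_\bullet$ is also weakly increasing. Indeed, if $a_\bullet$ and $b_\bullet$ are nonnegative weakly increasing sequences (extended by $0$ to negative indices, so they are eventually constant... wait, they need not be, but they are weakly increasing on $\NN$), then $c_n := \sum_{a+b=n-1} a_a b_b$ satisfies $c_{n+1} - c_n = \sum_{a+b = n} a_a b_b - \sum_{a+b=n-1} a_a b_b \ge 0$ by a standard rearrangement/telescoping argument comparing the two sums term by term. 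This gives \eqref{second-differences-nonnegative} for $P \times P'$, which is exactly Conjecture~\ref{conj:numerical} when $r+r' \ge 2$, and it is even stronger than what is required when $r+r' = 1$.

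The main obstacle is the $r = 1$ factor. Conjecture~\ref{conj:numerical} for a $1$-differential poset only gives $\Delta p_1, \ldots, \Delta p_{n-2} \le \Delta p_n$, i.e. weak increase with a possible "dip" of length one at each step, not genuine monotonicity of $\Delta p_\bullet$ (and the computer data in Section~\ref{sec:computer-tests} shows $\Delta p_\bullet$ really can fail to be weakly increasing). So I cannot simply convolve two weakly increasing sequences. The fix is: in the Cartesian product $P \times P'$ with, say, $P$ $1$-differential and $P'$ $r'$-differential with $r' \ge 1$, I would show directly that $\Delta q_n \ge \Delta q_{n-j}$ for the range of $j$ demanded by Conjecture~\ref{conj:numerical} for an $(1+r')$-differential poset --- and here the key point is that $r + r' = 1 + r' \ge 2$, so the product falls under the stronger clause \eqref{second-differences-nonnegative}, meaning I genuinely need $\Delta q_\bullet$ weakly increasing. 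To get this despite $\Delta p_\bullet$ only being "almost" weakly increasing, I would use that in the convolution $\Delta q_n = \sum_{a+b=n-1}\Delta p_a \Delta p'_b$, a local dip in $\Delta p_\bullet$ at one index is smoothed out by summing against the other factor; concretely, I expect to need the inequality $\Delta q_{n+1} - \Delta q_n = \sum_{a+b=n}\Delta p_a \Delta p'_b - \sum_{a+b=n-1}\Delta p_a \Delta p'_b \ge 0$, which I would prove by pairing the term indexed $(a,b)$ with $a+b=n-1$ against the term $(a+1,b)$ (or $(a,b+1)$) with $a+b+1 = n$ and using that the bad index of $\Delta p_\bullet$ contributes $\Delta p_{k-1} - \Delta p_k$ which, even when negative, is dominated because $\Delta p_\bullet$ is weakly increasing two steps out (so $\Delta p_{k-1} \le \Delta p_{k+1}$) and $\Delta p'_\bullet$ is itself weakly increasing (as $r' \ge 2$) or, if $r'=1$, one runs the same "two-step" argument on both factors simultaneously. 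Finally, once $\Delta q_\bullet$ is shown weakly increasing, I would translate back via Proposition~\ref{eig}: the multiplicities of the eigenvalues of $UD_n$ for $P\times P'$ are the $\Delta q_{n-i}$, weak increase of $\Delta q_\bullet$ says the multiplicity of the smallest (zero) eigenvalue dominates all others, and that is precisely the eigenvalue-multiplicity rephrasing of Conjecture~\ref{conj:numerical} recorded after its statement.
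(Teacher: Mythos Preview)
Your argument has a computational error that, once fixed, collapses the whole proof to something much shorter---and in fact to the paper's own argument.

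The identity $\Delta q_n = \sum_{a+b = n-1} \Delta p_a\,\Delta p'_b$ is not correct. From $\Delta q_n = \sum_i \Delta p_i\, p'_{n-i}$ and $p'_{n-i} = \sum_{b \le n-i} \Delta p'_b$ one gets $\Delta q_n = \sum_{a+b \le n} \Delta p_a\, \Delta p'_b$, and hence
\[
\Delta^2 q_n \;=\; \Delta q_n - \Delta q_{n-1} \;=\; \sum_{a+b = n} \Delta p_a\, \Delta p'_b.
\]
Equivalently, in generating functions, $(1-t)^2 Q(t) = \bigl((1-t)P(t)\bigr)\bigl((1-t)P'(t)\bigr)$. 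So your convolution actually computes the \emph{second} differences of $q$, not the first.

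Once you see this, the rest of your outline is unnecessary. You do not need Conjecture~\ref{conj:numerical} for the factors $P$ and $P'$ (and indeed the proposition does not assume it, so invoking it for $r,r' \ge 2$ is circular). All you need is $\Delta p_a \ge 0$ and $\Delta p'_b \ge 0$, which is a \emph{theorem} (Proposition~\ref{eig}), not a conjecture. The convolution of two nonnegative sequences is nonnegative, so $\Delta^2 q_n \ge 0$ immediately. Since $r,r' \ge 1$ forces $r+r' \ge 2$, this is exactly the inequality \eqref{second-differences-nonnegative} that Conjecture~\ref{conj:numerical} demands for the product, and you are done. There is no case $r+r' = 1$ to treat, and the entire ``smoothing out the dip'' discussion is not needed.

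This is precisely the paper's proof, stated there via the generating function product $(1-t)^2 P(t)P'(t) \in \NN[[t]]$.
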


\begin{proof}  
As $r+r' \geq 2$, one needs to show that the rank sizes $r''_n:=|(P \times P')_n|$
satisfy $\Delta^2 r''_n \geq 0$.  If
$$
\begin{aligned}
P(t)&:=\sum_n r_n t^n\\
P'(t)&:=\sum_n r'_n t^n\\
\text{ then }(P\times P')(t)&:=\sum_n r''_n t^n=P(t)P'(t).
\end{aligned}
$$
Since both $P, P'$ have their rank sizes weakly increasing by Proposition~\ref{eig},
both $(1-t)P(t)$ and $(1-t)P'(t)$ lie in $\NN[[t]]$.
Hence their product
\[
(1-t)^2P(t)P'(t) = 
(1-2t+t^2) \sum_n r''_n t^n
\]
also lies in $\NN[[t]]$, which means that $\Delta^2 r''_n\geq 0$.
\end{proof}

\begin{proposition}\label{prop:product-respects-free-cokernel}  
Let $P$ and $P'$ be differential posets.  If Conjecture~\ref{conj:free-cokernel} holds
for either $P$ or $P'$ then it also holds for $P\times P'$.
\end{proposition}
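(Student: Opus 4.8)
The plan is to prove the ostensibly stronger statement that the up map $U''_n$ of $P\times P'$ is \emph{split injective} over $\ZZ$, i.e.\ has a left inverse; split injectivity immediately forces $\coker(U''_n)$ to be free, since a complement of the image is then isomorphic to a summand of a free group. Using $P\times P'\cong P'\times P$, I may assume Conjecture~\ref{conj:free-cokernel} holds for $P'$. Writing $U_i$ and $U'_j$ for the up maps of $P$ and $P'$, the hypothesis then yields for each $j$ a left inverse $r'_j\colon\ZZ^{p'_{j+1}}\to\ZZ^{p'_j}$ with $r'_j U'_j=I$, because the short exact sequence $0\to\ZZ^{p'_j}\xrightarrow{U'_j}\ZZ^{p'_{j+1}}\to\coker(U'_j)\to 0$ (whose exactness on the left uses injectivity of $U'_j$ over $\ZZ$, from Proposition~\ref{eig}) splits once its cokernel is known to be free.

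Next I would use the identification $(P\times P')_n=\bigsqcup_{i+j=n}P_i\times P'_j$, which on free abelian groups reads $\ZZ^{p''_n}=\bigoplus_{i+j=n}\ZZ^{p_i}\otimes\ZZ^{p'_j}$, together with the fact that on the summand $\ZZ^{p_i}\otimes\ZZ^{p'_j}$ the up map of $P\times P'$ is $U_i\otimes I+I\otimes U'_j$. The key step is then to build the candidate left inverse $R\colon\ZZ^{p''_{n+1}}\to\ZZ^{p''_n}$ by declaring that $R$ acts on the summand $\ZZ^{p_a}\otimes\ZZ^{p'_b}$ as $I\otimes r'_{b-1}\colon\ZZ^{p_a}\otimes\ZZ^{p'_b}\to\ZZ^{p_a}\otimes\ZZ^{p'_{b-1}}$ when $b\geq 1$ and as $0$ when $b=0$. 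A direct computation using $r'_j U'_j=I$ then gives
\[
R\,U''_n(x\otimes y)=x\otimes y+U_i x\otimes r'_{j-1}y
\qquad (x\otimes y\in\ZZ^{p_i}\otimes\ZZ^{p'_j}),
\]
the last term being absent when $j=0$; in other words $R\,U''_n=I+N$, where $N$ carries the summand $\ZZ^{p_i}\otimes\ZZ^{p'_j}$ into $\ZZ^{p_{i+1}}\otimes\ZZ^{p'_{j-1}}$.

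To finish, I would observe that $N$ strictly increases the $P$-degree $i$, which ranges only over $0,1,\dots,n$, so $N^{n+1}=0$ and $I+N$ is unipotent, hence invertible over $\ZZ$ with inverse $\sum_{k=0}^n(-N)^k$; therefore $\bigl(\sum_{k=0}^n(-N)^k\bigr)R$ is a left inverse of $U''_n$ over $\ZZ$, and $\coker(U''_n)$ is free. I do not expect a serious obstacle here: the only points demanding care are invoking the hypothesis on the correct factor (which is exactly why the statement allows \emph{either} factor, via $P\times P'\cong P'\times P$), and checking that the error term $N$ only ever shifts mass toward strictly higher $P$-degree — which is precisely what makes $I+N$ unipotent.
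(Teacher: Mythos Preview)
Your proof is correct and takes a genuinely different route from the paper's. The paper writes $U''_{n-1}$ as an explicit $(n+1)\times n$ block ``bidiagonal'' matrix with blocks $I_{p_i}\otimes U'_{n-i-1}$ on one diagonal and $U_i\otimes I_{p'_{n-i-1}}$ on the other, then uses the free-cokernel hypothesis to change bases so that one family of blocks becomes $\left[\begin{smallmatrix}I\\0\end{smallmatrix}\right]$ (or $[I\ 0]$), and finally sweeps out the remaining blocks by row/column operations to exhibit a Smith form with only zeroes and ones. Your argument instead works coordinate-free: you build a candidate retraction $R$ from the sectionings $r'_j$ of the $U'_j$, observe that $R\,U''_n=I+N$ with $N$ strictly raising the $P$-degree, and invert $I+N$ as a unipotent. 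The paper's approach has the advantage of staying in the Smith-form language used throughout and making the block structure visible; yours is cleaner in that it replaces the somewhat informal ``no matter how the factor below it has been altered'' step by the transparent nilpotence of $N$, and it makes explicit that what is really being proved is split injectivity of $U''_n$ (a fact the paper uses but does not name).
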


\begin{proof}
We begin by noting that 
one can identify
$$
\ZZ^{(P \times P')_n} = \bigoplus_{i+j=n} \ZZ^{p_i} \otimes_\ZZ \ZZ^{p'_j}.
$$
Under this identification, the up map 
$U^{P \times P'}_{n-1}: \ZZ^{(P \times P')_{n-1}} \rightarrow \ZZ^{(P \times P')_n}$
is given by 
\begin{equation}
\label{product-up-map}
\bigoplus_{i+j=n} \left( I_{p_i} \otimes U'_j \right) \oplus
               \left( U_i \otimes I_{p'_j} \right)
\end{equation}
or in block matrix terms:
\[
U^{P \times P'}_{n-1}=
\left[
\begin{matrix}
  I_{p_0}\otimes U'_{n-1}    &                           &       &        &\\
  U_0\otimes I_{p'_{n-1}}    &I_{p_1}\otimes U'_{n-2} &      &        &\\
                              &U_1\otimes I_{p'_{n-2}}  &\ddots &        &\\
                              &                           &\ddots &\ddots  &\\
                              &                           &       &\ddots  & I_{p_{n-1}}\otimes U'_0\\
                              &                           &       &        & U_{n-1}\otimes I_{p'_0}
\end{matrix}
\right].
\]
Assume Conjecture~\ref{conj:free-cokernel} holds for $P$ (respectively, for $P'$).
Then one can perform row and column operations to arrive at a matrix
with the same zero blocks, but where the $(i,i)$ block of the form $I_{p_i} \otimes U'_{n-i-1}$
(respectively, the $(i+1,i)$ block of the form $U_{i} \otimes I_{p'_{n-i-1}}$ )
has been changed to 
\begin{equation}
\label{new-block}
I_{p_i} \otimes \left[ \begin{matrix} I_{p'_{n-i-1}} \\ 0 \end{matrix} \right].
\quad \left( \text{ respectively, }
\left[ \begin{matrix} I_{p_i} & 0 \end{matrix} \right] \otimes I_{p'_{n-i-1}}.
\right).
\end{equation}
One can then see that no matter how the factor below it (respectively, to the right of it)
has been altered during this process, one can then use the new block in \eqref{new-block} 
to eliminate it with row (respectively, column) operations. 
Thus the whole matrix has only zeroes and ones in its Smith form over $\ZZ$, and
hence has free cokernel, as desired.
\end{proof}

\begin{remark}  
The expression \eqref{product-up-map} for the up map in $P \times P'$ can
also be used to give the following block tridiagonal form for $DU_n$ in
the Cartesian product $P \times P'$:
\[
DU_n^{P\times P'}
 =
D_n \otimes U'_n
\quad \oplus \quad
\left( I_{p_n} \otimes DU'_n + DU_n \otimes I_{p'_n} \right)
\quad \oplus \quad 
U_n \otimes D'_n.
\]
However, we have not seen how to use this to show that
the Cartesian product construction respects 
Conjecture~\ref{conj:parametrized-Smith} in some way.
\end{remark}

\section{The Young-Fibonacci posets}
\label{sec:Fibonacci-section} 

As a set, the \emph{Fibonacci $r$-differential poset} 
$Z(r)$ consists of all finite words using alphabet 
$\{1_1,1_2,\ldots,1_r,2\}$.  For two words $w,w'\in Z(r)$, we define $w$ to cover $w'$ if either
\begin{enumerate}
\item $w'$ is obtained from $w$ by changing a $2$ to some $1_i$, as long as only $2$'s occur to its left, or
\item $w'$ is obtained from $w$ by deleting its first letter of the form $1_i$.
\end{enumerate}

\begin{figure}[hbtp]
{
\psfrag{0}{$\varnothing$}
\psfrag{1}{$1$}
\psfrag{2}{$2$}
\psfrag{12}{$12$}
\psfrag{21}{$21$}
\psfrag{111}{$111$}
\psfrag{112}{$112$}
\psfrag{121}{$121$}
\psfrag{211}{$211$}
\psfrag{22}{$22$}
\psfrag{1111}{$1111$}
\psfrag{11}{$11$}
\includegraphics{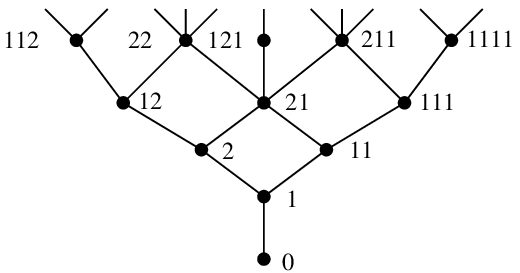}
}
\caption[]{The Young-Fibonacci lattice $\YF$.}\label{Z:lattice}
\end{figure}

It is known and not hard to show \cite[\S6]{S1} that
$Z(r)$ is obtained using Wagner's construction
from a single point
and hence an $r$-differential poset;  see
The reason for its name is that the rank sizes of $Z(1)=\YF$ (the \emph{Young-Fibonacci} lattice) 
are the Fibonacci numbers; see Figure~\ref{Z:lattice}.

\subsection{Proof of all conjectures for $\YF$ and $Z(r)$}

\begin{theorem}  
\label{thm:Fibonacci}
All of the Conjectures~\ref{conj:parametrized-Smith}, \ref{conj:numerical}, \ref{conj:free-cokernel} 
hold for the Young-Fibonacci $r$-differential posets $Z(r)$ with $r \geq 1$.
\end{theorem}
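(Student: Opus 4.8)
The plan is to dispose of Conjectures~\ref{conj:numerical} and~\ref{conj:free-cokernel} for $Z(r)$ at once, and then prove Conjecture~\ref{conj:parametrized-Smith} by induction, the inductive step having a clean form when $r\geq 2$ and a considerably more delicate form when $r=1$. Since $Z(r)$ is obtained by iterating Wagner's construction from the single-element poset of rank $0$ (which trivially satisfies everything), Proposition~\ref{prop:wagner-respects-conjectures}(i),(ii) shows, by induction on the rank, that $Z(r)$ satisfies Conjectures~\ref{conj:numerical} and~\ref{conj:free-cokernel}; in particular each up map $U_n$ of $Z(r)$ contains an identity minor $I_{p_n}$, a fact we shall re-use. (By Proposition~\ref{imp:prop}, Conjecture~\ref{conj:free-cokernel} would in any case follow from Conjecture~\ref{conj:parametrized-Smith}, but we get it directly.) So everything reduces to showing that $DU_n+tI$ has a Smith form over $\ZZ[t]$ for all $n$.

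Assume first $r\geq 2$. Using that rank $n$ of $Z(r)$ arises from ranks $n-2,n-1$ by Wagner's construction, one writes $DU_n+tI$ in block form in which an $r\times r$ ``grid'' of blocks $\bigl((t+r)I_r+\mathbf{1}_r\mathbf{1}_r^{t}\bigr)\otimes I_{p_{n-1}}$ is coupled through a single block-row to $D_{n-1}$, through a single block-column to $U_{n-2}$, and has corner block $DU_{n-2}+(t+r)I_{p_{n-2}}$. A pair of integer bidiagonal changes of basis of the $r$ grid coordinates (lying in $GL_r(\ZZ)\subset GL_r(\ZZ[t])$) replaces the grid by the tridiagonal pattern with diagonal blocks $(t+r+1)I,2(t+r)I,\dots,2(t+r)I$ and off-diagonal blocks $-(t+r)I$, while collapsing the $D_{n-1}$- and $U_{n-2}$-couplings onto the first grid block. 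One further row operation turns the $(1,1)$ grid block into $I_{p_{n-1}}$; splitting that off leaves $(t+r)$ times a matrix with identity corner $I_{p_{n-2}}$; splitting that off as well and invoking \eqref{r-differential-relation} in the form $UD_{n-1}=DU_{n-1}-rI$ leaves $(t+r)$ times a tridiagonal matrix $N'$ with corner $\left(\begin{smallmatrix} DU_{n-1}+(t+2)I & -(DU_{n-1}+(t+1)I)\\ -I & 2I\end{smallmatrix}\right)$ and all remaining blocks $2I,-I$. Finally a chain of $r-2$ moves of the shape ``add a neighbouring block-column, obtain an identity block, split it off'' reduces $N'$ to $I_{(r-2)p_{n-1}}\oplus\bigl(DU_{n-1}+(t+r)I\bigr)$, the surviving parameter marching from $t+2$ up to $t+r$. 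Altogether this establishes the parametrized recursion
\begin{equation}
\label{Fibonacci-recursive-smith}
DU_n+tI \ \sim\ I_{p_{n-1}}\ \oplus\ (t+r)I_{(r-2)p_{n-1}+p_{n-2}}\ \oplus\ (t+r)\bigl(DU_{n-1}+(t+r)I\bigr).
\end{equation}

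The induction then closes. The base case $DU_0+tI=[\,t+r\,]$ is already in Smith form. Given a Smith form for $DU_{n-1}+tI$, the ring automorphism $t\mapsto t+r$ of $\ZZ[t]$ produces one for $DU_{n-1}+(t+r)I$, hence for $(t+r)\bigl(DU_{n-1}+(t+r)I\bigr)$; adjoining $I_{p_{n-1}}$ and copies of $t+r$, the resulting diagonal entries still form a divisibility chain---by Proposition~\ref{eig} and \eqref{explicit-Smith-entries} they are products of the linear forms $t+rk$ with the required nesting---so can be permuted into Smith form, giving one for $DU_n+tI$.

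The case $r=1$, i.e.\ the Young--Fibonacci lattice $\YF=Z(1)$, is where the real work is, and the step I expect to be the main obstacle. Here Wagner's construction adjoins only one new element at a time, the ``grid'' above degenerates, and \eqref{Fibonacci-recursive-smith} has no clean analogue; one is forced to unfold Wagner's construction through several ranks simultaneously. Concretely $DU_n+tI=\left(\begin{smallmatrix} DU_{n-2}+(t+1)I & D_{n-1}\\ U_{n-2} & (t+2)I\end{smallmatrix}\right)$, and the plan is to use the identity minor $I_{p_{n-2}}$ hidden in $U_{n-2}$---and then the one hidden in $U_{n-3}$, and so on---to split off identity blocks, the surviving factor being $(t+1)$ times a $GL(\ZZ[t])$-transform of $DU_{n-2}+(t+2)I$ premultiplied by a diagonal matrix carrying some $(t+2)$ entries. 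Keeping track of the accumulating $(t+1)$ and $(t+2)$ prefactors is delicate, and the cleanest route is likely to strengthen the inductive hypothesis enough to feed this unfolding; the base cases are $DU_0+tI=[\,t+1\,]$ and $DU_1+tI=[\,t+2\,]$, and the output should match the prediction of \eqref{explicit-Smith-entries}, whose top entry $(t+n+1)(t+n-1)(t+n-2)\cdots(t+1)$ tellingly omits the factor $t+n$.
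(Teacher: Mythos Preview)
Your treatment of Conjectures~\ref{conj:numerical} and~\ref{conj:free-cokernel} via Proposition~\ref{prop:wagner-respects-conjectures} matches the paper exactly. For $r\ge 2$ you and the paper both reduce to the same recursion~\eqref{Fibonacci-recursive-smith} by block manipulation of the $(r+1)\times(r+1)$ block form of $DU_n+tI$ arising from Wagner's construction; the paper clears rows and columns using one of the off-diagonal $I_{p_{n-1}}$ blocks directly, whereas you conjugate by a bidiagonal change on the $r$ grid coordinates first. Your intermediate steps are terse (for instance, ``one further row operation'' does not literally suffice to produce the identity block you claim, and the subsequent ``chain of $r-2$ moves'' is asserted rather than exhibited), but the route is viable and the endpoint is the same.

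The genuine gap is the case $r=1$. You have correctly written down the $2\times 2$ block form of $DU_n+tI$ and correctly diagnosed that~\eqref{Fibonacci-recursive-smith} has no direct analogue, but what follows is a plan, not a proof: phrases like ``the plan is'', ``is likely to'', and ``should match'' signal that the required inductive hypothesis has not actually been found. The paper's solution is to unfold one more level and isolate the auxiliary matrix
\[
A_n(t):=\begin{bmatrix} -tD_{n-2} & (t+1)I_{p_{n-3}}\\[2pt] -t(t+2)I_{p_{n-2}} & -(t+1)U_{n-3}\end{bmatrix},
\]
show that $DU_n+(t-1)I\sim I_{p_{n-2}}\oplus A_n(t)$, and then establish the clean self-referential recursion
\[
A_n(t)\ \sim\ I_{p_{n-3}}\oplus(-t)\,A_{n-1}(t+1)
\]
by one further use of the recursive descriptions of $U$ and $D$. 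This auxiliary $A_n(t)$ is exactly the ``strengthened inductive hypothesis'' you gesture toward; identifying it and verifying its recursion is the entire content of the $r=1$ argument, and without it the proof is incomplete.
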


\begin{proof}
It follows from Proposition~\ref{prop:wagner-respects-conjectures}
that Conjectures~\ref{conj:numerical} and \ref{conj:free-cokernel} hold.   
For Conjecture~\ref{conj:parametrized-Smith}, one
begins with the following recursive descriptions of $U_n$ and $D_n$:
\begin{equation}
\label{recursive-up-down-descriptions}
U_n=
\begin{bmatrix}
D_n\\
I_{p_n}\\
\vdots\\
I_{p_n}
\end{bmatrix}\qquad\text{and}\qquad
D_{n+1}=
\begin{bmatrix}
U_{n-1} & I_{p_n} & \cdots & I_{p_n}
\end{bmatrix}.
\end{equation}
Note that each block matrix above contains exactly $r$ blocks of $I_{p_n}$.  
Using this, one can write 
\[D_{n+1}=\
\begin{bmatrix}
\begin{matrix} D_{n-1}\\I_{p_{n-1}}\\\vdots \\I_{p_{n-1}}\end{matrix} & 
\begin{matrix} I_{p_{n-2}} &      &       &\\
                       &I_{p_{n-1}}&       &\\
                       &      &\ddots &\\
                       &      &       &I_{p_{n-1}}
\end{matrix} & \cdots & 
\begin{matrix} I_{p_{n-2}} &      &       &\\
                       &I_{p_{n-1}}&       &\\
                       &      &\ddots &\\
                       &      &       &I_{p_{n-1}}
\end{matrix}
\end{bmatrix}
\]
and multiplying it by its transpose $U_n$, one obtains the following
$(r+1) \times (r+1)$ block matrix for $n \geq 2$:
\begin{equation}
\label{Fibonacci-starting-point}
\begin{aligned}
&DU_n+(t-r)I_{p_n}=\\
&\begin{bmatrix}
DU_{n-2}+tI_{p_{n-2}} &D_{n-1} &\cdots & \cdots & \cdots & D_{n-1}\\
U_{n-2}          &(t+1)I_{p_{n-1}} & I_{p_{n-1}} & \cdots & \cdots & I_{p_{n-1}} \\
\vdots & I_{p_{n-1}} & \ddots & \ddots & &\vdots \\
\vdots & \vdots &\ddots & \ddots &\ddots &\vdots\\
\vdots & \vdots & &\ddots &\ddots & I_{p_{n-1}}\\
U_{n-2} & I_{p_{n-1}} & \cdots &\cdots &I_{p_{n-1}} & (t+1)I_{p_{n-1}}
\end{bmatrix}.
\end{aligned}
\end{equation}

We now use this description of $DU_n+(t-r)I_{p_n}$ 
together with elementary row and column operations to show
that it has a Smith form in $\ZZ[t]$.  Given two matrices $A, B$ in $\ZZ[t]^{m \times n}$,
say that $A \sim B$ if there exist $P(t), Q(t)$ in $GL_m(\ZZ[t]), GL_n(\ZZ[t])$ with $PAQ=B$.

\vskip.1in
\noindent
{\sf Case 1. $r \geq 2$}

\noindent
Because $r \geq 2$, in \eqref{Fibonacci-starting-point} there is a copy of $I_{p_{n-1}}$ 
in the second row and last column.  Use it to first clear out the blocks in
the second row, then to clear out the blocks in the last columns, and lastly,
negate the second column:
$$
\begin{aligned}
&DU_n+(t-r)I_{p_n} \\
&\sim
\begin{bmatrix}
tI_{p_{n-2}} &-tD_{n-1} &0      & \cdots & \cdots & 0 &D_{n-1}\\
0       &0         & 0     &  &  & \vdots&I_{p_{n-1}} \\
0       & -tI_{p_{n-1}} & tI_{p_{n-1}}     & \ddots & &\vdots &I_{p_{n-1}} \\
0       & -tI_{p_{n-1}} & 0           &   \ddots      & \ddots &\vdots & \vdots\\
\vdots  & \vdots   &\vdots & \ddots &tI_{p_{n-1}} &0 &I_{p_{n-1}}\\
0& -tI_{p_{n-1}} &0 & \cdots & 0& tI_{p_{n-1}}&I_{p_{n-1}}\\
-tU_{n-2} & -t(t+2)I_{p_{n-1}} & -tI_{p_{n-1}} &\cdots &-tI_{p_{n-1}} &-tI_{p_{n-1}} &(t+1)I_{p_{n-1}}
\end{bmatrix}\\
&\sim
\begin{bmatrix}
tI_{p_{n-2}} &tD_{n-1} &0      & \cdots & \cdots & 0 &0\\
0       &0         & 0     &  &  & \vdots&I_{p_{n-1}} \\
0       & tI_{p_{n-1}} & tI_{p_{n-1}}     & \ddots & &\vdots &0 \\
0       & tI_{p_{n-1}} & 0           &   \ddots      & \ddots &\vdots & \vdots\\
\vdots  & \vdots   &\vdots & \ddots &tI_{p_{n-1}} &0 &0\\
0& tI_{p_{n-1}} &0 & \cdots & 0& tI_{p_{n-1}}&0\\
-tU_{n-2} & t(t+2)I_{p_{n-1}} & -tI_{p_{n-1}} &\cdots &-tI_{p_{n-1}} &-tI_{p_{n-1}} &0
\end{bmatrix}
\end{aligned}
$$
After rearranging the rows and columns, this yields
$$
I_{p_{n-1}}\oplus
\begin{bmatrix}
       tD_{n-1} &0      & 0 & \cdots & 0 &tI_{p_{n-2}}\\
       tI_{p_{n-1}} & tI_{p_{n-1}}     & 0 & \cdots&0 &0 \\
        tI_{p_{n-1}} & 0           &   \ddots      & \ddots &\vdots & \vdots\\
        \vdots   &\vdots & \ddots &\ddots &0 &0\\
tI_{p_{n-1}} &0 & \cdots & 0& tI_{p_{n-1}}&0\\
 t(t+2)I_{p_{n-1}} & -tI_{p_{n-1}} &\cdots &-tI_{p_{n-1}} &-tI_{p_{n-1}} &-tU_{n-2}
\end{bmatrix}
$$
Subtracting columns $2,3,\ldots,r-1$ each from the first column yields
\begin{eqnarray*}
I_{p_{n-1}}&\oplus&
\begin{bmatrix}
tD_{n-1} &0      & \cdots & \cdots & 0 &tI_{p_{n-2}}\\
 0 & tI_{p_{n-1}}     & 0 & \cdots&0 &0 \\
  \vdots & 0           &   \ddots      & \ddots &\vdots & \vdots\\
  \vdots   &\vdots & \ddots &\ddots &0 &\vdots\\
0 &0 & \cdots & 0& tI_{p_{n-1}}&0\\
t(r+t)I_{p_{n-1}} & 0 &\cdots &\cdots &0 &-tU_{n-2}
\end{bmatrix}\\
&\sim&
I_{p_{n-1}}\oplus tI_{(r-2)\cdot p_{n-1}}\oplus
\begin{bmatrix}
                    tD_{n-1}     & tI_{p_{n-2}} \\
                     t(r+t)I_{p_{n-1}}& -tU_{n-2} 
\end{bmatrix}\\
&\sim&
I_{p_{n-1}} \oplus tI_{(r-2)\cdot p_{n-1}} 
\oplus tI_{p_{n-2}} \oplus
t\left((r+t)I_{p_{n-1}}+UD_{n-1}\right) 
\\
&=&
I_{p_{n-1}}\oplus tI_{(r-2)\cdot p_{n-1}+p_{n-2}}\oplus  t(DU_{n-1}+tI_{p_{n-1}}),
\end{eqnarray*}
for $n\geq 2$.

That is, when $r \geq 2$, one has for $n\geq 2$ that
\begin{equation}
\label{Fibonacci-recursive-smith}
DU_n+tI\sim I_{p_{n-1}} \oplus (t+r)I_{(r-2)\cdot p_{n-1}+p_{n-2}} \oplus
(t+r)\left(DU_{n-1}+(t+r)I\right).
\end{equation}
This shows that $DU_n+tI$ has a Smith form over $\ZZ[t]$ by induction on $n$.

\vskip.1in
\noindent
{\sf Case 2. $r=1$}

\noindent
For $r=1$, one proceeds slightly differently.  One can check
that $DU_n+tI$ has a Smith form over $\ZZ[t]$ in the base cases of $n=0,1,2,3$ directly.
Then for $n \geq 4$, starting with \eqref{Fibonacci-starting-point} for $r=1$, one has
$$
\begin{aligned}
&DU_n+(t-1)I_{p_n}\\
&=\left[
\begin{matrix}
DU_{n-2} + tI_{p_{n-2}} & D_{n-1} \\
U_{n-2}  & (t+1)I_{p_{n-1}}
\end{matrix}
\right] \\
 &=
\left[
\begin{matrix}
DU_{n-2} + tI_{p_{n-2}} & U_{n-3} & I_{p_{n-2}} \\
D_{n-2}  & (t+1)I_{p_{n-3}}  & 0 \\
I_{p_{n-2}}  & 0      &(t+1)I_{p_{n-2}}
\end{matrix}
\right] 
\end{aligned}
$$
in which the second step used the recursive descriptions
\eqref{recursive-up-down-descriptions} of $U_n, D_n$.
Use the $I_{p_{n-2}}$ in the upper-right block to clear out the first row, 
and then the first column:
$$
\begin{aligned}
&DU_n+(t-1)I_{p_n}\\
 &\sim
\left[
\begin{matrix}
0  & 0 & I_{p_{n-2}} \\
D_{n-2}  & (t+1)I_{p_{n-3}}  & 0 \\
-(t+1)DU_{n-2}-(t^2+t-1)I_{p_{n-2}}  & -(t+1)U_{n-3}   &(t+1)I_{p_{n-2}}
\end{matrix}
\right] \\
 &\sim
\left[
\begin{matrix}
0  & 0 & I_{p_{n-2}} \\
D_{n-2}  & (t+1)I_{p_{n-3}}  & 0 \\
-(t+1)DU_{n-2}-(t^2+t-1)I_{p_{n-2}}  & -(t+1)U_{n-3}   & 0
\end{matrix}
\right]\\
 &\sim
I_{p_{n-2}} \oplus A
\end{aligned}
$$
where 
$$
A:=
\left[
\begin{matrix}
D_{n-2}  & (t+1)I_{p_{n-3}} \\
-(t+1)DU_{n-2}-(t^2+t-1)I_{p_{n-2}}  & -(t+1)U_{n-3}
\end{matrix}
\right]
$$
Next perform the column operation which subtracts from the first
column the result of right-multiplying the second column
by $D_{n-2}$.  Using the fact that $UD_{n-2}=DU_{n-2}-I_{p_{n-2}}$, this yields
a matrix
$$
A_n(t):=
\left[
\begin{matrix}
-tD_{n-2}  & (t+1)I_{p_{n-3}} \\
-t(t+2)I_{p_{n-2}}  & -(t+1)U_{n-3}
\end{matrix}
\right]
$$

It only remains to show that $A_n(t)$ has a Smith form over $\ZZ[t]$, which we
do by induction on $n$.  
Start by using the recursive descriptions \eqref{recursive-up-down-descriptions}
of $D_n, U_n$ to rewrite $A_n(t)$ as
$$
\begin{aligned}
A_n(t)
&:=
\left[
\begin{matrix}
-tU_{n-4}  & -tI_{p_{n-3}} &(t+1)I_{p_{n-3}} \\
-t(t+2)I_{p_{n-4}} & 0  & -(t+1)D_{n-3} \\
0    & -t(t+2)I_{p_{n-4}} & -(t+1)I_{p_{n-3}} 
\end{matrix}
\right] \\
&\sim
\left[
\begin{matrix}
-tU_{n-4}  & -tI_{p_{n-3}} & I_{p_{n-3}} \\
-t(t+2)I_{p_{n-4}} & 0  & -(t+1)D_{n-3} \\
0    & -t(t+2)I_{p_{n-4}} & -(t^2+3t+1)I_{p_{n-3}} 
\end{matrix}
\right]
\end{aligned}
$$
where the latter equivalence comes from adding the second column
to the third.  Now one can use the $I_{p_{n-3}}$ in the upper
right block to clear out the first row, and then clear out
the last column.  The end result shows that
$$
A_n(t)\sim
I_{n_3} \oplus
(-t)\left[
\begin{matrix}
(t+2)I_{p_{n-4}}+(t+1)DU_{n-4}  & (t+1)D_{n-3} \\
(t^2+3t+1)U_{n-4}  & (t^2+4t+3)U_{n-3}
\end{matrix}
\right].
$$
If one now subtracts from the first column the
result of right-multiplying the second column
by $U_{n-4}$, one concludes that
$$
\begin{aligned}
A_n(t)
&\sim
I_{n_3} \oplus
(-t)\left[
\begin{matrix}
(t+2)I_{p_{n-4}}  & (t+1)D_{n-3} \\
-(t+2)U_{n-4}  & (t+1)(t+3)U_{n-3}
\end{matrix}
\right]\\
&\sim
I_{n_3} \oplus
(-t) A_{n-1}(t+1).
\end{aligned}
$$
Since $A_{n-1}(t)$ (and hence also $A_{n-1}(t+1)$) 
has a Smith form over $\ZZ[t]$ by induction, so does $A_n(t)$.
\end{proof}

\section{Young's lattice and its Cartesian products}
\label{sec:Young} 

\begin{figure}[hbtp]
{
\psfrag{0}{$\varnothing$}
\includegraphics{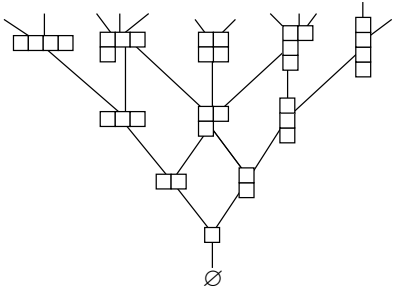}
}
\caption[]{Young's lattice.}\label{Y:lattice}
\end{figure}

The prototypical example of a $1$-differential poset is \emph{Young's lattice}, denoted by $\Y$.  
This is the set of all number partitions, ordered by inclusion of Young diagrams; see Figure~\ref{Y:lattice}. 
Its $r$-fold Cartesian product $\Y^r$ is then an $r$-differential poset.
These give the primary example of a set-up which we define next.

\begin{definition}
\label{tower-of-groups-defn}
For $r$ a positive integer, define an {\it $r$-differential tower of groups} to
be an infinite tower of finite groups
$1=G_0\leq G_1\leq G_2\leq\cdots$ such that
the branching rules for restricting irreducibles from $G_n$ to $G_{n-1}$ are
{\it multiplicity-free}, and satisfy 
\begin{equation}\label{eq:ind:res}
\Res^{G_{n+1}}_{G_n}\Ind_{G_n}^{G_{n+1}}-\Ind_{G_{n-1}}^{G_n}\Res^{G_n}_{G_{n-1}}=r\cdot{\rm id}.
\end{equation}
Here the maps in question are considered as operators 
on the $\ZZ$-module of virtual characters $\Class_\ZZ[G_n]$, that is,
the $\CC$-valued class functions on $G_n$ that are $\ZZ$-linear combinations of
irreducible characters for $G_n$.
\end{definition}

Each $r$-differential tower of groups gives rise to an $r$-differential poset $P$, whose
$n^{th}$ rank $P_n$ is labelled by $\Irr(G_n)$, the set of irreducible representations
of $G_n$.  One includes a covering relation between a $G_{n-1}$-irreducible $\chi$ and a
$G_n$-irreducible $\psi$ exactly when 
$\Res^{G_n}_{G_{n-1}} \psi$ contains $\chi$ (with multiplicity one)
as an irreducible constituent.  Identifying $\ZZ^{P_n}$ with $\Class_\ZZ[G_n]$,
equation \eqref{eq:ind:res} becomes $DU_n - UD_n = rI$.

\begin{example}
Let $G$ be a finite abelian group of order $r$, and let
$G_n$ be the wreath product $G \wr \symm_n = \symm_n[G]$ of $G$ and
the symmetric group $\symm_n$.  An irreducible 
representation of $G_n$ can naturally be indexed by an element of $\Y^r$
at rank $n$, that is, by an $r$-tuple $(\lambda^{(1)},\ldots,\lambda^{(r)})$ of partitions
whose total weight $\sum_i |\lambda^{(i)}|$ equals $n$.  Furthermore,
the branching rule from $G_n$ to $G_{n-1}$ is multiplicity free:  
one obtains a sum of irreducibles indexed by 
all $r$-tuples obtained by removing one box from any of the $\lambda^{(i)}$;
see Macdonald \cite[Chapter 1, Appendix B]{M}, Okada \cite[Corollary to Theorem 4.1]{Okada},
and Stembridge \cite[Lemma 7.1]{Stembridge}.
%
%
Thus the irreducibles and branching
in this tower of groups are described by the $r$-differential poset $\Y^r$,
and hence this is an $r$-differential tower of groups.
\end{example}

\begin{remark}
A recent result of Bergeron, Lam, and Li \cite{BergeronLamLi} implies that
every $r$-differential tower of groups that satisfies some mild extra
hypotheses will have $|G_n|=r^n n!$.  This means that these
structures are more rigid than one might expect.
\end{remark}

For the purposes of some of the proofs in this section, we
briefly review in the case $r=1$, so that $G_n=\symm_n$, some
facts about bases of $\Class_\ZZ[\symm_n]$ and their labelling
by symmetric functions as in \cite{M} and \cite[Chapter 7]{S4};
see these references for more details, as well as \cite[Remark on p. 926]{S1}

The irreducible (complex) characters of $\symm_n$ are indexed by partitions
$\lambda$ of $n$, and are associated via Frobenius' characteristic map
to the Schur functions $s_\lambda$.  The above discussion
says that the branching map $D_n$ sends the basis element indexed by $s_\lambda$
to the sum of all $s_\mu$ where $\mu$ is obtained from $\lambda$ by removing
one cell from the Ferrers diagram.  On the other hand, there is another
$\ZZ$-basis for $\Class_\ZZ[\symm_n]$ indexed by partitions $\lambda$
of $n$, given by the characters of the coset representations for Young subgroups
$\symm_\lambda:=\symm_{\lambda_1} \times \cdots \times \symm_{\lambda_\ell}$
inside $\symm_n$.  The characters of these coset representations
correspond under the Frobenius characteristic map with the symmetric functions 
$
h_\lambda := h_{\lambda_1} \cdots h_{\lambda_\ell}
$
where $h_r$ is the complete homogeneous symmetric function of degree $r$.
In this basis, since the induction map $U_n$ multiplies by $s_1=h_1$, it
sends $h_\lambda$ to $h_{(\lambda,1)}$.  Also, the branching rule 
$D_n$ sends $h_n$ to $h_{n-1}$ and then extends as a
derivation on products.  From this one can derive the following special
case of the Mackey formula: if $\lambda=(\lambda,\ldots,\lambda_\ell)\vdash n$, 
then

\begin{equation}
\label{Mackey}
DU(h_\lambda)=D (h_1 h_\lambda)
=h_\lambda+h_1\sum_{i=1}^\ell h_{(\lambda_1,\ldots,\lambda_i-1,\ldots,\lambda_\ell)}
=h_\lambda+\sum_{i=1}^\ell h_{(\lambda_1,\ldots,\lambda_i-1,\ldots,\lambda_\ell,1)}.
\end{equation}

\subsection{Proof of Conjectures~\ref{conj:numerical} and \ref{conj:free-cokernel} for $\Y^r$}

\begin{proposition}
Conjecture~\ref{conj:numerical} holds for $\Y^r$
\end{proposition}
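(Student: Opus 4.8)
The plan is to split off the case $r\geq 2$, which is immediate from an earlier proposition, and then do the actual work for $r=1$, where it becomes an elementary statement about partition counts.

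\textbf{Reduction to $r=1$.} For $r\geq 2$ there is nothing new to prove: write $\Y^r=\Y\times\Y^{r-1}$, where $\Y$ is $1$-differential and $\Y^{r-1}$ is $(r-1)$-differential with $r-1\geq 1$, so Proposition~\ref{prop:product-respects-numerical} applies verbatim and gives Conjecture~\ref{conj:numerical} for $\Y^r$; concretely, since $r\geq 2$ this is the form \eqref{second-differences-nonnegative} of the conjecture. Thus the content is entirely the case $\Y^1=\Y$, where $p_n=p(n)$ is the number of partitions of $n$, and where Conjecture~\ref{conj:numerical} (so $\delta_{r,1}=1$) amounts to the assertion that $\Delta p_n\geq \Delta p_m$ whenever $m\leq n-2$.

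\textbf{Identifying the first differences.} The first step for $\Y$ is the classical identity $\Delta p_n=p(n)-p(n-1)=q(n)$, where $q(n)$ denotes the number of partitions of $n$ all of whose parts are $\geq 2$. This is the bijection that removes (resp.\ adjoins) a single part equal to $1$, matching partitions of $n$ that use a part equal to $1$ with all partitions of $n-1$; it remains valid at the degenerate low ranks under the paper's conventions ($q(0)=1$ and $q$ vanishing in negative degrees). So the goal becomes the purely combinatorial inequality $q(n)\geq q(m)$ for all $m\leq n-2$.

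\textbf{The injection.} Since $m\leq n-2$ forces $n-m\geq 2$, the integer $n-m$ is itself an admissible part, so the map sending a partition of $m$ with all parts $\geq 2$ to the partition of $n$ obtained by adjoining one new part equal to $n-m$ is well defined and lands among partitions of $n$ with all parts $\geq 2$. It is injective, since one recovers the source by deleting one part equal to $n-m$; hence $q(m)\leq q(n)$, which finishes the proof. As a bonus one can even match the remark following Conjecture~\ref{conj:numerical}: the injection that increases the largest part of a partition of $n-1$ by $1$ is legitimate once that partition is nonempty, i.e.\ for $n\geq 3$ (the cases $n\leq 2$ being trivial), and it shows $\Delta^2 p_n=q(n)-q(n-1)\geq 0$ for $n\geq 2$, so that Young's lattice in fact satisfies the stronger second-difference condition.

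\textbf{Where the difficulty lies.} There is no serious obstacle; the only care needed is bookkeeping: translating the index-shifted, $\delta_{r,1}$-corrected inequalities of Conjecture~\ref{conj:numerical} — together with the convention $p_k=0$ for $k<0$ and the small ranks $n\leq 2$ — into the clean statement $q(n)\geq q(m)$ for $m\leq n-2$, and checking that the adjoining-a-part injection behaves correctly when the source partition is empty (the case $m=0$, where it sends the empty partition to the one-part partition of $n$, which is admissible precisely because $n\geq 2$).
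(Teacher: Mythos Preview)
Your proof is correct and follows essentially the same approach as the paper: reduce to $r=1$ via Proposition~\ref{prop:product-respects-numerical}, identify $\Delta p_n$ with partitions of $n$ having no parts equal to $1$, and then argue by an explicit injection. The only cosmetic difference is that the paper goes directly for the stronger monotonicity $\Delta p_{n-1}\leq \Delta p_n$ using the ``add $1$ to the largest part'' injection, whereas you first prove the literal $r=1$ conjecture via the ``adjoin a part of size $n-m$'' injection and then recover the stronger statement as your bonus remark.
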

\begin{proof}
By Proposition~\ref{prop:product-respects-numerical}, 
it is enough to check it for $r=1$, that is, for $\Y$.
We will show the stronger assertion that
\begin{equation}
\label{Young-has-stronger-assertion}
\Delta p_1 \leq \Delta p_2 \leq \cdots.
\end{equation}
Recall that $p_n$ counts the number of partitions of $n$.
One can inject the partitions of $n-1$ into the partitions of $n$
by adding a part of size $1$.  Hence $\Delta p_n = p_n - p_{n-1}$ 
counts the number of partitions of $n$ that contain no parts of size $1$.  

One can then inject such partitions with no parts of size $1$ for
$n-1$ into those for $n$, by adding $1$ to the largest part.
This shows $\Delta p_{n-1} \leq \Delta p_n$.
\end{proof}

\begin{proposition}\label{prop:U:Y}  
Conjecture~\ref{conj:free-cokernel} holds for $\Y^r$
\end{proposition}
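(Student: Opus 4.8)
The plan is to reduce, via Proposition~\ref{prop:product-respects-free-cokernel}, to the case $r=1$, so that it suffices to prove that $\coker(U_n)$ is free abelian for Young's lattice $\Y$ itself. For this I would exhibit an explicit $p_n \times p_n$ maximal minor of $U_n$ equal to $\pm 1$, which by Proposition~\ref{prop:det-divisors} (together with the Remark after Proposition~\ref{imp:prop}) is more than enough. Concretely, I would work with the two bases of $\Class_\ZZ[\symm_n]$ reviewed just above the statement: the Schur basis $\{s_\lambda : \lambda \vdash n\}$ (the standard basis on which $U_n$ acts by adding a box, i.e. the poset basis) and the "complete homogeneous" basis $\{h_\lambda : \lambda \vdash n\}$. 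Since $\{h_\lambda\}$ is a $\ZZ$-basis of $\Class_\ZZ[\symm_n]$, the change-of-basis matrix between $\{h_\lambda\}_{\lambda \vdash n+1}$ and $\{s_\lambda\}_{\lambda \vdash n+1}$ lies in $GL$ over $\ZZ$, so it suffices to show that the matrix of $U_n$ written with \emph{source} basis $\{h_\lambda : \lambda \vdash n\}$ and \emph{target} the Schur basis $\{s_\mu : \mu \vdash n+1\}$ has a maximal minor equal to $\pm 1$; equivalently, that $U_n$ maps the sublattice spanned by suitable $h_\lambda$'s isomorphically onto the target lattice.

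The key computational input is that $U_n(h_\lambda) = h_{(\lambda,1)}$, that is, induction simply appends a part of size $1$. So the columns of $U_n$ in the $h$-to-$h$ description are a subset of the target $h$-basis: the image of the $h$-basis at rank $n$ is exactly $\{h_\mu : \mu \vdash n+1,\ \mu \text{ has a part equal to }1\}$. The remaining rank-$(n+1)$ basis vectors are the $h_\mu$ with $\mu$ having no part of size $1$, and there are exactly $\Delta p_{n+1} = p_{n+1}-p_n$ of these (this is the same injection used in the proof of the numerical conjecture for $\Y$). Thus, in the $h$-basis at both ranks, $U_n$ is literally the inclusion of coordinate subspaces: its matrix has a single $1$ in each column, in distinct rows. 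Picking those rows gives a permutation submatrix, hence a maximal minor equal to $\pm 1$. Translating back through the unimodular change of basis from $\{h_\mu\}$ to $\{s_\mu\}$ at rank $n+1$ shows $U_n$ (in the poset/Schur basis, the basis in which Conjecture~\ref{conj:free-cokernel} is phrased) still has a maximal minor equal to $\pm 1$, so $\coker(U_n)$ is free.

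For general $r$, one invokes Proposition~\ref{prop:product-respects-free-cokernel}: since Conjecture~\ref{conj:free-cokernel} holds for $\Y$, it holds for $\Y^r = \Y \times \cdots \times \Y$. Alternatively, one can give the argument directly in $\Y^r$ using the tower of groups $G_n = G \wr \symm_n$ from the Example above, where induction again corresponds to multiplication by $h_1$ in the appropriate component; but routing through the product proposition is cleaner and avoids re-deriving the branching combinatorics.

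The main obstacle, such as it is, is purely bookkeeping: making sure the change of basis between $\{h_\lambda\}$ and $\{s_\lambda\}$ is genuinely unimodular over $\ZZ$ (it is, since $h_\lambda = \sum_\mu K_{\mu\lambda}^{-1}\cdots$ — more simply, both are $\ZZ$-bases of the same free module, the transition matrices being the Kostka matrix and its inverse, both integral), and checking that the count of $h_\mu$ without a part of size $1$ at rank $n+1$ matches $p_{n+1}-p_n$ so that the claimed submatrix really is a maximal minor. Neither point is deep; the argument is essentially the observation that "induction = append a column of size $1$" makes $U_n$ unitriangular-by-inclusion in the homogeneous basis.
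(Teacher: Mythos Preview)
Your proposal is correct and follows essentially the same route as the paper: reduce to $\Y$ via Proposition~\ref{prop:product-respects-free-cokernel}, then observe that in the $\{h_\lambda\}$-basis one has $U_n(h_\lambda)=h_{(\lambda,1)}$, so $U_n$ sends basis elements to distinct basis elements and hence has free cokernel. Your additional step of translating back to the Schur basis is unnecessary, since freeness of $\coker(U_n)$ is a basis-independent property of the map; the paper simply stops once the $h$-basis observation is made.
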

\begin{proof}
By Proposition~\ref{prop:product-respects-free-cokernel}, 
it is enough to check it for $\Y$.  But when one uses the
$\ZZ$-basis for $\Class_\ZZ(\symm_n)$ indexed by $\{h_\lambda\}$ as discussed above,
then the fact that $U_n(h_\lambda)=h_{(\lambda,1)}$ shows that $U_n$ has
free cokernel.
\end{proof}

\subsection{Ones in the Smith form of $DU+kI$ in $\Y$}

Conjecture~\ref{conj:parametrized-Smith} predicts for each integer $k$ 
the Smith normal form over $\ZZ$ for $DU+kI$; see Proposition~\ref{prop:smith-eigenvalue-relation}.
In particular, it predicts the number of {\it ones} that should appear in these Smith normal forms.
We check here that those predictions are correct for Young's lattice $\Y$.

\begin{proposition}
\label{prop:ones-right}
In Young's lattice $\Y$, for every integer $k$, 
the number of ones in the Smith normal form over $\ZZ$ of $DU_n+kI$ is
\begin{equation}
\label{predicted-ones}
\begin{cases}
p_{n-1} & \text{ for }k \neq 0,-2 \\
p_n-p_{n-1}+p_{n-2} & \text{ for }k =0,-2,
\end{cases}
\end{equation}
which agrees with the prediction from Conjecture~\ref{conj:parametrized-Smith}.
\end{proposition}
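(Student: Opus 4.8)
The plan is to verify the count in \eqref{predicted-ones} directly, by showing that the number of ones in the Smith form of $DU_n + kI$ over $\ZZ$ equals the number of invariant factors $s_i(t)$ in the predicted parametrized Smith form \eqref{explicit-Smith-entries} whose specialization $s_i(k)$ is a unit in $\ZZ$, i.e.\ equals $\pm 1$. First I would recall that for $\Y$ we have $r=1$, so the eigenvalues of $DU_n$ are $1, 2, \ldots, n+1$, with $t+(i+1)$ occurring with multiplicity $\Delta p_{n-i}$ in $\det(DU_n+tI)$ by Proposition~\ref{eig}. Under the $r=1$ case of the table following Conjecture~\ref{conj:numerical} (valid here since $\Y$ satisfies the strong second-difference inequality, as shown in \eqref{Young-has-stronger-assertion}), the predicted invariant factors of $DU_n+tI$ are: the single factor $(n+1+t)\cdot(n-1)!_{1,t} = (t+n+1)(t+n-1)(t+n-2)\cdots(t+1)$; the factors $(i+1)!_{1,t} = (t+i+1)(t+i)\cdots(t+1)$ with multiplicity $\Delta^2 p_{n-i}$ for $0 \le i \le n-1$; and the factor $1$ with multiplicity $p_{n-1}$.

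Next I would specialize each predicted factor at $t=k$ and count how many become $\pm 1$. The factor $1$ contributes its full multiplicity $p_{n-1}$ always. A factor $(i+1)!_{1,t}$ specialized at $t=k$ equals $(k+i+1)(k+i)\cdots(k+1)$, a product of $i+1$ consecutive integers; this is $\pm 1$ precisely when the product is empty or consists only of $\pm 1$'s, which for a nonempty product of consecutive integers happens exactly when that product is $1\cdot(\text{nothing})$ — impossible for length $\ge 1$ unless one of the consecutive integers... in fact a product of $i+1 \ge 1$ consecutive integers equals $\pm 1$ only if $i+1 = 1$ (empty case excluded since $i \ge 0$ means length $\ge 1$) — so one must look more carefully: the product $(k+1)(k+2)\cdots(k+i+1)$ is a unit iff every factor is $\pm1$ and there is an even number of $-1$'s, which forces $i+1 \le 2$ and pins down $k$. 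For the exceptional factor $(t+n+1)(t+n-1)\cdots(t+1)$ at $t=k$, similarly I would analyze when this product of (not-quite-consecutive) integers is a unit. The key point is that for $k \ne 0, -2$, \emph{none} of the nontrivial predicted factors specialize to a unit, so the unit count is exactly $p_{n-1}$; whereas for $k=0$ the factors $(i+1)!_{1,t}|_{t=0} = (i+1)!$ are never units for $i\ge 1$ but... here I need instead to observe directly that at $k=0$ the smallest nontrivial factors have a constant term killed — actually the cleaner route at $k=0,-2$ is: $\Delta^2 p_n \ge 0$ and $(1)!_{1,0} = 1$, so the factors $(1)!_{1,t}=t+1$ specialized at $t=0$ give $1$, contributing $\Delta^2 p_n = \Delta p_n - \Delta p_{n-1}$ extra ones; adding $p_{n-1} + (\Delta p_n - \Delta p_{n-1}) = p_{n-1} + (p_n - p_{n-1}) - (p_{n-1}-p_{n-2}) = p_n - p_{n-1} + p_{n-2}$, matching the claim. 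The case $k=-2$ is symmetric via $t \mapsto t-2$ sending the eigenvalue $1$ to $-1$.

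The final step is to establish that this predicted count is actually \emph{achieved} by the true Smith form of $DU_n+kI$ over $\ZZ$, without assuming Conjecture~\ref{conj:parametrized-Smith}. The number of ones in a Smith form over $\ZZ$ of a matrix $M$ equals the largest $d$ such that some $d\times d$ minor of $M$ is $\pm1$, equivalently $\operatorname{rank}$ of $M$ over every $\ZZ/p\ZZ$ minus the contributions... more precisely, the number of ones is $p_n$ minus the number of invariant factors exceeding $1$, and the number of ones equals $d$ where $d_d(M) := \gcd$ of $d\times d$ minors is $1$ but $d_{d+1}(M) \ne 1$; I would pin this down using Proposition~\ref{prop:det-divisors} and the explicit basis $\{h_\lambda\}$ for $\Class_\ZZ[\symm_n]$. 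In that basis $DU_n$ has the explicit combinatorial form given by the Mackey formula \eqref{Mackey}, namely $DU_n(h_\lambda) = h_\lambda + \sum_{i=1}^{\ell(\lambda)} h_{(\lambda_1,\ldots,\lambda_i-1,\ldots,\ell(\lambda),1)}$, so $DU_n + kI$ acts by $h_\lambda \mapsto (k+1)h_\lambda + (\text{terms with strictly more parts})$. Ordering the basis $\{h_\lambda : \lambda \vdash n\}$ by increasing number of parts, the matrix of $DU_n+kI$ is block upper-triangular with diagonal blocks scalar, and the off-diagonal structure lets me identify an explicit unit minor of the predicted size: for $k\ne 0,-2$ the submatrix indexed by $\{\lambda : \lambda \text{ has a part equal to }1\}$ (of size $p_{n-1}$) should be lower-unitriangular after a suitable reordering using the "add a part $1$" bijection, giving a $p_{n-1}\times p_{n-1}$ minor equal to $\pm1$; and an argument bounding the $\gcd$ of larger minors (using that $DU_n+kI$ is conjugate to a matrix whose relevant quotient is multiplication by $k+1$ or $k+2$ on a lattice of the appropriate rank, via the eigenspace structure from Proposition~\ref{eig}) shows no larger unit minor exists. \textbf{The main obstacle} is precisely this last point: producing the explicit unit minor of the right size and — harder — proving no unit minor is larger, since the latter is essentially a $p$-adic rank computation for the primes $p$ dividing the relevant eigenvalues, and one must leverage the $\{h_\lambda\}$-basis combinatorics of \eqref{Mackey} rather than the still-open Conjecture~\ref{conj:parametrized-Smith}. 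I expect the upper bound to follow by reducing $DU_n+kI$ modulo a prime $p \mid (k+1)$ (resp.\ $p\mid(k+2)$) and computing its rank over $\mathbb{F}_p$ via the triangular structure, matching $p_n$ minus the predicted multiplicity of the non-unit factors, which then forces the number of ones to be exactly as claimed.
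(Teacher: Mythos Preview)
Your outline correctly identifies the $\{h_\lambda\}$-basis block form \eqref{Y:DU+kI} as the source of the \emph{upper} bound $\ones(DU_n+kI)\le p_{n-1}$ when $k\ne 0,-2$: since the first $p_n-p_{n-1}$ rows are divisible by $k+1$, every minor of size $p_{n-1}+1$ is divisible by $k+1$, hence not a unit. That part matches the paper exactly.

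The genuine gap is the \emph{lower} bound. Your suggestion to extract a $p_{n-1}\times p_{n-1}$ unit minor from the $\{h_\lambda\}$-matrix using ``partitions with a part $1$'' does not work as stated: that index set picks out precisely the bottom-right block $DU_{n-1}+(k+1)I_{p_{n-1}}$ of \eqref{Y:DU+kI}, which is not unitriangular and whose determinant depends on $k$. The paper's trick is to switch to a \emph{second} basis, the Schur/irreducible basis $\{s_\lambda\}$, where $DU_n+kI$ visibly contains a $p_{n-1}\times p_{n-1}$ lower-unitriangular submatrix with rows indexed by $\{(\mu_1+1,\mu_2,\ldots,\mu_\ell):\mu\vdash n-1\}$ and columns by $\{(\mu_1,\ldots,\mu_\ell,1):\mu\vdash n-1\}$, under lexicographic order. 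This submatrix has determinant $1$ for \emph{every} $k$, giving $\ones(DU_n+kI)\ge p_{n-1}$ uniformly; the two-basis maneuver is the missing idea.

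For the exceptional values $k=0,-2$, your sketch is loose. The paper does not analyze unit minors of size $p_n-p_{n-1}+p_{n-2}$ directly; instead it reads off from \eqref{Y:DU+kI} that when $k+1=\pm 1$ the top-left block is already a unit scalar, so
\[
\ones(DU_n+kI)=(p_n-p_{n-1})+\ones(DU_{n-1}+(k+1)I),
\]
and then closes the recursion: for $k=0$ one lands on $\ones(DU_{n-1}+I)=p_{n-2}$ by the generic case just proved, while for $k=-2$ one lands on $\ones(DU_{n-1}-I)=\ones(UD_{n-1})=p_{n-2}$ because $UD_{n-1}$ has only zeroes and ones in its Smith form by Propositions~\ref{prop:U:Y} and~\ref{prop:free-cokernel-characterization}. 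This recursive reduction, together with the already-established free-cokernel fact for $U_{n-1}$, is what replaces the ad hoc minor analysis you were attempting.
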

\begin{proof}
Let $\ones(A)$ for an integer matrix $A$ denote the number of
ones in its Smith normal form over $\ZZ$. 

First we check that the numbers given in \eqref{predicted-ones}
are the same as the ones predicted by the conjecture.
Proposition~\ref{prop:smith-eigenvalue-relation} predicts that 
$\ones(DU_n+kI)$ should be the difference between $p_n$
and the highest multiplicity attained by a non-unit (that is, not $\pm 1$) eigenvalue
of $DU_n+kI$.  Because $\Y$ satisfies the inequalities \eqref{Young-has-stronger-assertion},
Proposition~\ref{eig} shows that the highest multiplicity
of a non-unit eigenvalue will either be the multiplicity 
\begin{enumerate}
\item[$\bullet$]
$p_n-p_{n-1}$ attained by $k+1$ whenever $k+1 \neq \pm 1$, or
\item[$\bullet$]
$p_{n-1}-p_{n-2}$ attained by $k+2$.
\end{enumerate}  
Thus the predictions agree with \eqref{predicted-ones}.

To show that the numbers in \eqref{predicted-ones} actually match $\ones(DU_n+kI)$,
we express $DU_n+kI$ with respect to two different (ordered) bases of $\Class_\ZZ(\symm_n)$.

First use the $\{s_\lambda\}$ or irreducible character basis, with the partitions
ordered lexicographically.  One can see that it contains a $p_{n-1} \times p_{n-1}$ lower
unitriangular submatrix whose rows (resp. columns) are indexed by the
partitions $(\mu_1,\ldots,\mu_\ell,1)$ (resp. $(\mu_1+1,\mu_2,\mu_3,\ldots,\mu_\ell)$)
where $\mu$ runs through all partitions of $n-1$.  For example,
when $n=5$, the matrix looks as follows, with the rows and columns of the
aforementioned submatrix indicated by asterisks:
$$
\bordermatrix{
      &  \scriptstyle 5   & \scriptstyle 41^*   &\scriptstyle 32 &\scriptstyle 311^* &\scriptstyle 221^* 
           &\scriptstyle 2111^* &\scriptstyle 11111^* \cr
\scriptstyle 5^*     &  k+2   & 1    & 0  & 0     & 0     & 0     & 0\cr
\scriptstyle 41^*    &  1   & k+3    & 1  & 1     & 0     & 0     & 0\cr
\scriptstyle 32^*    &  0   & 1    & k+3  & 1     & 1     & 0     & 0\cr
\scriptstyle 311^* &  0   & 1    & 1  & k+3     & 1     & 1     & 0\cr 
\scriptstyle 221 &  0   & 0    & 1  & 1     & k+3     & 1     & 0\cr
\scriptstyle 2111^* &  0   & 0    & 0  & 1     & 1     & k+3     & 1\cr
\scriptstyle 11111   &  0   & 0    & 0  & 0     & 0     & 1     & k+2 
}
$$
Note that this shows, via Proposition~\ref{prop:det-divisors}, that for
any integer $k$ one has the inequality $\ones(DU_n+kI) \geq p_{n-1}$.

Alternatively, one can express $DU_n+kI$ 
in the $\{h_\lambda\}$-basis, ordered so that the partitions
$\lambda$ having no part of size $1$ come first.  Then
equation~\ref{Mackey} shows that it will have this block upper-triangular form:
\begin{equation}
\label{Y:DU+kI}
\left[\begin{matrix} 
(k+1)I_{p_n-p_{n-1}} & 0 \\
& \\
 * & [DU_{n-1}]+(k+1)I_{p_{n-1}}
\end{matrix}\right].
\end{equation}
For $n=5$, this looks as follows:
\[
\bordermatrix{
      &  \scriptstyle 5   &\scriptstyle 32   &\scriptstyle 41 &\scriptstyle 2^2 1 &\scriptstyle 3 1^2 &\scriptstyle 2 1^3 &\scriptstyle 1^5 \cr
\scriptstyle 5    &   k+1   &      &    &       &       &       &  \cr
\scriptstyle 32    &  0   & k+1    &    &       &       &       &  \cr
\scriptstyle 41    &  1   & 0    & k+2  &       &       &       &  \cr
\scriptstyle 2^2 1 &  0   & 1    & 0  & k+2     &       &       &  \cr 
\scriptstyle 3 1^2 &  0   & 1    & 1  & 0     & k+3     &       &  \cr
\scriptstyle 2 1^3 &  0   & 0    & 0  & 2     & 1     & k+4     &  \cr
\scriptstyle 1^5   &  0   & 0    & 0  & 0     & 0     & 1     & k+6 
},
\]
Since the first $p_n-p_{n-1}$ rows of the matrix for $DU_n+kI$ in \eqref{Y:DU+kI} have every
entry divisible by $k+1$, any minor subdeterminant of size $p_{n-1}+1$ will be divisible by
$k+1$.  Consequently, by Proposition~\ref{prop:det-divisors},
the Smith entry $s_{p_{n-1}+1} \neq 1$ as long as $k+1$ is not a unit
of $\ZZ$, that is, as long as $k \neq 0,-2$.  This shows
$\ones(DU_n+kI) \leq p_{n-1}$ in this case, and hence
$\ones(DU_n+kI) = p_{n-1}$ in light of the above lower bound.

Now if $k=0$ or $-2$ then \eqref{Y:DU+kI} shows
$$
\ones(DU+kI)=p_n-p_{n-1}+\ones(DU_{n-1}+(k+1)I).
$$
Therefore it only remains to check that for $k=0$ and for $k= -2$ 
one has 
$
\ones(DU_{n-1}+(k+1)I)=p_{n-2}:
$
\begin{enumerate}
\item[$\bullet$] For $k=0$ this was just proven above, replacing
$n$ by $n-1$ and taking $k=1$.
\item[$\bullet$]
For $k=-2$ one has 
$$
\ones(DU_{n-1}-I)=\ones(UD_{n-1})=p_{n-2}
$$
where the second equality holds because Propositions~\ref{prop:U:Y} and 
\ref{prop:free-cokernel-characterization} tell us that $UD_{n-1}$ has only
zeroes and ones in its Smith form.
\end{enumerate}
\end{proof}

\begin{remark}
It seems likely that one can extend the statement of Proposition~\ref{prop:ones-right}
and its proof from $\Y$ to $\Y^r$, using the analogues of the $\{h_\lambda\}$ 
basis for $\Class_\ZZ(G_n)$ where $G_n=\ZZ/r\ZZ \wr \symm_n$, as discussed in
\cite[Chapter 1, Appendix B, \S8]{M}.
However, we have not seriously attempted to carry this out.  

One slight annoyance is that the 
general prediction implied by Conjecture~\ref{conj:parametrized-Smith} 
for the number of ones when $r \geq 2$ will have 
a further special case in its statement:  when $r=2$ and $k=-3$,
the two eigenvalues $r+k=-1$ and $2r+k=+1$ achieving the highest multiplicities
{\it both} turn out to be units in $\ZZ^\times$.  
Thus Conjecture\ref{conj:parametrized-Smith} predicts, via Proposition~\ref{prop:smith-eigenvalue-relation},
that
$$
\ones(DU_n+kI)
\begin{cases}
p_{n-1} & \text{ for }r+k \neq \pm 1 \\
p_n-p_{n-1}+p_{n-2} & \text{ for }r+k = \pm 1 \text{ and }(r,k) \neq (2,-3)\\
p_n-p_{n-2}+p_{n-3} & \text{ for }(r,k)=(2,-3).
\end{cases}
$$
\end{remark}

\begin{remark}
Is there hope to extend the previous results to $r$-differential towers of group algebras,
or to $r$-differential self-dual Hopf algebras, as discussed in \cite{BergeronLamLi}?

In fact, Conjecture~\ref{conj:free-cokernel} is essentially
asking for something like an $\{h_\lambda\}$ basis in these contexts.
$U_n$ having free cokernel is equivalent to saying that there exists
a choice of bases in the source and target so that that
map $U_n: \ZZ^{P_n} \rightarrow \ZZ^{P_{n+1}}$ sends basis elements
into basis elements, as in $U_n(h_{\lambda})=h_{(\lambda,1)}$.
\end{remark}

\subsection{Last Smith entry of $DU+kI$ in $\Y^r$ and towers of group algebras}
\label{tower-section}

We wish to show that for $\Y^r$, and more generally, for any $r$-differential poset
coming from a tower of finite groups, the last Smith invariants $s_{p_n}$ for either of
the operators $DU_n+kI$ or $UD_n+kI$ match the prediction provided by Conjecture~\ref{conj:parametrized-Smith}
via Proposition~\ref{prop:smith-eigenvalue-relation}.

The idea is to use Corollary~\ref{cor:last-smith}.  We start with some facts
about the spectrum and inverse of $UD$ when $D:=\Res^G_H$ and $U=\Ind^G_H$ are restriction and induction of 
class functions for {\it any} pair of nested finite groups $H \subseteq G$.  Part (i)
of the next proposition is a generalization of Stanley's \cite[Proposition 4.7]{S1}
that was first observed by Okada \cite[Appendix, Theorem A]{Okada}.
%
%

\begin{proposition}
\label{prop:ind-res-eigenvalues}
Let $G$ be a finite group, and $H \subseteq G$ a subgroup.  
\begin{enumerate}
\item[(i)]
The operator $UD$ on $\Class_\ZZ(G)$ has a complete system of orthogonal eigenvectors
given by
$$
v^{(g)}:=\sum_{\chi \in \Irr(G)} \chi(g) \cdot \chi
$$
as $g$ ranges through a set of representatives for the $G$-conjugacy classes.
The associated eigenvalue equation is
\begin{equation}
\label{eigenvalue-equation}
DU v^{(g)} = U(\triv_H)(g) \cdot v^{(g)}.
\end{equation}
\item[(ii)] 
Assume the integer $k$ makes the operator $A:=UD+kI$ nonsingular, that is,
the class function $\phi$ on $G$ defined by
$$
\phi(g):=U(\triv_H)(g)+k
$$
is nonvanishing.  Then the matrix for the inverse operator $A^{-1}$ has the
following $(\chi,\psi)$-entry for any $G$-irreducibles $\chi,\psi$:
$$
\begin{aligned}
A^{-1}_{\chi,\psi} 
  &= \left\langle \chi, \frac{\psi}{\phi} \right\rangle_G \\
\left( \right.
  &= \left\langle \chi \cdot \overline{\psi}, \frac{1}{\phi} \right\rangle_G 
  = \left\langle \frac{\chi}{\phi}, \psi \right\rangle_G 
  = A^{-1}_{\psi,\chi} 
\left. \right)
\end{aligned}
$$
where $\langle -, - \rangle_G$ denotes the usual inner product for
class functions on $G$.
\item[(iii)] Let $s_{|\Irr(G)|}$ be the last Smith normal form entry over $\ZZ$
for $UD+kI$. Then $s_{|\Irr(G)|}=0$ when $UD+kI$ is singular,
and when $UD+kI$ is nonsingular, $s_{|\Irr(G)|}$, it is the smallest positive integer
$s$ for which the class function $s \cdot \frac{1}{\phi}$ is
a virtual character (that is, an element of $\Class_\ZZ(G)$).
\end{enumerate}
\end{proposition}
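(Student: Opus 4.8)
The plan is to reduce everything to the single observation that the operator $UD=\Ind^G_H\circ\Res^G_H$ on $\Class_\CC(G)$ is just multiplication by the permutation character $\pi:=U(\triv_H)=\Ind^G_H(\triv_H)$. This is the projection (push--pull) formula $\Ind^G_H(\alpha\cdot\Res^G_H\beta)=\Ind^G_H(\alpha)\cdot\beta$, specialized to $\alpha=\triv_H$, which gives $UD(\beta)=\pi\cdot\beta$ for every class function $\beta$ on $G$. Granting this, part (i) follows from the column orthogonality relations: one computes $\langle v^{(g)},v^{(h)}\rangle_G=\sum_{\chi\in\Irr(G)}\chi(g)\overline{\chi(h)}$, which equals $|C_G(g)|$ when $g,h$ are $G$-conjugate and $0$ otherwise, so the $v^{(g)}$ form a complete orthogonal system; moreover each $v^{(g)}$ is supported, as a class function, on a single conjugacy class, so multiplication by $\pi$ scales it by the value of $\pi$ on that class, which is $\pi(g)=U(\triv_H)(g)$ since a permutation character takes equal values on $g$ and $g^{-1}$. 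This is exactly the asserted eigenvalue equation for $UD$.

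For part (ii), once the integer $k$ makes $\phi(g)=\pi(g)+k$ nowhere zero on $G$, the class function $1/\phi$ is well defined, the operator $A=UD+kI$ is multiplication by $\phi$, and its inverse is multiplication by $1/\phi$. Since $\Irr(G)$ is an orthonormal basis of $\Class_\CC(G)$, the $(\chi,\psi)$-entry of the matrix of $A^{-1}$ is the coefficient of $\chi$ in $A^{-1}\psi=\psi/\phi$, namely $\langle\chi,\psi/\phi\rangle_G$. The alternative expressions come from sliding $\phi$ through the Hermitian form and using that $\phi$, hence $1/\phi$, is real-valued, e.g. $\langle\chi,\psi/\phi\rangle_G=\langle\chi\overline{\psi},1/\phi\rangle_G=\langle\chi/\phi,\psi\rangle_G$; the symmetry $A^{-1}_{\chi,\psi}=A^{-1}_{\psi,\chi}$ then follows because $A^{-1}$ is a real matrix (it inverts the integer matrix representing $UD+kI$ in the $\Irr(G)$-basis, $UD$ being self-adjoint for $\langle-,-\rangle_G$ by Frobenius reciprocity and represented there by a nonnegative integer matrix).

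For part (iii), when $UD+kI$ is singular its last Smith entry is $0$ by convention. When it is nonsingular, I invoke the fact recorded in Corollary~\ref{cor:last-smith}: for a nonsingular integer matrix $A$, the last Smith entry $s_n$ is the least positive integer $s$ for which $sA^{-1}$ has integer entries. In the $\Irr(G)$-basis, $sA^{-1}$ has integer entries precisely when $s\langle\chi,\psi/\phi\rangle_G\in\ZZ$ for all $\chi,\psi\in\Irr(G)$, i.e. precisely when $s\psi/\phi$ is a virtual character for every $\psi\in\Irr(G)$. That family of conditions collapses to the single condition $s\cdot\frac{1}{\phi}\in\Class_\ZZ(G)$: the case $\psi=\triv_G$ gives one implication, and the other holds because $\Class_\ZZ(G)$ is a subring of $\Class_\CC(G)$, so $s\psi/\phi=\psi\cdot\bigl(s\cdot\frac1\phi\bigr)$ stays in $\Class_\ZZ(G)$ once $s\cdot\frac1\phi$ does. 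Hence $s_{|\Irr(G)|}$ is the smallest positive integer $s$ making $s\cdot\frac{1}{\phi}$ a virtual character.

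I expect parts (i) and (ii) to be entirely routine once the projection-formula identification of $UD$ with multiplication by $\pi$ is in hand; the point requiring the most care is part (iii), where one must cite the ``denominator of the inverse'' description of the last Smith entry correctly, match up integrality of every matrix entry in the $\Irr(G)$-basis with being a virtual character, and use the ring structure of $\Class_\ZZ(G)$ to pass from the condition over all $\psi\in\Irr(G)$ to the single condition for $\psi=\triv_G$.
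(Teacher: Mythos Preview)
Your proof is correct and follows the same approach as the paper, with parts (i) and (iii) matching essentially verbatim (the paper simply cites Okada for (i) and gives the identical reduction via Corollary~\ref{cor:last-smith} and the ring structure of $\Class_\ZZ(G)$ for (iii)). Your framing of $UD$ as pointwise multiplication by $\pi=\Ind^G_H(\triv_H)$ via the projection formula is a mild streamlining of part (ii): the paper instead writes down the candidate matrix $B_{\chi,\psi}=\langle\chi,\psi/\phi\rangle_G$ and checks directly that $Bv^{(g)}=\phi(g)^{-1}v^{(g)}$ on each eigenvector, which amounts to the same computation once unwound.
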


\begin{proof}  
For (i), see the very simple proof in Okada \cite[Appendix, Theorem A]{Okada}.

For (ii), note that $A:=UD+kI$ is diagonalizable, with
the same complete system of eigenvectors $v^{(g)}$ having eigenvalues
$\phi(g)=U \triv_H(g) + k$.  Since {\it any} class function $\alpha$ on $G$ 
satisfies
$$
\sum_\chi \langle \chi, \alpha \rangle_G \cdot \chi(g) = \alpha(g),
$$
one can apply this to the class function $\alpha=\frac{\psi}{\phi}$
where $\psi$ is some $G$-irreducible character, and conclude that
$$
\sum_\chi \left\langle \chi, \frac{\psi}{\phi} \right\rangle_G \chi(g) = \frac{\psi(g)}{\phi(g)}.
$$
In other words, letting $B$ be the square matrix with rows and columns indexed by
$G$-irreducibles, and having entries
$$
B_{\chi,\psi}:= \left\langle \chi, \frac{\psi}{\phi} \right\rangle_G,
$$
one concludes that, for each $g$ in $G$,
$$
\sum_\chi B_{\chi,\psi} \cdot (v^{(g)})_\chi = \phi(g)^{-1} \cdot (v^{(g)})_\psi.
$$
In other words, $B v^{(g)} = \phi(g)^{-1} v^{(g)}$.  Since this is true
for every $g$ in $G$, one concludes that $B=A^{-1}$.

For (iii), note that Corollary~\ref{cor:last-smith} and part (ii) above
imply that $s_{|\Irr(G)|}$ is the smallest positive integer that makes
$$
s \left\langle \chi \cdot \overline{\psi}, \frac{1}{\phi} \right\rangle_G
=
\left\langle \chi \cdot \overline{\psi}, \frac{s}{\phi} \right\rangle_G
$$ 
an integer for all $G$-irreducibles $\chi, \psi$.  By taking $\psi=\triv$, one can see that
it is necessary for $s$ to make $\langle \chi , \frac{s}{\phi} \rangle_G$ an integer
for all $G$-irreducibles, that is, $\frac{s}{\phi}$ is a virtual character.  However,
this is also a sufficient condition on $s$, 
since $\chi \cdot \overline{\psi}$ is the character of a 
genuine (and hence also virtual) $G$-representation.
\end{proof}

We wish to apply the previous proposition with $H=G_{n-1}$ and $G=G_n$
in an $r$-differential tower of groups $1=G_0 < G_1 < \cdots$
as in Definition~\ref{tower-of-groups-defn}.  In this case, there is a nice
recurrence for the class function $\phi=\frac{1}{U(\triv_{n-1})+k}$ on $G_n$ that appears
in the proposition, where here $\triv_n$ is the 
trivial character of $G_n$, and $U, D$ denote induction $\Ind_{G_{n-1}}^{G_n}$
and restriction $\Res^{G_n}_{G_{n-1}}$ of class functions.  In particular,
transitivity of induction/restriction means that $U^j=\Ind_{G_{n-j}}^{G_n}$
and $D^j=\Res_{G_{n-j}}^{G_n}$ for all $j$ and $n$.

\begin{proposition}\label{eq:sum}
In an $r$-differential tower of groups, one has for every $n$ and every $k$ in $\CC$
the following equality among class functions in $\Class(G_n)$:
\[
\frac{1}{U(\triv_{n-1})+r+k}=\sum_{j=0}^n (-1)^j\frac{U^j(\triv_{n-j})}{(j+1)!_{r,k}}.
\]
where we recall that $j!_{r,k}:=(r+k) (2r+k) (3r+k) \cdots (jr+k)$.
\end{proposition}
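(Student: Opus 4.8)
### Proof proposal

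\textbf{Strategy.} The plan is to prove the identity by induction on $n$, producing along the way a recurrence for $U(\triv_{n-1})$ that links consecutive levels of the tower. The engine is the defining relation \eqref{eq:ind:res}, rewritten as $DU_n = UD_n + r\cdot\mathrm{id}$, together with the transitivity $U^j = \Ind_{G_{n-j}}^{G_n}$, $D^j = \Res_{G_{n-j}}^{G_n}$. The key elementary observation is that $D\,\triv_n = \triv_{n-1}$ (restriction of the trivial character is trivial), so applying $D$ to $U^j(\triv_{n-j})$ and peeling off one $U$ gives a handle on how the summands on the right-hand side transform under the operators in play.

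\textbf{Main steps.} First I would establish the base case $n = 0$: there $G_0 = 1$, $\triv_{-1}$ is vacuous, the sum on the right collapses to the single term $j = 0$, namely $\frac{\triv_0}{(r+k)}$, and $U(\triv_{-1})$ should be read as $0$ (there is no rank $-1$), so both sides equal $\frac{1}{r+k}$. Second, for the inductive step I would let $\psi_n := U(\triv_{n-1}) + r + k \in \Class(G_n)$ and aim to show that $\psi_n$ times the claimed right-hand side equals $\triv_n$. Multiplying out, I need
\[
\bigl(U(\triv_{n-1}) + r + k\bigr)\sum_{j=0}^n (-1)^j\frac{U^j(\triv_{n-j})}{(j+1)!_{r,k}} = \triv_n.
\]
The term $(r+k)$ times the $j$-th summand, plus $U(\triv_{n-1})$ acting on class functions via pointwise multiplication, is awkward; the cleaner route is to instead compare with the level $n-1$ identity after applying $U$ and then using the $r$-differential relation. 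Concretely, apply $U = \Ind_{G_{n-1}}^{G_n}$ to the identity at level $n-1$; since $U$ commutes with the operators $U^j$ only after a reindex, I would track how $U\bigl(\frac{1}{U(\triv_{n-2})+r+k}\bigr)$ relates to $\frac{1}{U(\triv_{n-1})+r+k}$ using the factorizations $(j+1)!_{r,k} = (r+k)\cdot j!_{r,k}\big|_{k \mapsto k+r}$ — no wait, more precisely $(j+1)!_{r,k} = \bigl((j+1)r+k\bigr)\cdot j!_{r,k}$, which is the recursion that makes the telescoping work. The shift $k \mapsto k+r$ that turns $DU$-statements into $UD$-statements (noted right after Conjecture~\ref{conj:parametrized-Smith}) is exactly what converts $\frac{1}{U(\triv_{n-1})+r+k}$ into the form needed to invoke the inductive hypothesis.

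\textbf{The cleanest formulation.} I expect the slickest proof clears denominators once and for all: set $f_n(k) := (n+1)!_{r,k}$ and show by induction that
\[
\bigl(U(\triv_{n-1}) + r + k\bigr) \cdot \sum_{j=0}^n (-1)^j \frac{f_n(k)}{(j+1)!_{r,k}}\, U^j(\triv_{n-j}) = f_n(k)\cdot \triv_n
\]
as an identity of class functions whose coefficients are polynomials in $k$. Here $\frac{f_n(k)}{(j+1)!_{r,k}} = \bigl((j+2)r+k\bigr)\bigl((j+3)r+k\bigr)\cdots\bigl((n+1)r+k\bigr)$ is a genuine polynomial, so everything lives in $\Class(G_n)[k]$ and no division is needed. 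Expanding the left side, the coefficient of $U^j(\triv_{n-j})$ for $1 \le j \le n$ receives two contributions: $(r+k)$ times the $j$-th term, and $U(\triv_{n-1})\cdot(\text{something})$ — but $U(\triv_{n-1}) \cdot U^{j-1}(\triv_{n-j+1})$ is \emph{not} simply $U^j(\triv_{n-j})$, since induction does not interact with pointwise product that way. This is the crux, and I suspect the genuine mechanism is instead the operator identity $U^j(\triv_{n-j}) \cdot (\text{class fn }\alpha)$ handled via Frobenius reciprocity, or more likely that the authors use the Mackey-type relation $DU^{j} = U^{j} D + (\text{lower, governed by }r)$ derived by iterating \eqref{eq:ind:res}, so that $D U^j(\triv_{n-j}) = U^j D(\triv_{n-j}) + (\text{correction}) = U^j(\triv_{n-j-1}) + jr\cdot U^{j-1}(\triv_{n-j+1})$ after collecting. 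Then applying $D$ to the whole claimed identity and matching against the level-$(n-1)$ identity with $k$ shifted, combined with the base case, forces the result.

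\textbf{Main obstacle.} The hard part will be pinning down exactly how $U(\triv_{n-1})$ (as a multiplication operator on $\Class(G_n)$) interacts with the tower structure — i.e., deriving the right iterated-Mackey recurrence $DU^{j}(\triv_{n-j}) = U^{j}(\triv_{n-j-1}) + jr\, U^{j-1}(\triv_{n-j+1})$ (or whatever its precise form is) from \eqref{eq:ind:res} by induction on $j$. Once that recurrence is in hand, the telescoping against $(j+1)!_{r,k} = \bigl((j+1)r + k\bigr)\cdot j!_{r,k}$ is a routine bookkeeping computation, and the identity falls out. I would also double-check the $r=1$, small-$n$ cases numerically against Young's lattice, where $U(\triv_{n-1}) = DU(h_n) - \triv_n$ computations from \eqref{Mackey} give a concrete sanity check.
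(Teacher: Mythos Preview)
Your inductive framework is right, and so is the observation that the shift $k \mapsto k+r$ is what links consecutive levels: indeed $(j+1)!_{r,k} = (r+k)\cdot j!_{r,k+r}$, and this is exactly how the paper reindexes the sum. But you are missing the one tool that makes the inductive step close cleanly, and your fallback approach has real problems.

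The key fact you need is the projection formula (a consequence of Frobenius reciprocity):
\[
\Ind_H^G(\psi \cdot \Res^G_H \phi) = (\Ind_H^G \psi)\cdot \phi,
\]
for class functions $\psi$ on $H$ and $\phi$ on $G$. You brush past this (``handled via Frobenius reciprocity'') and then pivot to an iterated-Mackey recurrence, but the projection formula is precisely what answers your own question of how $U\bigl(\frac{1}{U(\triv_{n-2})+r+k}\bigr)$ relates to $\frac{1}{U(\triv_{n-1})+r+k}$. Combined with the observation that
\[
D\!\left(\frac{1}{U(\triv_n)+r+k}\right) = \frac{1}{UD(\triv_n)+2r+k} = \frac{1}{U(\triv_{n-1})+2r+k}
\]
(from $DU = UD + rI$), one gets $U\bigl(\frac{1}{U(\triv_{n-1})+2r+k}\bigr) = U\bigl(\triv_n \cdot D(\frac{1}{U(\triv_n)+r+k})\bigr) = \frac{U(\triv_n)}{U(\triv_n)+r+k}$. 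After that the algebra is a two-line telescoping; this is exactly the paper's proof.

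Your preferred route---apply $D$ to the claimed identity and match against level $n-1$---does not close as stated. First, your recurrence has an index slip: iterating $DU = UD + rI$ gives $DU^j = U^jD + jr\,U^{j-1}$ as operators, so $DU^j(\triv_{n-j}) = U^j(\triv_{n-j-1}) + jr\,U^{j-1}(\triv_{n-j})$, not $U^{j-1}(\triv_{n-j+1})$. Second, and more seriously, $D$ is surjective but not injective, so showing $D(\text{LHS}) = D(\text{RHS})$ does not prove $\text{LHS} = \text{RHS}$; you would need an additional argument to kill the discrepancy in $\ker D$, and you have not supplied one. The ``clear denominators and multiply out'' approach founders for exactly the reason you identify: $U(\triv_{n-1})\cdot U^{j-1}(\triv_{n-j+1})$ is not $U^j(\triv_{n-j})$ in general, and without the projection formula there is no way to simplify that product.
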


\begin{proof}  
We begin with two simple facts.  First note that 
\eqref{eq:ind:res} implies
\begin{equation}
\label{first-simple-fact}
\begin{aligned}
D \left( \frac{1}{U(\triv_n)+r+k } \right)
&=  \frac{1}{DU(\triv_n)+r+k } \\
&=  \frac{1}{UD(\triv_n)+2r+k } \\
& = \frac{1}{U(\triv_{n-1})+2r+k}.
\end{aligned}
\end{equation}

Next, note that if $G$ is a finite group, and $H$ a subgroup, 
then for any class functions $\psi, \phi$ on $H, G$, respectively,
one can use Frobenius reciprocity to show
\begin{equation}
\label{Frob-recip-fact}
\Ind_H^G(\psi\cdot\Res^G_H\phi)=(\Ind_H^G\psi)\cdot\phi.
\end{equation}

We now show the asserted equality by induction on $n$.  In the base case $n=0$, one can
see that both sides are the constant class function $\frac{1}{r+k}$.  

In the inductive step, assume that the assertion holds for $n$, and manipulate
the sum on the right side of the proposition for $n+1$ as follows:
$$
\begin{aligned}
\sum_{j=0}^{n+1} (-1)^j \frac{U^j(\triv_{n+1-j})}{(j+1)!_{r,k}}
&=\frac{1}{r+k}-\sum_{j=1}^{n+1}(-1)^j\frac{U^j(\triv_{n+1-j})}{(j+1)!_{r,k}}\\
&\overset{(\ell:=j-1)}{=}
   \frac{1}{r+k}\left( 1 -U \sum_{\ell=0}^{n}(-1)^\ell \frac{U^\ell(\triv_{n-\ell})}{(\ell+1)!_{r,k+r}} \right)\\
&\overset{\text{(induction)}}{=}\frac{1}{r+k}\left( 1 -U \left( \frac{1}{U(\triv_{n-1})+2r+k}\right) \right)\\
&\overset{\eqref{first-simple-fact}}{=}
\frac{1}{r+k}\left( 1 -U \left( \triv_n \cdot D \left( \frac{1}{U(\triv_{n})+r+k}\right) \right) \right)\\
&\overset{\eqref{Frob-recip-fact}}{=}\frac{1}{r+k}\left( 1 - \frac{U(\triv_n)}{U(\triv_{n})+r+k} \right) \\
&=\frac{1}{r+k}\left( \frac{r+k}{U(\triv_{n})+r+k}\right) \\
&=\frac{1}{U(\triv_{n})+r+k}
\end{aligned}
$$
finishing the induction.
\end{proof}

We need one further general fact about $r$-differential posets that does not
involve groups.

\begin{proposition}
\label{prop:join-irreducible-chains}
Let $P$ be an $r$-differential poset $P$, and $j_{m-1} \lessdot j_m$ any
covering pair in $P$ with the property that 
$j_{m-1}, j_m$ both cover at most one element of $P$.

Then for any integer $n \geq m$ one can extend this to a saturated chain 
$$
j_{m-1} \lessdot j_m \lessdot j_{m+1} \lessdot \cdots \lessdot j_{n-1} \lessdot j_n
$$
in which each $j_\ell$ covers at most one element of $P$.
\end{proposition}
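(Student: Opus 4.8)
The plan is to induct on $n-m$, reducing everything to a single-step extension. It suffices to show: if $j_m$ has rank $m\ge 1$ and covers exactly one element $j_{m-1}$ --- and note that ``covers at most one element'' forces ``exactly one'' here, as $P$ is graded with a minimum and so every element of positive rank covers at least one element --- then there is an upper cover $j_{m+1}\gtrdot j_m$ that covers $j_m$ and nothing else. Granting this, the pair $j_m\lessdot j_{m+1}$ again meets the hypotheses (both cover exactly one element), so iterating the single step builds the required saturated chain $j_{m-1}\lessdot j_m\lessdot\cdots\lessdot j_n$.

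For the single step, by (D1) the element $j_m$ has exactly $1+r$ upper covers. I would record two consequences of (D2). First, for any rank-$m$ element $z\ne j_m$ with $z\gtrdot j_{m-1}$: the only common lower cover of $z$ and $j_m$ is $j_{m-1}$ (since $j_m$ covers nothing but $j_{m-1}$), so by (D2) the elements $z$ and $j_m$ have exactly one common upper cover. Write $Z$ for the set of all such $z$; by (D1) applied to $j_{m-1}$, which covers at most one element, $j_{m-1}$ has at most $1+r$ upper covers, and removing $j_m$ gives $|Z|\le r$. Second, conversely, if an upper cover $y$ of $j_m$ also covers some rank-$m$ element $z\ne j_m$, then $z$ and $j_m$ share the upper cover $y$, so by (D2) they share a lower cover, which must be $j_{m-1}$; hence $z\gtrdot j_{m-1}$, i.e. $z\in Z$.

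To conclude, suppose for contradiction that \emph{every} upper cover $y$ of $j_m$ covers some rank-$m$ element other than $j_m$. Double-count the pairs $(y,z)$ with $y\gtrdot j_m$, $z\in Z$, and $z\lessdot y$. Summing over $z\in Z$ first, the first consequence of (D2) says each $z$ lies below exactly one upper cover of $j_m$, so there are exactly $|Z|\le r$ such pairs. Summing over the $1+r$ upper covers $y$ of $j_m$ first, the assumption together with the second consequence of (D2) says each $y$ contributes at least one pair, so there are at least $1+r$ such pairs. Thus $1+r\le r$, a contradiction, and some upper cover $j_{m+1}$ of $j_m$ must cover only $j_m$. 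The only delicate point is this final count; in particular, it is the hypothesis on $j_{m-1}$ --- not merely the one on $j_m$ --- that keeps $|Z|$ below the $1+r$ furnished by (D1), and this is exactly where (D1) and (D2) get played against each other.
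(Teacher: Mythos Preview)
Your proof is correct and follows essentially the same approach as the paper's: both reduce to a single-step extension, use (D2) to show that any ``extra'' lower cover of an upper cover of $j_m$ must lie above $j_{m-1}$, and then derive a contradiction from (D1) bounding the number of upper covers of $j_{m-1}$. Your double-counting of incidences $(y,z)$ is a slightly cleaner repackaging of the paper's pigeonhole argument (which instead picks one $a_i^-$ per upper cover $a_i$, forces a coincidence $a_i^-=a_j^-$, and then reapplies (D2) to contradict $a_i\ne a_j$), but the underlying idea is identical.
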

\begin{proof}
Induct on $n$, with base case $n=m$ trivial.
Since $j_n$ covers exactly one element, it is covered by exactly $r+1$ distinct
elements, say $a_1,a_2,\ldots,a_{r+1}$.  We wish to show that some $a_i$ covers
only $j_n$, and hence plays the role of $j_{n+1}$ in extending our chain.

So assume not, that is, there exists elements $a_1^-,\ldots,a_{r+1}^-$ all
distinct from $j_n$ with the property that $a_i$ covers $a_i^-$ for each $i$.
For each $i$, since $a_i^-$ and $j_n$ are covered by at least one element in
common, namely $a_i$, they must both cover some element in common, which must
be $j_{n-1}$.  Thus $j_n,a_1^-,\ldots,a_{r+1}^-$ all cover $j_{n-1}$, and our
inductive hypothesis says that $j_{n-1}$ covers at most one element, so it
is covered by at most $r+1$ elements.  This forces $a_i^-=a_j^-$ for some $i \neq j$.
But then since $j_n, a_i^- (=a_j^-)$ cover only one element $j_{n-1}$, they must
be covered by a unique element, forcing $a_i=a_j$, a contradiction.
\end{proof}

\begin{theorem}
\label{theorem:last-smith-entry}
Given an $r$-differential tower of groups, its $r$-differential poset has
the last Smith entry for $A:=DU_n+kI$ equal to $0$ when $A$ is
singular, and given by the formula
$$
s_{p_n} =
\begin{cases}
(n+1)!_{r,k}  & \text{ if }r \geq 2 \\
(n-1)!_{1,k}\cdot (n+1+k)& \text{ if }r =1
\end{cases}
$$
when $A$ is nonsingular.
\end{theorem}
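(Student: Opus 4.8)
Throughout write $A := DU_n+kI$. If $A$ is singular there is nothing to prove, so assume $A$ nonsingular. For $n\geq 1$ equation~\eqref{r-differential-relation} gives $A = UD_n+(r+k)I$, so applying Proposition~\ref{prop:ind-res-eigenvalues} with $G=G_n$, $H=G_{n-1}$ and with its ``$k$'' replaced by $r+k$, part (iii) tells us that $s_{p_n}$ is the smallest positive integer $s$ for which $s\cdot\tfrac1\phi$ lies in $\Class_\ZZ(G_n)$, where $\phi := U(\triv_{n-1})+(r+k)$. Writing $\tfrac1\phi = \sum_{\chi\in\Irr(G_n)}\langle\chi,\tfrac1\phi\rangle\,\chi$, this means $s_{p_n}$ is the least common multiple of the denominators of the rational numbers $\langle\chi,\tfrac1\phi\rangle$. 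By Proposition~\ref{eq:sum},
\[
\frac1\phi \;=\; \sum_{\ell=0}^n \frac{(-1)^\ell}{(\ell+1)!_{r,k}}\,U^\ell(\triv_{n-\ell}),
\qquad\text{hence}\qquad
\left\langle\chi,\tfrac1\phi\right\rangle \;=\; \sum_{\ell=0}^n \frac{(-1)^\ell\,m_\ell(\chi)}{(\ell+1)!_{r,k}},
\]
where $m_\ell(\chi):=\langle\Res^{G_n}_{G_{n-\ell}}\chi,\ \triv_{G_{n-\ell}}\rangle=\langle\chi,U^\ell(\triv_{n-\ell})\rangle\in\ZZ_{\geq 0}$ by Frobenius reciprocity, and $U^\ell(\triv_{n-\ell})=\Ind_{G_{n-\ell}}^{G_n}\triv$ is a genuine character. (The case $n=0$, where $DU_0=[r]$, is immediate; and these formulas are used for those $k$ where the $(\ell+1)!_{r,k}$ in question do not vanish, the remaining nonsingular $k$ being dealt with at the end.)

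\textbf{Upper bound.} When $r\geq 2$, multiplying the displayed expansion of $\tfrac1\phi$ by $(n+1)!_{r,k}$ turns each coefficient into the integer $(-1)^\ell\prod_{i=\ell+2}^{n+1}(ir+k)$, so $(n+1)!_{r,k}\cdot\tfrac1\phi$ is a virtual character and $s_{p_n}\mid (n+1)!_{r,k}$. When $r=1$ one first observes $|G_1|=r=1$ (the $n=0$ instance of \eqref{eq:ind:res}), so $G_1=G_0$ and therefore $U^{n-1}(\triv_1)=\Ind_{G_1}^{G_n}\triv=\Ind_{G_0}^{G_n}\triv=U^{n}(\triv_0)$. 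Combining the $\ell=n-1$ and $\ell=n$ terms and using $\tfrac1{n!_{1,k}}-\tfrac1{(n+1)!_{1,k}}=\tfrac1{(n-1)!_{1,k}(n+1+k)}$ rewrites $\tfrac1\phi$ with all denominators dividing $(n-1)!_{1,k}(n+1+k)$, so $s_{p_n}\mid (n-1)!_{1,k}(n+1+k)$.

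\textbf{Lower bound (the heart of the proof).} It suffices to produce one irreducible $\chi$ of $G_n$ whose coordinate $\langle\chi,\tfrac1\phi\rangle$ has denominator equal to the claimed value of $s_{p_n}$. Since branching from $G_1$ to $G_0$ is multiplicity free and $|G_1|=r$, the group $G_1$ is abelian with $r$ linear characters; likewise, when $r=1$ the group $G_2$ is abelian of order $2$. Put $m^*:=1$ if $r\geq 2$ and $m^*:=2$ if $r=1$ (assume $n\geq m^*$; the finitely many smaller $n$ are checked directly), and choose a \emph{nontrivial} linear character $\lambda$ of $G_{m^*}$. Then $\lambda$ covers $\triv_{G_{m^*-1}}$, and both $\lambda$ and $\triv_{G_{m^*-1}}$ cover at most one element, so Proposition~\ref{prop:join-irreducible-chains} extends $\triv_{G_{m^*-1}}\lessdot\lambda$ to a saturated chain $\triv_{G_{m^*-1}}\lessdot\lambda=j_{m^*}\lessdot j_{m^*+1}\lessdot\cdots\lessdot j_n$ in which each $j_\ell$ covers exactly one element. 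Consequently $\Res^{G_\ell}_{G_{\ell-1}}j_\ell=j_{\ell-1}$, each $j_\ell$ with $\ell\geq m^*$ is a linear character, $\Res^{G_n}_{G_m}j_n=j_m$ for $m^*\leq m\leq n$, $\Res^{G_n}_{G_{m^*-1}}j_n=\Res^{G_{m^*}}_{G_{m^*-1}}\lambda=\triv_{G_{m^*-1}}$ (because $\lambda$ covers $\triv_{G_{m^*-1}}$), and, since $\lambda\neq\triv_{G_{m^*}}$, an easy induction gives $j_\ell\neq\triv_{G_\ell}$ for all $\ell\geq m^*$. Taking $\chi:=j_n$, these facts give $m_\ell(\chi)=1$ for $\ell\geq n-m^*+1$ and $m_\ell(\chi)=0$ otherwise, whence
\[
\left\langle j_n,\tfrac1\phi\right\rangle \;=\; \sum_{\ell=n-m^*+1}^{n}\frac{(-1)^\ell}{(\ell+1)!_{r,k}}
\;=\;
\begin{cases}
\dfrac{(-1)^n}{(n+1)!_{r,k}}, & r\geq 2,\\[1.5ex]
\dfrac{(-1)^{n-1}}{(n-1)!_{1,k}(n+1+k)}, & r=1,
\end{cases}
\]
the last equality again from $\tfrac1{n!_{1,k}}-\tfrac1{(n+1)!_{1,k}}=\tfrac1{(n-1)!_{1,k}(n+1+k)}$. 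Either way this fraction is in lowest terms, so $s_{p_n}$ is divisible by its denominator; together with the upper bound this yields the asserted formula.

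\textbf{Loose ends and the main obstacle.} What remains routine: the small cases $n=0$ (all $r$) and $n=1$ (for $r=1$), where $\phi$ is a constant and the claim is a one-line check; the elementary identities among the $(\ell+1)!_{r,k}$; and identifying the boundary values of $k$ at which the claimed formula equals $0$ with the singular case (so that Proposition~\ref{eq:sum}, used above only when the relevant denominators are nonzero, suffices). The substantive input beyond the expansion of $1/\phi$ is Proposition~\ref{prop:join-irreducible-chains}: the naive candidate $\chi=\triv_{G_n}$ only yields $\sum_{\ell=0}^n(-1)^\ell/(\ell+1)!_{r,k}$, whose denominator is in general a \emph{proper} divisor of $(n+1)!_{r,k}$, so routing a thin chain through a nontrivial linear character of $G_1$ (resp. $G_2$) is exactly what forces the full denominator. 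I expect the one place genuinely requiring care to be the bookkeeping in this last step — verifying that precisely the $j_\ell$ with $\ell<m^*$ are trivial and that the resulting index range for the nonzero $m_\ell(\chi)$ is as stated.
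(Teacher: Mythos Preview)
Your proof is correct and follows essentially the same route as the paper's: both reduce via Proposition~\ref{prop:ind-res-eigenvalues}(iii) to finding the minimal $s$ clearing $1/\phi$, expand $1/\phi$ by Proposition~\ref{eq:sum}, obtain the upper bound by clearing denominators (merging the last two terms when $r=1$ via $G_0=G_1$), and obtain the lower bound by constructing the witness character $j_n$ through Proposition~\ref{prop:join-irreducible-chains}. Your version is slightly more explicit in computing $\langle j_n,1/\phi\rangle$ exactly and observing the numerator is $\pm 1$, whereas the paper phrases the lower bound as ``$\chi$ lies outside the irreducible support of $\Ind_{G_\ell}^{G_n}\triv_\ell$ for the relevant $\ell$''; these are the same argument.
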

\begin{proof}
By Proposition~\ref{prop:ind-res-eigenvalues}(iii),
it suffices to show that when $DU_n+kI$ is nonsingular,
the integers shown on the right side of the theorem are the smallest
positive integers $s$ for which the class function
$\frac{s}{\phi}$, where $\phi(g)=\Ind_{G_{n-1}}^{G_n}(\triv_{n-1}) + k$
is a virtual character of $G_n$.  

By Proposition~\ref{eq:sum}, this is same as
saying that
\[
s\cdot\sum_{j=0}^n(-1)^j\frac{\Ind_{G_{n-j}}^{G_n}\triv_{n-j}}{(j+1)!_{r,k}}
\]
is a virtual $G_n$-character. Because $j!_{r,k}$ divides $(j+1)!_{r,k}$, it is clear that
$(n+1)!_{r,k}$ is one such positive integer $s$, for any $r$.  Thus $s_{p_n}$ will
divide $(n+1)!_{r,k}$.  Also, since when
$r=1$ one has $G_0=G_1=1$, and since
$$
\frac{1}{n!_{1,k}} -
\frac{1}{(n+1)!_{1,k}}
=\frac{1}{(n-1)!_{1,k} (n+1+k)},
$$
one finds that $(n-1)!_{1,k}(n+1)_{1,k}$ is one such positive integer $s$ for $r=1$.
Thus $s_{p_n}$ will divide $(n-1)!_{1,k}(n+1+k)$ when $r=1$.

We wish to show that these divisibilities are actually equalities.  First
note every irreducible $G_n$-character appears in the irreducible decomposition
of $\Ind_{G_0}^{G_n} \triv_0$ since $G_0=1$ is the trivial subgroup of $G_n$.
Likewise for $\Ind_{G_1}^{G_n} \triv_1$ when $r=1$ since $G_1=G_0=1$ when $r=1$.
Hence, it suffices to show that 
there is at least one irreducible $G_n$-character $\chi$
which does not appear in the irreducible decomposition of 
$\Ind_{G_\ell}^{G_n} \triv_\ell$ for $\ell \geq 1$ if $r \geq 2$, and 
for $\ell \geq 2$ if $r=1$.  

This is provided by Proposition~\ref{prop:join-irreducible-chains}:
find a $G_n$-irreducible $j_n:=\chi$ lying atop a saturated chain 
$j_0 \lessdot j_1 \lessdot \cdots \lessdot j_n$,
each of whose elements covers a unique element of $P$, and which starts 
\begin{enumerate}
\item[$\bullet$]
for $r \geq 2$, in ranks $0,1$ with 
$j_0=\triv_0 \lessdot j_1=\psi$ for some nontrivial $G_1$-irreducible $\psi$,
\item[$\bullet$]
for $r =1$, in ranks $0,1,2$ with 
$j_0=\triv_0 \lessdot j_1=\triv_1 \lessdot j_1=\psi$
for the unique nontrivial $G_2$-irreducible $\psi$.
\end{enumerate}
In both cases, one can see that this character $\chi$ 
will have the desired property of not appearing in the appropriate
inductions $\Ind_{G_\ell}^{G_n} \triv_\ell$.
\end{proof}

\begin{corollary}
\label{cor:Stanley-and-last-smith-in-towers}
Given an $r$-differential tower of groups, 
\begin{enumerate}
\item[(i)]
the associated $r$-differential poset 
has affirmative answer to Stanley's Question~\ref{conj:Stanley}, that is,
its rank numbers $p_n$ satisfy $\Delta p_n > 0$ for $n \geq 2$, and
\item[(ii)]
for all integers $k$ the last Smith invariant $s_{p_n}$ of
$DU_n+kI$ obeys the prediction of Conjecture~\ref{conj:parametrized-Smith}:
it is zero when $DU_n+kI$ is singular, and equal to 
the product of the distinct eigenvalues of $DU_n+kI$ when it is nonsingular.
\end{enumerate}
In particular, these assertions apply to the $r$-differential Cartesian products
$\Y^r$ of Young's lattice.
\end{corollary}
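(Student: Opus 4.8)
The plan is to assemble Theorem~\ref{theorem:last-smith-entry} and Proposition~\ref{eig} with one genuinely new input: a conjugacy-class reading of the first differences $\Delta p_n$ in the group setting.

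I would begin with part (i). In an $r$-differential tower of groups the down map $D_n$ is the restriction $\Res^{G_n}_{G_{n-1}}$ (the transpose of the induction up-map $U_{n-1}=\Ind_{G_{n-1}}^{G_n}$), so after extending scalars to $\CC$ the kernel of $D_n$ is the space of class functions on $G_n$ vanishing on $G_{n-1}$; its dimension is the number of $G_n$-conjugacy classes disjoint from $G_{n-1}$, and, since $D_n$ is onto by Proposition~\ref{eig}, it equals $\Delta p_n$. Evaluating the defining relation \eqref{eq:ind:res} on the trivial character and comparing degrees yields $[G_{n+1}:G_n]=[G_n:G_{n-1}]+r$ for $n\geq 1$, hence $[G_n:G_{n-1}]\geq 1+(n-1)r\geq 2$ for $n\geq 2$, so $G_{n-1}$ is a proper subgroup of $G_n$. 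Since the union of the conjugates of a subgroup of index $d\geq 2$ has at most $|G_n|-d+1<|G_n|$ elements, some element of $G_n$, and thus a whole $G_n$-conjugacy class, lies outside $G_{n-1}$; therefore $\Delta p_n\geq 1$ for $n\geq 2$, which is the affirmative answer to Question~\ref{conj:Stanley}.

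Next I would deduce part (ii). When $DU_n+kI$ is singular the vanishing of $s_{p_n}$ is already part of Theorem~\ref{theorem:last-smith-entry}, so suppose $DU_n+kI$ is nonsingular. By Proposition~\ref{eig} its eigenvalues are the numbers $r(i+1)+k$ for $0\leq i\leq n$, with multiplicity $\Delta p_{n-i}$, and these are pairwise distinct since $r>0$; so the product of its distinct eigenvalues is $\prod_{i:\ \Delta p_{n-i}>0}(r(i+1)+k)$. Now $\Delta p_0=1$ and $\Delta p_1=r-1$, while $\Delta p_m>0$ for $m\geq 2$ by part (i). Hence for $r\geq 2$ every $\Delta p_{n-i}$ with $0\leq i\leq n$ is positive and the product is $\prod_{\ell=1}^{n+1}(r\ell+k)=(n+1)!_{r,k}$; for $r=1$ the only vanishing difference is $\Delta p_1$, so the eigenvalue $rn+k$ drops out and the product is $(n+1+k)\prod_{\ell=1}^{n-1}(\ell+k)=(n-1)!_{1,k}\,(n+1+k)$. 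In both cases this matches the value of $s_{p_n}$ given by Theorem~\ref{theorem:last-smith-entry}, proving (ii). Finally, $\Y^r$ is the $r$-differential poset attached to the tower of wreath products $G\wr\symm_n$ with $G$ abelian of order $r$ (the Example above), so both assertions specialize to it.

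The only step requiring an idea is part (i): the inequality $\Delta p_n>0$ is open for arbitrary $r$-differential posets, so the group structure must genuinely be used, and the crux is precisely the translation of $\Delta p_n$ into a count of $G_n$-conjugacy classes missing $G_{n-1}$, combined with the elementary fact that a finite group is not covered by the conjugates of a proper subgroup. Part (ii) is then routine bookkeeping comparing the closed form of Theorem~\ref{theorem:last-smith-entry} with the spectrum from Proposition~\ref{eig}.
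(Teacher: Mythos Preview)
Your proof is correct, and your argument for part (i) is genuinely different from the paper's.

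For part (i), the paper proceeds indirectly through the Smith-form machinery: it invokes Theorem~\ref{theorem:last-smith-entry} for a value of $k$ chosen so that $r+k$ is a prime not dividing any of $2r+k,\ldots,(n+1)r+k$, observes that this prime divides $s_{p_n}$ and hence $\det(DU_n+kI)$, and concludes from the factorization in Proposition~\ref{eig} that the exponent $\Delta p_n$ must be positive. Your route is more direct and purely group-theoretic: you identify $\Delta p_n$ with the number of $G_n$-conjugacy classes disjoint from $G_{n-1}$ (via the kernel of restriction on class functions), establish that $G_{n-1}$ is proper in $G_n$ from the index recursion, and then invoke the classical fact that a finite group is never the union of the conjugates of a proper subgroup. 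Your approach is more elementary---it does not need Theorem~\ref{theorem:last-smith-entry} or any Smith-form input at all---and it gives a cleaner conceptual explanation of why the group structure forces $\Delta p_n>0$. The paper's approach, on the other hand, illustrates that the Smith-form result already encodes this positivity, which is thematically satisfying in context. For part (ii) your bookkeeping is essentially the same as the paper's: both compare the closed form of Theorem~\ref{theorem:last-smith-entry} against the eigenvalue list from Proposition~\ref{eig}, using part (i) to determine which eigenvalues actually occur.
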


\begin{proof}
For a fixed $n \geq 2$, to show that $\Delta p_n > 0$, 
apply Theorem~\ref{theorem:last-smith-entry} with the integer $k$
chosen so that $r+k$ is a prime that divides {\it none}
of 
$$
2r+k, \, 3r+k, \, \ldots, \, (n+1)r+k
$$
(e.g. pick $p$ to be a
prime larger than $(n+1)r$ and take $k=p-r$).
Theorem~\ref{theorem:last-smith-entry} tells us that the prime $r+k$ divides the last 
Smith form entry $s_{p_n}$ for $DU_n+kI$ over $\ZZ$,
and hence it also divides 
$$
\det(DU_n+kI) = 
(r+k)^{\Delta p_n} (2r+k)^{\Delta p_{n-1}} \cdots (nr+k)^{\Delta p_1} ((n+1)r+k)^{p_0}
$$
according to Proposition~\ref{eig}.
Since $r+k$ is a prime that can only divide the first factor on the right, it must be
that $\Delta p_n > 0$.

The last assertion of the corollary is now an immediate consequence of Theorem~\ref{theorem:last-smith-entry},
Proposition~\ref{eig}, and Proposition~\ref{prop:smith-eigenvalue-relation}
\end{proof}

\begin{remark}
\label{full-Okada-remark}

When $G$ is a {\it nonabelian} finite group, the up and down maps for the towers of
wreath products $G_n = \symm_n[G]=G \wr \symm_n$
were also studied by Okada \cite{Okada}.  In this situation,
one can see that the branchings are not multiplicity-free,
already from $G_1=G$ down to the trivial group $G_0$.
Hence one does not have an $r$-differential tower of
groups as we have defined it, and our proof of
Corollary~\ref{cor:Stanley-and-last-smith-in-towers} 
does not quite apply.  However, its conclusions for the up and
down maps are still valid, as we now explain.

In this situation, $\Irr(G_n)$ can be naturally indexed by
the elements at rank $n$ in $\Y^m$ where $m=|\Irr(G)|$ is the number
of complex irreducible $G$-characters, or the number of 
conjugacy classes in $G$.  Okada shows \cite[Corollary to Proposition 5.1]{Okada}
that one still has $DU_n-UD_n = rI$ in general.

Thus by placing the branching multiplicities on the edges of $\Y^m$,
one obtains the structure of a {\it self-dual graded graph} in the
sense of Fomin \cite{Fomin1, Fomin2}.  Our results such as
Proposition~\ref{eig}, \ref{prop:ind-res-eigenvalues} still apply,
with the same proofs.  However, in the proof of Theorem~\ref{theorem:last-smith-entry}
one needs a substitute for Proposition~\ref{prop:join-irreducible-chains},
which we currently don't know how to generalize to all self-dual graded graphs.

Nevertheless, for the wreath products $G_n$, one can easily produce
such a substitute as follows.  Take any nontrivial $G$-irreducible 
$\psi \neq \triv_G$ acting on a vector space $V$. Let
$\chi$ be the representation of $G_n$ acting
on the $n$-fold tensor product $V\otimes \cdots \otimes V$, in which
the the subgroup $\symm_n$ of $G_n$ acts by permuting tensor positions
and the Cartesian product subgroup $G^n$ acts as by $\psi$ within
each component.  This representation $\chi$ turns out to be a $G_n$-irreducible,
and its restriction to $G_1(=G)$ is a multiple of $\psi$.
Hence by Frobenius reciprocity, $\chi$ 
is not contained in the irreducible support of 
$\Ind_{G_1}^{G_n} \triv_{G_{1}}$, as desired.
\end{remark}

\section{Remarks on dual graded graphs}
\label{sec:dual-graded-graphs}

Our conjectures deal only with differential posets, not dual graded graphs,
because it has not been clear to us what the general setting should be.  

In particular, Remark~\ref{full-Okada-remark} perhaps suggests that
one should consider as an intermediate level of generality
the {\it self-dual graded graphs}.  We have not tested this extensively.

When considering dual graded graphs which are not self-dual, 
one finds that the naive generalizations of
Conjectures~\ref{conj:parametrized-Smith}, \ref{conj:numerical} and
\ref{conj:free-cokernel} fail; we present specific counterexamples
below.  However, we also present some examples
in which Conjecture~\ref{conj:free-cokernel} holds for at least
one of the two graphs in a pair of dual graded graphs.

Conjecture~\ref{conj:free-cokernel} fails already for the up and down maps in
the left graph out of the pair of dual graded graphs pictured in Figure~\ref{counter:U}.
\begin{figure}[hbtp]
{
\includegraphics{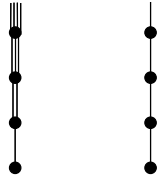}
}
\caption[]{Counterexample for Conjecture~\ref{conj:free-cokernel}}\label{counter:U}
\end{figure}

Conjecture~\ref{conj:numerical} fails for the
dual graded graphs $\SY$ of {\it shifted shapes} considered
by Fomin \cite{Fomin1, Fomin2}:  they have rank size $p_n$ equal to the number
of partitions of $n$ into distinct parts, for which $\Delta p_4=0 < 1=\Delta p_3$.

The dual graphs of shifted shapes also 
provide a counterexample to Conjecture~\ref{conj:parametrized-Smith},
when one considers $DU_7$.
\begin{figure}[hbtp]
{
\includegraphics{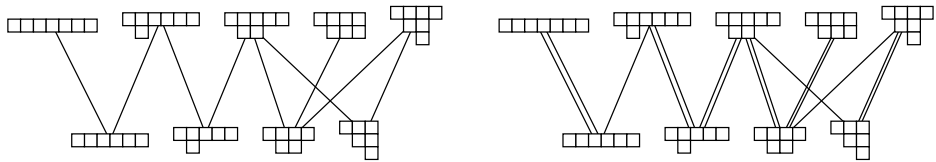}
}
\caption[]{Ranks $6$ and $7$ of $\SY$.}\label{counter:DU}
\end{figure}

One finds that
\[
DU_7=\begin{bmatrix} 4&2&2&0&0\\
1&3&2&0&0\\
1&2&5&2&0\\
0&0&2&4&2\\
0&0&0&1&3
\end{bmatrix}\sim \begin{bmatrix}1& & & & \\
 &1& & & \\
 & &1& & \\
 & & &2& \\
 & & & &120
\end{bmatrix},
\]
while its eigenvalues $1,2,3,5,8$ would have predicted the diagonal
Smith entries as $1,1,1,1,240.$

On the other hand, we list in the next proposition a number of known graded graphs, each
a part of a pair of dual graded graphs, having $\coker\ U_n$ free for all $n$; see
Nzeutchap \cite{Nz,Nz2} for definitions and background on these objects.

\begin{proposition}  
In the following graded graphs, $\coker\ U_n$ is free for all $n$:
\begin{enumerate}[(1)]
\item  the lifted binary tree,
\item  the bracket tree,
\item  Binword,
\item  the lattice of binary trees, and
\item  the lattice of shifted shapes from the pair $\SY$ with all edges labelled one.
\end{enumerate}
\end{proposition}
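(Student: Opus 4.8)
The plan is to treat each of the five graded graphs separately, but in each case using the same underlying principle already exploited in the paper: to show $\coker(U_n)$ is free it suffices, by Proposition~\ref{prop:free-cokernel-characterization} (equivalently, by Proposition~\ref{prop:det-divisors}, showing the $\gcd$ of maximal minors of $U_n$ is $1$), to exhibit an ordering of the vertices in rank $n$ and rank $n+1$ for which the $p_n\times p_n$ matrix of $U_n$ has a maximal minor equal to $\pm 1$ — ideally a submatrix that is (uni)triangular after reordering, or literally an identity block, exactly as in the proof of Proposition~\ref{prop:wagner-respects-conjectures}(ii) and in Proposition~\ref{prop:U:Y} with the $\{h_\lambda\}$ basis. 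So the uniform first step is: for each graph, identify a natural injection from rank-$n$ vertices to rank-$(n+1)$ vertices which is ``canonical'' in the sense of being realized by a single distinguished edge, and check that the resulting square submatrix of $U_n$ is unitriangular with respect to a compatible linear order.

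Carrying this out graph by graph: for the lifted binary tree and the bracket tree, each vertex $w$ at rank $n$ should have a canonically chosen child $\hat w$ at rank $n+1$ (e.g. the leftmost child, or the child obtained by a fixed structural operation), and one orders the vertices so that $w \mapsto \hat w$ is order-preserving and any other edge from $w$ goes to a vertex ordered strictly later (or earlier); then the submatrix indexed by $\{(w,\hat w)\}$ is unitriangular, hence has determinant $1$. For Binword, whose rank-$n$ vertices are binary words of length $n$, the natural map is prepending (or appending) a fixed letter, say $w\mapsto 0w$, which is injective, and one must check the edges of Binword are compatible with a length-lexicographic order so the relevant submatrix is triangular. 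For the lattice of binary trees and for the lattice of shifted shapes $\SY$ with all edge labels one, one again wants a single distinguished covering from each rank-$n$ element — for shifted shapes, adding a box to a fixed corner (say extending the first row), which injects shifted partitions of $n$ into those of $n+1$ — and then orders shifted shapes so that this gives a unitriangular block in $U_n$. In each case, once the distinguished injection and the compatible order are written down, the triangularity is a routine verification from the explicit combinatorial description of the covering relations in the cited references (Nzeutchap \cite{Nz,Nz2}).

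The main obstacle is not any single hard lemma but the bookkeeping: for each of the five graphs one must (a) recall the precise vertex sets and edge rules from \cite{Nz,Nz2}, (b) pin down a distinguished child map that is genuinely injective on each rank, and (c) find a total order on each rank making the distinguished-child submatrix triangular while controlling where the non-distinguished edges land. The subtlest case is likely the lattice of binary trees, where ``adding a node'' does not have a unique canonical choice and one may instead need to use a Tamari-type or a left-comb insertion and argue triangularity with respect to an order refining the lattice order; and the shifted-shapes case requires care because $\SY$ already furnished the counterexample to Conjecture~\ref{conj:parametrized-Smith}, so one must be sure that freeness of $\coker(U_n)$ — a strictly weaker statement — is not spoiled by the same phenomenon (it is not, because the counterexample concerned $DU_7$, not $U_n$ itself, and the distinguished-box injection still yields a unitriangular block). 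Once these per-graph orderings are fixed, each verification reduces to the observation that a distinguished edge always ``dominates'' the others in the chosen order, so the square submatrix is unitriangular and $\coker(U_n)$ is free.
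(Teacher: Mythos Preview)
Your approach is essentially the same as the paper's: exhibit, for each graph, an injection $\phi:P_n\hookrightarrow P_{n+1}$ along edges together with a linear order on each rank so that the $p_n\times p_n$ submatrix of $U_n$ indexed by $(\phi(x),x)$ is unitriangular. Two small points where the paper differs from your sketch are worth noting. First, for items (1) and (2) the paper simply observes that the lifted binary tree and the bracket tree are \emph{trees}; in a tree every rank-$n$ vertex has a unique rank-$(n-1)$ element below it, so $U_n$ literally contains a permutation matrix as a maximal submatrix and there is nothing further to arrange. Second, your proposed Binword injection $w\mapsto 0w$ will not work as stated, since Binword words begin with $1$; the paper instead inserts a $0$ in the second position, $\phi(w_1w_2\cdots w_n)=w_1 0 w_2\cdots w_n$, and verifies this is earliest in the integer order among covers of $w$. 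These are cosmetic repairs to an otherwise correct plan.
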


\begin{proof}  
For (1) and (2) this is clear because the graded graphs in question are trees.  It therefore remains to check
this for (3),(4),(5).  For each of these posets $P$, one can exhibit
\begin{enumerate}
\item[$\bullet$]
a natural way to linearly order each rank $P_n$, and 
\item[$\bullet$]
a ($P$-)order-preserving injection $P_n \overset{\phi}{\hookrightarrow} P_{n+1}$, so that
\item[$\bullet$]
$\phi(x)$ is always the earliest element in the linear order on $P_{n+1}$
which lies above $x$ in $P$.
\end{enumerate}
From this it follows that $U_n$ has free cokernel, because the sets $P_n$ and $\phi(P_n)$
will index the columns and rows of a $p_n \times p_n$ unitriangular submatrix.

For (3), let $w=w_1w_2\cdots w_n$ be a binary word of rank $n$ in Binword,
and linearly order them by the usual integer order on the numbers they represent in binary.
Then let $\phi(w)$ be obtained from $w$ by inserting a $0$ into the second position: 
\[\phi(w)=w_1 0 w_2\cdots w_n.\]  

For (4), linearly order each rank in the lattice of binary trees by saying
say $T\geq T'$ if the leftmost vertex in which they differ is in $T$.  For example, one has
\begin{center}
{
\psfrag{l}{$\geq$}
\includegraphics{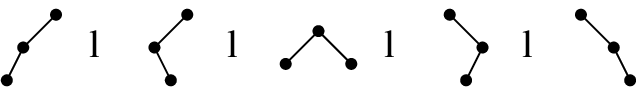}
}
\end{center}
Given a tree $T$, let $\phi(T)$ be obtained by adding a left leaf in the leftmost available position.

For the lattice of shifted shapes, the linear order on each rank
is the natural lexicographic ordering.  For example, 
\begin{center}
{
\psfrag{g}{$\geq$}
\includegraphics{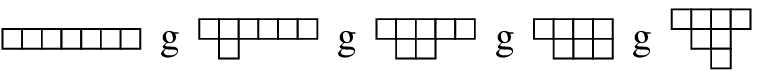}
}
\end{center}
Given a shifted shape $\lambda$ of rank $n$, 
let $\phi(\lambda)$ be obtained from it by removing the bottom, rightmost box.  
\end{proof}

\begin{question}
How should the conjectures in this paper generalize to dual graded graphs?  
\end{question}

We have noticed that in a large number of examples, the
torsion part of the group $\coker(U_n)$ is killed by
some power of the product of all edge-multiplicities
for the edges between ranks $n$ and $n+1$.
%
However, this also does not hold in general:
the following partial dual graded graphs
\begin{center}
\includegraphics{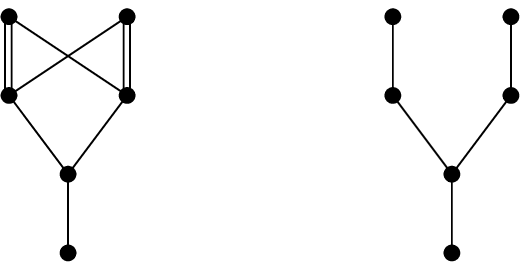}
\end{center}
have $\coker\ U_2\cong \ZZ/3\ZZ$, which is not killed by
any power of $2$ (the only nontrivial edge multiplicity between ranks $2$ and $3$).
One can extend this partial pair to a genuine pair of dual graded graphs by a
fairly obvious dual graded graph analogue of Wagner's construction.

\section{Appendix: On Smith and parametrized Smith forms}
\label{sec:Smith-significance} 

  This appendix discusses Smith forms over rings $R$ which are unique factorization
domains (UFD's), but not necessarily principal ideal domains (PID's).  See also
Kuperberg \cite{Kuperberg} for other useful facts about this, and 
interesting examples where such Smith forms exist.  
Also discussed here is the significance of a square matrix having
a {\it parametrized Smith form}, as defined in the Introduction.

\subsection{Smith forms}

Let $R$ be a commutative ring.  Recall that a matrix $A$ in $R^{m \times n}$
{\it is in Smith form} if it is diagonal in the sense that its off-diagonal entries
vanish, and its diagonal entries $s_i=a_{ii}$ have $s_{i+1}$ divisible by $s_i$ 
in $R$ for each $i=1,2,\ldots,\min\{m,n\}-1$.  Recall also that $A$ {\it has a Smith form over $R$}
if there exists change-of-bases $P, Q$ in $GL_m(R), GL_n(R)$, respectively, such that
$PAQ$ is in Smith form.

It is well-known that when $R$ is a PID, every matrix $A$ in $R^{m \times n}$ has
a Smith form, but this is not guaranteed for an arbitrary ring $R$.
The following proposition characterizing the diagonal entries in the Smith form
is well-known in the
case where $R$ is a PID, but holds more generally when $R$ is a UFD.

\begin{proposition}
\label{prop:det-divisors}
Let $R$ be a UFD, and assume that $A$ in $R^{m \times n}$ has a Smith form over $R$, with
diagonal entries $s_1,s_2,\ldots$.  Then for each $i$, the partial product
$s_1 s_2 \cdots s_i$ can be interpreted as the greatest common divisor $d_i(A)$
of all determinants of $i \times i$ minors of $A$.

Consequently, the entries $s_i$ are determined uniquely up to
units in $R^\times$ by the formula
\begin{equation}
\label{eqn:det-divisors}
s_i=\frac{d_i(A)}{d_{i-1}(A)},
\end{equation}
where $d_0(A)$ is taken to be $1$.
\end{proposition}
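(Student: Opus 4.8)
The plan is to reduce everything to the classical PID statement by working locally at each prime, and to verify that the Smith form over $R$ is compatible with this localization. First I would observe that since $A$ has a Smith form over $R$, say $PAQ=\mathrm{diag}(s_1,s_2,\ldots)$ with $s_i\mid s_{i+1}$, the Cauchy--Binet formula shows that $d_i(PAQ)=d_i(A)$ up to a unit (because $P,Q$ are invertible over $R$, the gcd of $i\times i$ minors is preserved). So it suffices to compute $d_i$ for a matrix already in Smith form, where the $i\times i$ minors are exactly the products $s_{j_1}s_{j_2}\cdots s_{j_i}$ over index sets $j_1<\cdots<j_i$ (all off-diagonal minors vanish). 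The divisibility chain $s_1\mid s_2\mid\cdots$ then forces $s_1s_2\cdots s_i$ to divide every such product, and it is itself one of them, so $d_i = s_1\cdots s_i$ up to a unit. That is the whole content once one has Cauchy--Binet and the UFD gcd.

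The one genuinely UFD-specific point I would spell out is that gcd's exist and behave well: in a UFD, any finite set of nonzero elements has a gcd (well-defined up to units), obtained by taking the minimum exponent of each prime; and if $x\mid y_\alpha$ for all $\alpha$ then $x\mid \gcd(y_\alpha)$, while conversely $\gcd(y_\alpha)\mid y_\beta$ for each $\beta$. I would also handle the zero entries: if some $s_k=0$ then all $s_j=0$ for $j\ge k$ by the divisibility condition, and for $i\ge k$ every $i\times i$ minor of the diagonal matrix vanishes, so $d_i(A)=0=s_1\cdots s_i$, consistent with the formula (and $d_0:=1$). The passage from the statement about $s_1\cdots s_i$ to the formula $s_i=d_i(A)/d_{i-1}(A)$ is then immediate: in the region where $d_{i-1}\ne 0$ one divides, and uniqueness up to units follows since gcd's are unique up to units.

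The only step requiring care — and the main (mild) obstacle — is making sure Cauchy--Binet is invoked correctly over a commutative ring that need not be a PID: one needs that for $P\in GL_m(R)$, $A\in R^{m\times n}$, the gcd of the $i\times i$ minors of $PA$ equals that of $A$. This follows because each $i\times i$ minor of $PA$ is an $R$-linear combination of $i\times i$ minors of $A$ (Cauchy--Binet, valid over any commutative ring), so $d_i(A)\mid d_i(PA)$; applying the same to $P^{-1}(PA)=A$ gives the reverse divisibility, hence equality up to units. The symmetric argument handles right multiplication by $Q$. With that in hand the proposition is a short and purely formal deduction; no deeper ring theory is needed beyond unique factorization.
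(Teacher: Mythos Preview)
Your argument is correct and follows essentially the same route as the paper: show that $d_i$ is invariant under left and right multiplication by invertible matrices, then compute $d_i$ directly for a matrix already in Smith form using the divisibility chain. The only cosmetic difference is that the paper phrases the invariance step via exterior powers (reducing $d_i(A)$ to $d_1(\wedge^i A)$ and using functoriality of $\wedge^i$), whereas you invoke Cauchy--Binet directly; these are the same computation in different clothing. One small remark: your opening sentence promises a localization-at-primes argument that you never actually carry out---you may want to drop that line, since the Cauchy--Binet argument you give is global and self-contained.
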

\begin{proof}
Since greatest common divisors in a UFD are well-defined up to units in $R^\times$,
the second assertion about unique determination follows once
one has shown the formula \eqref{eqn:det-divisors}

To show formula \eqref{eqn:det-divisors}, first note that it is trivial when $A$ itself is in
Smith form, due to the divisibility relations among the diagonal
entries $s_i$.

Secondly, note that $d_1(PAQ) = d_1(AQ) = d_1(A)$ since
invertible row or column operations over $R$ cannot affect the set of
common divisors of the entries of a matrix.  
Since we assumed that one can find $P,Q$ with
$PAQ$ in Smith form, the assertion holds for $i=1$.

For $i \geq 2$, recall that $A$ represents an 
$R$-module homomorphism between free $R$-modules
$$
A: R^n \rightarrow R^m.
$$
Apply the $i^{th}$ exterior power functor to obtain a matrix
$\wedge^i A$ in $R^{\binom{m}{i}} \times R^{\binom{n}{i}}$,
representing the $R$-module homomorphism between the free $R$-modules
$$
\wedge^i A: \wedge^i R^n \rightarrow \wedge^i R^m,
$$
with respect to the standard basis of decomposable wedges 
$e_I:=\wedge_{j \in I} e_i$ indexed by $i$-element subsets $I$ of 
$\{1,2,\ldots,n\}$ or $\{1,2,\ldots,m\}$ for the source and target.
Since the $(I,J)$ entry of $\wedge^i A$ is the determinant of the
$i \times i$ minor $A_{I,J}$, one concludes that $d_i(A)=d_1(\wedge^i A)$.
As functoriality implies that
$$
\wedge_i(PAQ) = \wedge^i P \cdot \wedge^i A \cdot \wedge^i Q,
$$
and also implies the invertibility of $\wedge^i P, \wedge^i Q$,
one concludes that
$$
d_i(PAQ) = d_1(\wedge^i(PAQ)) = d_1(\wedge^i P \cdot \wedge^i A \cdot \wedge^i Q) 
= d_1(\wedge^i A)=d_i(A).
$$
Since it was assumed that one can find $P,Q$ with
$PAQ$ in Smith form, the assertion holds for all $i$.
\end{proof}

The following corollary characterizing the last Smith form entry
is used in Section~\ref{tower-section}.

\begin{corollary}\label{cor:last-smith}
Let $R$ be a UFD, with field of fractions $K$.  Assume that a 
square matrix $A$ in $R^{n \times n}$ has a Smith form over $R$.
Then the last diagonal entry $s_n$ in its Smith form is
\begin{enumerate}
\item[$\bullet$] zero if and only if $A$ is singular, and
\item[$\bullet$] the smallest denominator for $A^{-1}$ in $K^{n \times n}$,
in the sense that any $s$ in $R$ for which $sA^{-1}$ lies in $R^{n \times n}$
will have $s_n$ dividing $s$.
\end{enumerate}
\end{corollary}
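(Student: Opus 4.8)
The plan is to work directly with a Smith decomposition $PAQ = S$, where $S = \diag(s_1,\ldots,s_n)$ and $P \in GL_n(R)$, $Q \in GL_n(R)$, and to exploit the divisibility chain $s_1 \mid s_2 \mid \cdots \mid s_n$, which in particular forces $s_i \mid s_n$ for every $i$.

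First I would dispose of the singular case. Since $P$ and $Q$ are invertible over $R$, one has $\det A = u\, s_1 s_2 \cdots s_n$ for some unit $u \in R^\times$, so $A$ is singular if and only if some $s_i = 0$; but $s_i \mid s_{i+1}$ means that once one $s_i$ vanishes so do all the later ones, so this is equivalent to $s_n = 0$. (Alternatively, apply Proposition~\ref{prop:det-divisors} with $i = n$ to identify $s_1 \cdots s_n$ with $d_n(A) = \det A$ up to a unit; the same proposition also supplies the uniqueness of $s_n$ up to units that makes ``the last diagonal entry'' well-defined.)

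Now assume $A$ is nonsingular, so every $s_i \neq 0$ and $A^{-1} = Q S^{-1} P$ in $K^{n\times n}$ with $S^{-1} = \diag(s_1^{-1},\ldots,s_n^{-1})$. To see $s_n$ is a denominator, I would note that $s_n S^{-1} = \diag(s_n/s_1,\ldots,s_n/s_{n-1},1)$ has all entries in $R$ because $s_i \mid s_n$, hence $s_n A^{-1} = Q (s_n S^{-1}) P \in R^{n\times n}$. Conversely, given any $s \in R$ with $s A^{-1} \in R^{n\times n}$, multiply on the left by $Q^{-1}$ and on the right by $P^{-1}$, both in $GL_n(R)$, to obtain $Q^{-1}(s A^{-1}) P^{-1} = s S^{-1} \in R^{n\times n}$; reading off the $(n,n)$ entry gives $s/s_n \in R$, that is $s_n \mid s$. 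This is essentially bookkeeping once the decomposition is available, so I do not anticipate a real obstacle; the only point needing a moment's care is that the divisibility chain both propagates zeros to the tail (in the singular case) and yields $s_i \mid s_n$ for all $i$ (in the denominator computation).
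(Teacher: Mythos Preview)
Your proof is correct, but it takes a genuinely different route from the paper for the second assertion. The paper argues via the adjugate: writing $(A^{-1})_{ij} = (-1)^{i+j}\det(A^{ij})/\det A$, it reduces the condition $sA^{-1}\in R^{n\times n}$ to $\det A \mid s\cdot\gcd_{i,j}\det(A^{ij}) = s\,d_{n-1}(A)$, and then invokes Proposition~\ref{prop:det-divisors} to identify $d_{n-1}(A)$ with $s_1\cdots s_{n-1}$ and $\det A$ with $s_1\cdots s_n$, yielding $s_n\mid s$. You instead work directly with the decomposition $A^{-1}=QS^{-1}P$ and read off the $(n,n)$ entry of $sS^{-1}$, using only the divisibility chain $s_i\mid s_n$. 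Your argument is more elementary and self-contained (it does not need the determinantal-divisor description of the Smith entries), and it also makes explicit that $s_n$ itself is a denominator, which the paper's chain of equivalences gives only implicitly. The paper's approach, on the other hand, ties the result back into the general $d_i(A)$ machinery, which is thematically consistent with the rest of the appendix.
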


\begin{proof}
For the first assertion, note that since $A$ is square and has a Smith form $PAQ$
with diagonal entries $s_1,s_2,\ldots,s_n$, one has 
$$
s_1 \cdots s_n = \det PAQ = \det P \cdot \det A \cdot \det Q.
$$
Note that $\det P,\det Q$ lie in $R^\times$ since $P, Q$ invertible.
Hence $A$ is singular if and only if $\det A=0$ if and only if
$s_1 \cdots s_n=0$ if and only if some $s_i=0$ if and only if $s_n=0$.

For the second assertion, recall that
\[
(A^{-1})_{ij}=\frac{(-1)^{i+j}}{\det A} \det(A^{ij})
\]
where $A^{ij}$ is the $(n-1) \times (n-1)$ minor of $A$ obtained
by removing row $i$ and column $j$.
Thus one has
$$
\begin{aligned}
sA^{-1}\in R^{n\times n}
&\Leftrightarrow  s\frac{\det A^{ij}}{\det A} \in R \text{ for all }i,j\\
&\Leftrightarrow  \det A \mid s \det A^{ij} \text{ for all }i,j\\
&\Leftrightarrow \det A\mid \gcd_{i,j}(s \det A^{ij}) =s \gcd_{i,j}(\det A^{ij})\\
&\Leftrightarrow  s_1 s_2 \cdots s_{n-1} s_n = \det A \mid s d_{n-1}(A) = s \cdot s_1 s_2 \cdots s_{n-1} \\
&\Leftrightarrow  s_n \mid s,
\end{aligned}
$$
where Proposition~\ref{prop:det-divisors}
was used in the second-to-last equivalence.
\end{proof}

Assume $R$ is an integral domain with fraction field $K$, and
let $A$ in $R^{m \times n}$ have rank $r$ when considered as a matrix
in $K^{m \times n}$.  If $A$ also has a Smith form over $R$, then in its
diagonal entries, the first $r$ of them  $s_1,s_2,\ldots,s_r$ will be nonzero,
and the rest zero.   Note that the simplest possible Smith form
in this situation, where $s_1,\ldots,s_n \in \{0,1\}$, occurs if and only if the cokernel
$$
\coker A:= R^m / \im ( A : R^n \rightarrow R^m )
$$
is free, and isomorphic to $R^{m-r}$.  The following
equivalent characterization when rank $r=n$ is useful in the
context of Conjecture~\ref{conj:free-cokernel}.

\begin{proposition}
\label{prop:free-cokernel-characterization}
Let $R$ be an integral domain.
Assume $U$ in $R^{m \times n}$ has rank $n$ over
the fraction field $K$ of $R$, so in particular, $n \leq m$.
Then $\coker(U) \cong R^{m-n}$ if and only if $\coker(U U^t) \cong R^{m-n}$.
\end{proposition}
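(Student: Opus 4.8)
The plan is to reduce both implications to the single assertion that the submodules $\im(U)$ and $\im(UU^t)$ of $R^m$ actually \emph{coincide}, so that $\coker(U)$ and $\coker(UU^t)$ become literally the same quotient of $R^m$ and the equivalence is immediate. Two facts will be in force throughout. First, one always has $\im(UU^t)\subseteq\im(U)$, since $UU^tx=U(U^tx)$. Second, both submodules have rank $n$ over $K$: the hypothesis $\rank_K(U)=n$ makes $U\colon K^n\to K^m$ injective and $U^t\colon K^m\to K^n$ surjective, so over $K$ one gets $\im(UU^t)=U(\im U^t)=U(K^n)=\im(U)$, a space of dimension $n$. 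Also $U$ is already injective over $R$ (its $R$-kernel sits inside its $K$-kernel, which is zero), so $\im(U)\cong R^n$.

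For the ``only if'' direction I would argue as follows. Assume $\coker(U)\cong\ZZ^{m-n}$ (more precisely $R^{m-n}$). The exact sequence $0\to R^n\xrightarrow{U}R^m\to\coker(U)\to 0$ splits, because $\coker(U)$ is free hence projective; choosing a splitting and composing the resulting retraction $R^m\twoheadrightarrow\im(U)$ with the inverse of $U\colon R^n\xrightarrow{\sim}\im(U)$ produces $V$ in $R^{n\times m}$ with $VU=I_n$. Transposing gives $U^tV^t=I_n$, so $U^t\colon R^m\to R^n$ is surjective already over $R$. Hence $\im(UU^t)=U(\im U^t)=U(R^n)=\im(U)$, and therefore $\coker(UU^t)=R^m/\im(UU^t)=R^m/\im(U)=\coker(U)\cong R^{m-n}$.

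For the ``if'' direction, assume $\coker(UU^t)\cong R^{m-n}$. Apply the second isomorphism theorem to the chain $\im(UU^t)\subseteq\im(U)\subseteq R^m$ to get an exact sequence $0\to\im(U)/\im(UU^t)\to R^m/\im(UU^t)\to R^m/\im(U)\to 0$. The middle term is $\coker(UU^t)\cong R^{m-n}$, which is torsion-free, so its submodule $\im(U)/\im(UU^t)$ is torsion-free too; but $\im(U)$ and $\im(UU^t)$ both have rank $n$ over $K$ by the opening remarks, so $\im(U)/\im(UU^t)$ has rank $0$. A torsion-free module of rank $0$ over the domain $R$ is zero, so $\im(U)=\im(UU^t)$ and again $\coker(U)=R^m/\im(U)=R^m/\im(UU^t)=\coker(UU^t)\cong R^{m-n}$.

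There is no deep obstacle here; the point at which care is needed is simply not to reach for Smith normal forms, which need not exist over a general integral domain. The real content is the observation that the two hypotheses each force the same equality $\im(U)=\im(UU^t)$, but by different mechanisms: freeness of $\coker(U)$ gives a one-sided inverse making $U^t$ surjective, whereas freeness of $\coker(UU^t)$ rules out torsion and thereby collapses the quotient $\im(U)/\im(UU^t)$.
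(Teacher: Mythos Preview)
Your proof is correct, and your reverse implication is essentially the same as the paper's: both observe that the kernel of the natural surjection $\coker(UU^t)\twoheadrightarrow\coker(U)$, namely $\im(U)/\im(UU^t)$, has $K$-rank zero yet sits inside the free module $\coker(UU^t)$, hence vanishes.

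The forward implication is organized a bit differently. The paper, after splitting the sequence, works with explicit matrices: it produces a unimodular $\hat P$ with $\hat P U=\left[\begin{smallmatrix}I_n\\0\end{smallmatrix}\right]$ and then computes $\hat P\,UU^t\,\hat P^t=\left[\begin{smallmatrix}I_n&0\\0&0\end{smallmatrix}\right]$, exhibiting a Smith form for $UU^t$ directly. You instead extract from the splitting a left inverse $V$ with $VU=I_n$, transpose to make $U^t$ surjective over $R$, and conclude $\im(UU^t)=U(R^n)=\im(U)$. Your route is a little more conceptual and ties both directions to the single equality $\im(U)=\im(UU^t)$; the paper's route has the advantage of actually displaying the Smith form of $UU^t$, which fits the paper's broader theme. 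Both are short and rest on the same splitting.
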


\begin{proof}
For the forward implication, if $\coker(U) \cong R^{m-n}$ then the
short exact sequence
$$
0 \rightarrow \im U \rightarrow R^m \rightarrow \coker(U) \rightarrow 0
$$
will split, since $\coker(U)$ is free, and hence projective.  Therefore
$$
R^m = \im(U) \oplus \coker(U)
$$
which means that there exist change-of-bases 
$P, Q$ in $GL_m(R), GL_n(R)$ so that 
$
PUQ=
\left[
\begin{matrix}
I_n \\
0
\end{matrix}
\right].
$
This gives 
$$
PU=
\left[
\begin{matrix}
I_n \\
0
\end{matrix}
\right] Q^{-1}
\quad = \quad
\left[
\begin{matrix}
Q^{-1} \\
0
\end{matrix}
\right] 
\quad = \quad
\left[
\begin{matrix}
Q^{-1} & 0\\
0      & I_{m-n}
\end{matrix}
\right] 
\left[
\begin{matrix}
I_n \\
0
\end{matrix}
\right].
$$
Therefore
$
\hat{P} U =
\left[
\begin{matrix}
I_n \\
0
\end{matrix}
\right]  
$
where
$
\hat{P}:=
\left[
\begin{matrix}
Q       & 0\\
0      & I_{m-n}
\end{matrix}
\right] P
$
is unimodular.
This implies 
$$
\hat P U U^t \hat{P}^t =
\left[
\begin{matrix}
I_n \\
0
\end{matrix}
\right]
\left[
\begin{matrix}
I_n & 0
\end{matrix}
\right]
=
\left[
\begin{matrix}
I_n  & 0\\
0 & 0 
\end{matrix}
\right].
$$
Hence $\coker(U U^t)$ is also free.

For the reverse implication, assume $\coker(U U^t) \cong R^{m-n}$.
This implies that $\rank_K U U^t = n$.  Note that $\im(U U^t) \subseteq \im U$,
which has two implications.  Firstly, it implies 
$$
(n=)\rank_K U U^t \leq \rank_K U
$$ 
and since $U$ has dimensions $m \times n$, this
forces 
$$
\rank_K U=n=\rank_K U U^t.
$$

Secondly, it gives a surjection
$$
\coker(U U^t) \overset{\pi}{\twoheadrightarrow} \coker(U)
$$
between two $R$-modules of the same rank.  Since this implies that
$\ker \pi$ has $K$-dimension $0$, it must be all $R$-torsion as an $R$-module, and hence zero
because it is a submodule of the free module $\coker(U U^t) \cong R^{m-n}$.
Thus $\pi$ is an isomorphism, and $\coker(U) \cong R^{m-n}$.
\end{proof}

\subsection{Parametrized Smith forms}

Recall that a square matrix $A$ in $R^{n \times n}$ is said to have a parametrized
Smith form if $A+tI$ in $R[t]^{n \times n}$ has a Smith form over $R[t]$.
This has strong consequences.

\begin{proposition}
\label{prop:smith-eigenvalue-relation}
Let $R$ be an integral domain, with field of fractions $K$, and
$\Kbar$ the algebraic closure of $K$.

Assume $A$ in $R^{n \times n}$ has a parametrized
Smith form for $A+tI$ with the diagonal entries $s_1(t),\ldots,s_n(t)$ in $R[t]$.

\begin{enumerate}
\item[(i)]
The $s_i(t)$ can all chosen with leading coefficient $1$ (monic) in $R[t]$.
\item[(ii)]
For each $\lambda$ in $\Kbar$, the (geometric) eigenvalue multiplicity
$$
\dim_\Kbar \ker(A-\lambda I)=m
$$ 
can be read off of the $s_i(t)$ as follows: $t+\lambda$ is a
factor in $\Kbar[t]$ of the last $m$ entries 
$$
s_n(t),s_{n-1}(t),\ldots,s_{n-(m-1)(t)}
$$
but no other $s_i(t)$.
\item[(iii)]
$A$ is semisimple (=diagonalizable in $\Kbar^{n \times n}$) if and only
the $s_i(t)$ have no repeated factors when split competely in $\Kbar[t]$.
\item[(iv)] 
In the situation where $A$ is semisimple, the $s_i(t)$ factor 
as follows:
$$
s_{n+1-i}(t) = \prod_{\substack{\lambda \in \Kbar: 
                 \\ \dim_\Kbar \ker ( A -\lambda I ) \geq i }} ( t + \lambda).
$$
So, for example, the last parametrized Smith entry factors as
$$
s_n(t) = \prod_{\text{ eigenvalues } \lambda \in \Kbar \text{ for } A } (t+\lambda).
$$
\end{enumerate}
\end{proposition}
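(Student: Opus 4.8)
The plan is to extract all four parts from two simple observations about a pair of matrices $P(t)\in GL_n(R[t])$, $Q(t)\in GL_n(R[t])$ that brings $A+tI$ into Smith form. The key point is that since $R$ is a domain one has $R[t]^\times=R^\times$, so $\det P(t)$ and $\det Q(t)$ already lie in $R^\times$. First, this means that for every $\lambda\in\Kbar$ the specializations $P(-\lambda)$ and $Q(-\lambda)$ lie in $GL_n(\Kbar)$, so that $P(-\lambda)(A-\lambda I)Q(-\lambda)=\diag(s_1(-\lambda),\ldots,s_n(-\lambda))$. Second, reading the same Smith identity over the PID $\Kbar[t]$ exhibits $s_1(t)\mid\cdots\mid s_n(t)$ as the invariant factors of $A+tI$ over $\Kbar[t]$. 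I would record these and then read off the four parts.

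For (i), taking determinants in $P(t)(A+tI)Q(t)=\diag(s_i(t))$ --- equivalently, computing $d_n(A+tI)$ two ways via Proposition~\ref{prop:det-divisors} --- yields $\prod_i s_i(t)=u\cdot\det(A+tI)$ with $u\in R^\times$. Since $\det(A+tI)$ is a nonzero monic polynomial of degree $n$, each $s_i(t)$ is a nonzero polynomial and the product of their leading coefficients equals $u$; as a divisor of a unit is a unit, each $s_i(t)$ has unit leading coefficient. Rescaling each $s_i(t)$ by the inverse of its leading coefficient --- absorbing these scalars into $P(t)$ --- makes all the $s_i(t)$ monic while, being multiplication by units, preserving the chain $s_1(t)\mid\cdots\mid s_n(t)$.

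For (ii), I would specialize at $t=-\lambda$. By the first observation, $\rank_\Kbar(A-\lambda I)$ equals the number of indices $i$ with $s_i(-\lambda)\neq 0$, and $s_i(-\lambda)=0$ is equivalent to $(t+\lambda)\mid s_i(t)$ in $\Kbar[t]$. Hence $m=\dim_\Kbar\ker(A-\lambda I)$ counts the $s_i(t)$ divisible by $t+\lambda$, and by the divisibility chain $s_1(t)\mid\cdots\mid s_n(t)$ those must be exactly the last $m$ entries $s_n(t),s_{n-1}(t),\ldots,s_{n-(m-1)}(t)$.

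For (iii), the plan is a degree count against the total geometric multiplicity, and this is the step I expect to require the most care. By part (i) all $s_i(t)$ are nonzero, so $\sum_i\deg s_i(t)=\deg\det(A+tI)=n$; letting $v_\lambda(s_i)$ denote the multiplicity of $t+\lambda$ in $s_i(t)$ over $\Kbar[t]$, this reads $\sum_\lambda\sum_i v_\lambda(s_i)=n$. On the other hand, by part (ii), $\sum_\lambda\dim_\Kbar\ker(A-\lambda I)=\sum_\lambda\#\{i:v_\lambda(s_i)\geq 1\}=\sum_\lambda\sum_i\min(v_\lambda(s_i),1)\leq n$, with equality if and only if every $v_\lambda(s_i)$ is $0$ or $1$ --- that is, if and only if no $s_i(t)$ has a repeated factor in $\Kbar[t]$. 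Since $A$ is diagonalizable over $\Kbar$ exactly when its geometric multiplicities sum to $n$, this gives (iii). Finally (iv) falls out: when $A$ is semisimple each $s_i(t)$ is squarefree by (iii), so $s_{n+1-i}(t)$ --- which by (ii) is divisible by $t+\lambda$ precisely when $\dim_\Kbar\ker(A-\lambda I)\geq i$ --- equals the product of exactly those linear factors $t+\lambda$, each occurring once. Apart from the equality-in-an-inequality step in (iii), everything is formal manipulation of the specialization $t\mapsto-\lambda$ together with the divisibility chain, so that is where I would concentrate the write-up.
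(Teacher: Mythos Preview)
Your proposal is correct and follows essentially the same approach as the paper's proof: both hinge on the fact that $R[t]^\times=R^\times$ forces $\det P(t),\det Q(t)\in R^\times$, allowing specialization at $t=-\lambda$ to remain a $GL_n(\Kbar)$-equivalence, and then read off ranks and multiplicities. The only cosmetic difference is in (iii), where the paper compares geometric and algebraic multiplicities eigenvalue by eigenvalue while you aggregate into a single degree-count inequality $\sum_\lambda\sum_i\min(v_\lambda(s_i),1)\leq\sum_\lambda\sum_i v_\lambda(s_i)=n$; these are trivially equivalent formulations of the same idea.
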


\begin{proof}
  By assumption, there exist $P(t), Q(t)$ lying in $GL_n(R[t])$ for which
\begin{equation}
\label{eqn:parametrized-smith}
P(t) (A+tI) Q(t) = \diag( s_1(t),\ldots,s_n(t) )
\end{equation}
where here $\diag(a_1,\ldots,a_n)$ denotes the $n \times n$ square diagonal matrix
with eigenvalues $a_1,\ldots,a_n$.
Since $R$ is an integral domain, the units of $R[t]$ are exactly the
units $R^\times$, that is, independent of $t$, and hence 
$\det P(t), \det Q(t)$ both lie in $R^\times$.  
Thus one has
\begin{equation}
\label{det-relation}
\det P(t) \det(A+tI) \det Q(t) = s_1(t) \cdots s_n(t)
\end{equation}
where $\det P(t), \det Q(t)$ lie in $R^\times$.

Assertion (i) now follows:  up to scalars in $R^\times$,
the leading terms of the $s_i(t)$ multiply to give
the leading term of $\det(A+tI)$, which is $1$.  Hence these leading terms are
all in $R^\times$, and can be scaled away.

For assertion (ii),
note that for any 
$\lambda$ in $\Kbar$, one can set $t=-\lambda$ in \eqref{eqn:parametrized-smith} 
to obtain
\begin{equation}
\label{eqn:specialized-parameter}
P(-\lambda) (A-\lambda I) Q(-\lambda) = \diag( s_1(-\lambda),\ldots,s_n(-\lambda) )
\end{equation}
where both $P(-\lambda), Q(-\lambda)$ lie in $GL_n(\Kbar)$ since their determinants
are independent of $\lambda$ and lie in $R^\times (\subset K^\times \subset \Kbar^\times$).
Consequently, one has
$$
m=\dim_\Kbar \ker ( A -\lambda I )=\dim_\Kbar \ker \diag( s_1(-\lambda),\ldots,s_n(-\lambda) )
$$
if and only if $t=-\lambda$ is a root of exactly $m$ of the polynomials $s_i(t)$,
that is, if and only if $t+\lambda$ is a factor in $\Kbar[t]$ of 
$s_n(t),s_{n-1}(t),\ldots,s_{n-(m-1)}(t)$ and no others.

For assertions (iii) and (iv),
recall that one has $A$ semisimple if and only
if for each $\lambda$ the (geometric) multiplicity
$m=\dim_\Kbar \ker ( A -\lambda I )$
is the same as the multiplicity of $t=-\lambda$ as a root of
$\det(A+tI)$.  We claim that the latter is the same as its
multiplicity as a root of $s_1(t) \cdots s_n(t)$, due to \eqref{det-relation}.
Since we know $t=-\lambda$ is always a root of the last $m$ factors
in this product and no others, it is a root of multiplicity $m$
if and only if it is a simple root for each of these last $m$ factors.
This gives all the remaining assertions. 
\end{proof}

We illustrate, by way of examples,
that having $A$ in $R^{n \times n}$ with a parametrized Smith form
for $A+tI$ in $R[t]$ implies neither that $A$ is semisimple, nor that its
eigenvalues lie in $R$ or $K$.

\begin{example}
Let $R=\ZZ$ and 
$$
A=
\left[
\begin{matrix}
0 & 1 \\
0 & 0
\end{matrix}
\right]
$$
which is not semisimple.
Nevertheless, one can bring $A+tI$ into Smith form over $\ZZ[t]$ by these row and column operations:
$$
A+tI =
\left[
\begin{matrix}
t & 1 \\
0 & t
\end{matrix}
\right]
\sim
\left[
\begin{matrix}
0 & 1 \\
-t^2 & t
\end{matrix}
\right]
\sim
\left[
\begin{matrix}
0 & 1 \\
-t^2 & 0
\end{matrix}
\right]
\sim
\left[
\begin{matrix}
1 & 0 \\
0 & t^2
\end{matrix}
\right].
$$
Note that one of the diagonal entries $s_2(t)=t^2$ has a double root in
this case, as predicted by Proposition~\ref{prop:smith-eigenvalue-relation} above.
\end{example}

\begin{example}
Let $R=K=\QQ$ and 
$$
A=
\left[
\begin{matrix}
0 & 1 \\
-1 & 0
\end{matrix}
\right]
$$
which is semisimple, but with eigenvalues $\pm i$ that do not lie in $R=K$.
However, since $R$ is a field and hence $R[t]$ is a PID, one can certainly bring
$A+tI$ into Smith form over $\QQ[t]$ by these row and column operations:
$$
A+tI=
\left[
\begin{matrix}
t & 1 \\
-1 & t
\end{matrix}
\right]
\sim
\left[
\begin{matrix}
0 & t^2+1 \\
-1 & t
\end{matrix}
\right]
\sim
\left[
\begin{matrix}
0 & t^2+1 \\
-1 & 0
\end{matrix}
\right]
\sim
\left[
\begin{matrix}
1 & 0 \\
0 & t^2+1
\end{matrix}
\right].
$$
Note that each of the eigenvalues is at most a simple root of
the last Smith entry $s_2(t)=t^2+1=(t+i)(t-i)$.
\end{example}

\begin{remark}
\label{fraction-field-and-PID-consequences}
  Note that if $R$ is itself a $PID$, then for every value $k$ in $R$, the
matrix $A+kI$ in $R^{n \times n}$ will have a Smith form over $R$.  Thus whenever
$A+tI$ also has a Smith form in $R[t]$, it will predict these
Smith forms over $R$ by setting $t=k$.  

  Note also that for any integral domain $R$, since its field of fractions $K$ always has
$K[t]$ a PID, a Smith form for $A+tI$ in $K[t]$ is guaranteed to exist.  Thus whenever
$A+tI$ also has a Smith form in $R[t]$, it will predict the Smith form in $K[t]$.
However, knowledge of the eigenvalues of $A$ in $\Kbar$ will already predict this Smith form 
for $A+tI$ in $K[t]$, by the argument in Proposition~\ref{prop:smith-eigenvalue-relation}.
\end{remark}

\begin{remark}
  To give some sense of how rarely one finds a parametrized Smith form for $A+tI$ 
when $R$ is not a field and $A$ lies in $R^{n \times n}$, we note its scarcity among
among {\it diagonal matrices}.

\begin{proposition}
\label{prop:diagonal-scarcity}
Let $A=\diag(\lambda_1,\ldots,\lambda_n)$ be a diagonal matrix in $R^{n \times n}$.
If $A+tI$ has a Smith form in $R[t]$ then every nonzero difference $\lambda_i - \lambda_j$
is a unit in $R^\times$.

In particular, for $R=\ZZ$, a diagonal matrix $A$ has a parametrized Smith form
for $A+tI$ in $\ZZ[t]$ if and only if the diagonal entries take on at most two values in
$\ZZ$, and those two values differ by $1$.
\end{proposition}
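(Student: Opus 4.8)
The plan is to establish the nontrivial direction of the first assertion — that if $A+tI$ has a Smith form over $R[t]$ then every nonzero $\lambda_i-\lambda_j$ lies in $R^{\times}$ — and then to read off both the converse and the $R=\ZZ$ refinement from a Chinese Remainder computation. Throughout $R$ is a UFD (as in this appendix), and I abbreviate $M:=\coker(A+tI)\cong\bigoplus_{i=1}^{n}R[t]/(t+\lambda_i)$.

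\textbf{Reduction to one residue class over a local ring.} Suppose $A+tI$ has a Smith form over $R[t]$ but, for contradiction, $\lambda_a\neq\lambda_b$ with $\lambda_a-\lambda_b\notin R^{\times}$. Choose a maximal ideal $\mathfrak m$ of $R$ containing $\lambda_a-\lambda_b$ and base change to the local UFD $S:=R_{\mathfrak m}$, over which $\diag(t+\lambda_1,\dots,t+\lambda_n)$ still has a Smith form $\diag(f_1,\dots,f_n)$ with $f_1\mid\cdots\mid f_n$. Partition $\{1,\dots,n\}$ into classes according to the residue of $\lambda_i$ in $k:=S/\mathfrak mS$; the class $C$ containing $a$ also contains $b$, so it has at least two elements and at least two distinct values among $\{\lambda_i:i\in C\}$, all pairwise congruent to one $\bar\lambda\in k$. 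The point of localness is that for $i\in C$ and $j\notin C$ one has $\lambda_i-\lambda_j\notin\mathfrak mS$, hence $\lambda_i-\lambda_j\in S^{\times}$, so the element $g:=\prod_{i\in C}(t+\lambda_i)$ of $S[t]$ annihilates $M_C:=\bigoplus_{i\in C}S[t]/(t+\lambda_i)$ while acting invertibly on the complementary summand of $M$; thus $M_C=\ker(g\colon M\to M)$. Taking $g$-kernels termwise in $M\cong\bigoplus_k S[t]/(f_k)$, and using that $S[t]$ is a GCD domain, yields $M_C\cong\bigoplus_k S[t]/(\gcd(f_k,g))$ with $\gcd(f_1,g)\mid\gcd(f_2,g)\mid\cdots$; in particular the $p\times p$ matrix $\diag(t+\lambda_i)_{i\in C}$ (with $p:=|C|$), which presents $M_C$, again has a Smith form over $S[t]$.

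\textbf{The contradiction.} Let $\diag(h_1,\dots,h_p)$ be this Smith form, with $h_1\mid\cdots\mid h_p$. Any nonunit $h_l$ must be nonconstant: otherwise $S[t]/(h_l)$ would be an $h_l$-torsion direct summand of $M_C$, contradicting that $M_C$ is torsion-free (indeed free of rank $p$) over $S$; so the degree count $\sum_l\deg h_l=p$ forces either some $h_l$ to be a unit or every $h_l$ to have degree $1$. In the latter case the divisibility chain and monicity make all the $h_l$ equal, so $t$ would act as a scalar on $M_C$, which is impossible since it acts as $\diag(-\lambda_i)_{i\in C}$ with the $\lambda_i$ not all equal. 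Hence some $h_l$ is a unit, so $M_C$ is presented by fewer than $p$ generators, and therefore its $(p-1)$st Fitting ideal (an invariant of $M_C$, independent of the presentation) is the unit ideal; but that Fitting ideal, computed from the size-$p$ presentation $\diag(t+\lambda_i)_{i\in C}$, equals the ideal generated by the entries, namely $(t+\lambda_i:i\in C)$. Thus $1\in(t+\lambda_i:i\in C)$, and reducing modulo $\mathfrak mS$ — where every $t+\lambda_i$ with $i\in C$ becomes $t+\bar\lambda$ — gives $1\in(t+\bar\lambda)\subsetneq k[t]$, a contradiction. This proves the first assertion.

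\textbf{Converse, the $R=\ZZ$ statement, and the main obstacle.} Suppose now every nonzero $\lambda_i-\lambda_j$ is a unit, with distinct values $\mu_1,\dots,\mu_d$. Then the ideals $(t+\mu_l)$ are pairwise comaximal in $R[t]$, since $(t+\mu_l,t+\mu_{l'})=(t+\mu_l,\mu_l-\mu_{l'})=R[t]$, and this comaximality is exactly what makes the classical $2\times 2$ Smith reduction available for each pair of diagonal entries as one runs the usual algorithm on $\diag(t+\lambda_1,\dots,t+\lambda_n)$ (a step replaces a pair $t+\mu_l,t+\mu_{l'}$ by $1$ and $(t+\mu_l)(t+\mu_{l'})$ and preserves the property that all entries are products of distinct $(t+\mu)$'s, so the Bézout identities needed at the next step persist), bringing $A+tI$ to a Smith form over $R[t]$. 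For $R=\ZZ$ one has $R^{\times}=\{\pm1\}$, so ``all nonzero $\lambda_i-\lambda_j$ are units'' together with $d\geq2$ means exactly two distinct values differing by $1$, which is the claimed characterization. The crucial and least automatic step is the reduction in the second paragraph: over $R$ itself the ideal $(t+\lambda_1,\dots,t+\lambda_n)$ can already be all of $R[t]$ when two of the $\lambda_i$ coincide (e.g.\ $R=\ZZ$ with values $0,1,1$), so the degree-and-reduction argument only bites after passing to a local ring and isolating a single residue class; and the delicate point there is confirming that the $g$-torsion piece $M_C$ genuinely inherits the divisibility chain $f_1\mid f_2\mid\cdots$, rather than merely acquiring some presentation.
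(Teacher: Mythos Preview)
Your argument is essentially correct and takes a genuinely different route from the paper's. The paper exploits Proposition~\ref{prop:smith-eigenvalue-relation}(iv) to write down the Smith entries $s_i(t)$ explicitly, then specializes $t=-\lambda$ for each eigenvalue $\lambda$ and runs a short downward induction on multiplicity: the nonzero values $s_i(-\lambda)$ are products of differences $\lambda_j-\lambda$ with $\lambda_j$ of strictly larger multiplicity (hence units by induction), and comparing cokernels forces $R/(\lambda_i-\lambda)=0$ for every $i$. Your proof instead localizes at a maximal ideal containing a bad difference, isolates a single residue class $C$ as the $g$-torsion of $\coker(A+tI)$, and then plays a degree count against a Fitting-ideal computation to reach a contradiction. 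The paper's approach is shorter because it leans on the explicit eigenvalue formula already established in the appendix; yours is more self-contained and module-theoretic, and along the way you supply the converse (via comaximality of the $(t+\mu_l)$), which the paper leaves implicit.

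One imprecision worth flagging: from $M_C\cong\bigoplus_{k=1}^n S[t]/(\gcd(f_k,g))$ you conclude that the $p\times p$ matrix $\diag(t+\lambda_i)_{i\in C}$ itself has a Smith form over $S[t]$. Cokernel isomorphism does not in general imply matrix equivalence over a non-PID, and in any case the cyclic decomposition you obtain has $n$ summands, not $p$. The fix is painless and already implicit in your toolkit: since $\mathrm{Fitt}_p(M_C)=S[t]$ (computed from the $p$-generator presentation), the first $n-p$ of the $\gcd(f_k,g)$ are units, so $M_C\cong\bigoplus_{l=1}^p S[t]/(h_l)$ with $h_l:=\gcd(f_{n-p+l},g)$ and $h_1\mid\cdots\mid h_p$. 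Your subsequent steps use only this module decomposition, the fact that each $h_l$ divides $g$ (hence is a unit or has degree $\geq 1$), and the identity $\mathrm{Fitt}_0(M_C)=(h_1\cdots h_p)=(g)$ for the degree count, so nothing is lost by dropping the phrase ``has a Smith form'' in favor of ``has cokernel admitting a cyclic decomposition with divisibility.''
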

\begin{proof}
Let $\lambda$ be among the diagonal entries $\{\lambda_1,\ldots,\lambda_n\}$.
We will show that $\lambda_i - \lambda$ lies in $R^\times$ for all $i$ using
downward induction on the multiplicity $m$ with which $\lambda$ occurs in the list.
Reindex so that $\lambda$ occurs as the last $m$ diagonal entries in $A$.
Let $P(t) (A+tI) Q(t) = \diag(s_1(t),\ldots,s_n(t))$, where we know from
Proposition~\ref{prop:smith-eigenvalue-relation} how to factor $s_i(t)$ over $R$.
Setting $t=-\lambda$ one obtains
$$
\begin{aligned}
&P(-\lambda) \diag( \lambda_1 - \lambda,\ldots, \lambda_{n-m} - \lambda, 
               \underbrace{0,0,\cdots,0}_{m\text{ times }}) Q(-\lambda)\\
&\quad =
\diag(s_1( - \lambda),\ldots, s_{n-m}( - \lambda), 
               \underbrace{0,0,\cdots,0}_{m\text{ times }}). 
\end{aligned}
$$
However the exact form of each of the nonzero terms $s_i(-\lambda)$ on the
right shows that it is a product of factors of the form $\lambda_j - \lambda$
where $\lambda_j$ has larger multiplicity than $m$.  Thus by induction,
each such factor $\lambda_j - \lambda$ is a unit, and hence so is $s_i(-\lambda)$.

Taking cokernels of both sides now gives an $R$-module isomorphism
$$
R^m \oplus \bigoplus_{i=1}^{n-m} R/(\lambda_i - \lambda) \cong R^m.
$$
This forces each $R/(\lambda_i - \lambda)=0$ (and hence $\lambda_i - \lambda$ is
forced to be a unit): any $x$ in $R/(\lambda_i - \lambda)$ gives rise to an element
$(x,0,0,\ldots)$ killed by $\lambda_i - \lambda$ on the left,
but since only $0$ is killed by it in $R^m$ on the right, this forces $x=0$.

The assertion for $R=\ZZ$ then follows easily from the fact that $\ZZ^\times=\{\pm 1\}$,
and the difference $+1-(-1)=2$ is {\it not} in $\ZZ^\times$.
\end{proof}

\end{remark}

\begin{example}
The following examples illustrate some of the previous remarks.
$$
A=
\left[
\begin{matrix}
-1 & 0 \\
0 & 0 
\end{matrix}
\right]
$$
in $\ZZ^{2 \times 2}$ has a parametrized Smith form, as
$$
A+tI=
\left[
\begin{matrix}
t-1 & 0 \\
0 & t 
\end{matrix}
\right]
\sim
\left[
\begin{matrix}
t-1 & 0 \\
t & t 
\end{matrix}
\right]
\sim
\left[
\begin{matrix}
-1 & -t \\
t & t 
\end{matrix}
\right]
\sim
\left[
\begin{matrix}
-1 & -t \\
 0& t-t^2 
\end{matrix}
\right]
\sim
\left[
\begin{matrix}
 1 & 0 \\
 0& t(t-1) 
\end{matrix}
\right]
$$
Consequently, it has the same Smith form over $\QQ[t]$, and
for any integer $k$, the matrix $A+kI$ will have Smith form over $\ZZ$
given by setting $t=k$.  In particular, the cokernel $\ZZ^2/\im(A+kI)$
is always isomorphic to the cyclic group  $\ZZ/k(k-1)\ZZ$, interpreting this
as $\ZZ/0\ZZ = \ZZ$ when $k = 0,1$.

On the other hand, the matrix
$$
A=
\left[
\begin{matrix}
-1 & 0 \\
0 & 1 
\end{matrix}
\right]
$$
has {\it no} Smith form in $\ZZ[t]$ for 
$$
A+tI=
\left[
\begin{matrix}
t-1 & 0 \\
0 & t+1 
\end{matrix}
\right]
$$
according to Proposition~\ref{prop:diagonal-scarcity}.
The Smith form for $A+tI$ in $\QQ[t]$ has diagonal entries $1, (t-1)(t+1)$, but
requires dividing by $2$ in order to put it into this form.
One also sees a wider range of behaviors for the cokernel of $A+kI$ over $\ZZ$
as the integer $k$ varies:
$$
\coker(A+kI) \cong
\begin{cases}
\ZZ/(k-1)\ZZ \oplus \ZZ/(k+1)\ZZ &\text{ if }k\text{ is odd},\\
\ZZ/(k-1)(k+1)\ZZ & \text{ if }k\text{ is even}.
\end{cases}
$$
\end{example}


\end{document}